\theoremstyle{definition}
\newtheorem {definition} {Definition}[section]
\newtheorem {example}[definition]{Example}
\theoremstyle{theorem}
\newtheorem {lemma} [definition]  {Lemma}
\newtheorem {proposition}  [definition]  {Proposition}
\newtheorem {corollary} [definition] {Corollary}
\theoremstyle{theorem}
\newtheorem{mainthm}{Theorem}[section]
\newtheorem{mainlemma}{Lemma}[section]
\theoremstyle{remark}
\newtheorem {remark}  [definition]  {Remark}
\begin{document}

\title{K-theory of affine actions}
\author{James Waldron}
\date{12/10/2017}

\maketitle

\abstract{
For a Lie group $G$ and a vector bundle $E$ we study those actions of the Lie group $TG$ on $E$ for which the action map $TG\times E \to E$ is a morphism of vector bundles, and call those \emph{affine actions}. We prove that the category $\mathrm{Vect}_{TG}^{\mathrm{aff}}\left(X\right)$ of such actions over a fixed $G$-manifold $X$ is equivalent to a certain slice category $\mathfrak g_X \backslash \mathrm{Vect}_G\left(X\right)$. We show that there is a monadic adjunction relating $\mathrm{Vect}_{TG}^{\mathrm{aff}}\left(X\right)$ to $\mathrm{Vect}_G\left(X\right)$, and the right adjoint of this adjunction induces an isomorphism of Grothendieck groups $K_{TG}^{\mathrm{aff}}\left(X\right) \cong KO_G\left(X\right)$. Complexification produces analogous results involving $T_\mathbb C G$ and $K_G\left(X\right)$.
}

\section{Introduction}

\subsection{Introduction}

Let $G$ be a Lie group. The tangent bundle $TG$ carries two structures: it is a vector bundle over $G$, and a Lie group, with multiplication given by the derivative of the multiplication of $G$. These structures are compatible, in the sense that the multiplication $TG\times TG \to TG$ is a morphism of vector bundles, so that $TG$ is a \emph{group object} in the category of vector bundles. It is therefore natural to study actions of $TG$ on vector bundles, such that the action map $TG \times E \to E$ is a morphism of vector bundles (see Definition \ref{def: affine actions} below). We refer to such actions as \emph{affine actions}, as each element of $TG$ necessarily acts by an affine linear transformation between fibres of $E$ (see Remark \ref{rem: affine} below). A basic example of an affine action is the following:
\begin{example}\textbf{Tangent bundles.}
\label{ex: intro tangent}
For any action $t:G\times X \to X$ of a Lie group $G$ on a smooth manifold $X$, the derivative defines an affine action $t_\ast : TG\times TX \to TX$ of $TG$ on $TX$. Note that restricting the action $t_\ast$ to $G$ defines the natural action $G\times TX \to TX$ of $G$ on $TX$, whilst restricting $t_\ast$ to the Lie algebra $\mathfrak g=T_eG$ allows one to define a map $\mathfrak g \to \Gamma\left(TX\right)$ which is exactly the infinitesimal action associated to $t$. These maps are compatible in the sense that $\mathfrak g \to \Gamma\left(TX\right)$ is $G$-equivariant. 
\end{example}

Example \ref{ex: intro tangent} suggests the question of whether the action $t_\ast :TG\times TX \to TX$, or more generally any affine action $\mu:TG\times E \to E$, can be reconstructed from its restrictions to $G$ and $\mathfrak g$.

One motivation for studying equivariant vector bundles is their use in defining the (real) equivariant K-theory $KO_G\left(X\right)$ of a $G$-manifold $X$ (see \cite{segal}). Recall that (at least if $G$ and $X$ are compact) $KO_G\left(X\right)$ is the Grothendieck group of the commutative monoid of isomorphism classes of $G$-equivariant real vector bundles over $X$.  A natural question to ask is whether one can emulate this construction in the case of affine actions to define an abelian group $K_{TG}^{\mathrm{aff}}\left(X\right)$. If so, how is this group related to $KO_G\left(X\right)$?

A different motivation for studying affine actions comes from the theory of \emph{Lie algebroids}. Recall that a Lie algebroid $A\to X$ is a vector bundle over $X$ equipped with an $\mathbb R$-linear Lie bracket on $\Gamma\left(A\right)$ and a map $A\to TX$, such that a certain Leibniz rule is satisfied. (See \cite{mackenzie} for more details.) There exists a notion of equivariant Lie algebroid (called a \emph{Harish-Chandra Lie algebroid} in \cite{bb}) which involves both a $G$-action $G\times A \to A$ and a linear map $\mathfrak g \to \Gamma\left(A\right)$ satisfying certain conditions. Variants of this notion have appeared in \cite{alekseev},\cite{bruzzo},\cite{marrero},\cite{ginzburg}. It was shown in \cite{marrero} that equivariant Lie algebroids give rise to examples of affine actions (see Example \ref{ex: Liealg} below). One motivation for our results is therefore to generalise the notion of affine action and study this concept at the level of vector bundles.

\subsection{Main results}

Throughout the paper $G$ is a real Lie group and $X$ is a $G$-manifold. See section \ref{sec: prelim} for our notation and conventions and section \ref{def: affine actions} for the definition of affine actions and their morphisms. We use $\mathrm{Vect}_{TG}^{\mathrm{aff}}\left(X\right)$ (respectively $\mathrm{Vect}_G\left(X\right)$) to denote the category of affine actions (respectively the category of real $G$-equivariant vector bundles) over $X$. We use $\mathfrak g_X$ to denote the $G$-equivariant vector bundle associated to the adjoint representation of $G$. See section \ref{subsec: the category} for the definition of the slice category $\mathfrak g_X \backslash \mathrm{Vect}_G\left(X\right)$.

\begingroup
\setcounter{mainthm}{0} 
\renewcommand\themainthm{\Alph{mainthm}}
\begin{mainthm}
\label{thm: A}
The following three categories are isomorphic:
\begin{enumerate}
\item The category $\mathrm{Vect}_{TG}^\mathrm{aff}\left(X\right)$ of affine actions of $TG$ over $X$.
\item The category of pairs $\left(E,\rho_\mathfrak g\right)$, defined as follows: 
\begin{itemize}
\item The objects are pairs $\left(E,\rho_\mathfrak g\right)$, where $E$ is a $G$-equivariant vector bundle over $X$ and $\rho_\mathfrak g : \mathfrak g \to \Gamma\left(E\right)$ is a $G$-equivariant linear map. 
\item The morphisms $\left(E,\rho_\mathfrak g\right) \to \left(E',\rho_\mathfrak g\right)$ are morphisms $\psi:E\to E'$ of $G$-equivariant vector bundles over $X$ such that $\Gamma\left(\psi\right)\circ \rho_\mathfrak g = \rho'_\mathfrak g$.
\item Composition is given by composition of morphisms of vector bundles over $X$.
\end{itemize}
\item The slice category $ \mathfrak g_X \backslash \mathrm{Vect}_G\left(X\right)$.
\end{enumerate}
\end{mainthm}
\endgroup

Our second main result compares affine actions to equivariant vector bundles. There is a canonical forgetful functor 
\[
\mathrm U:\mathrm{Vect}_{TG}^{\mathrm{aff}}\left(X\right) \to \mathrm{Vect}_G\left(X\right)
\] 
Via the isomorphism $\mathrm{Vect}_{TG}^{\mathrm{aff}}\left(X\right) \cong \mathfrak g_X \backslash\mathrm{Vect}_G\left(X\right)$ of Theorem \ref{thm: A}, the functor $\mathrm{U}$ is equal to the canonical forgetful functor 
$\mathfrak g_X \backslash \mathrm{Vect}_G\left(X\right) \to \mathrm{Vect}_G\left(X\right)$
which maps an object $\mathfrak g_X \xrightarrow{\phi} E$ to $E$. We also define a pair of functors $\mathrm F,\sigma:\mathrm{Vect}_G\left(X\right) \to \mathrm{Vect}_{TG}^{\mathrm{aff}}\left(X\right)$.
In terms of $\mathfrak g \backslash \mathrm{Vect}_G\left(X\right)$, they are defined on objects by $\sigma: E \mapsto \left(E,0\right)$ and $\mathrm F: E \mapsto \left(\mathfrak g_X\oplus E,i_{\mathfrak g_X}\right)$. See section \ref{subsec: forget} for the precise definitions.
\begingroup
\setcounter{mainthm}{1} 
\renewcommand\themainthm{\Alph{mainthm}}
\begin{mainthm}
\label{thm: B}
The following statements hold:
\begin{enumerate}
\item $\mathrm F$ is left adjoint to $\mathrm U$.
\item The adjunction $\mathrm F \dashv \mathrm U$  is monadic.
\item $\sigma$ is the unique section of $\mathrm U$.
\end{enumerate}
\end{mainthm}
\endgroup

Our third main result concerns the Grothendieck group of $\mathrm{Vect}_{TG}^{\mathrm{aff}}\left(X\right)$. We denote by 
\[
KO_G\left(-\right) : G\text{-Man} \to \mathrm{Ab}
\]
the functor from the category $G\text{-Man}$ of $G$-manifolds to the category $\mathrm{Ab}$ of abelian groups, which maps a $G$-manifold $X$ to the Grothendieck group of $G$-equivariant real vector bundles over $X$. (This agrees with real $G$-equivariant topological $K$-theory as defined by Segal in \cite{segal} if both $G$ and $X$ are compact.)
Although the category $\mathrm{Vect}_{TG}^{\mathrm{aff}}\left(X\right)$ is not additive we show in section \ref{subsec: products} that it does have finite products. This allows us to define the Grothendieck group $K_{TG}^{\mathrm{aff}}\left(X\right)$ (see section \ref{subsec: Grothendieck} and Definition \ref{def: KTG}). This construction extends to a contravariant functor
\[
K_{TG}^{\mathrm{aff}}\left(-\right) : G\text{-Man} \to \mathrm{Ab}
\]
Our third main result shows that $K_{TG}^{\mathrm{aff}}\left(-\right)$ agrees with $KO_G\left(-\right)$:

\begingroup
\setcounter{mainthm}{2} 
\renewcommand\themainthm{\Alph{mainthm}}
\begin{mainthm}
\label{thm: C}
For $X$ a $G$-manifold the functor $\mathrm U$ induces a group isomorphism
\[
K\left(\mathrm U\right):K_{TG}^{\mathrm{aff}}\left(X\right) \to KO_G\left(X\right)
\]
Its inverse is 
\[
K\left(\sigma\right) : KO_G\left(X\right) \to K_{TG}^{\mathrm{aff}}\left(X\right)
\]
These isomorphisms are natural in $X$, and thus define an isomorphism of functors
\[
K_{TG}^{\mathrm{aff}}\left(-\right) \xrightarrow{\cong} KO_G\left(-\right)
\]
\end{mainthm}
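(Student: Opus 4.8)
The plan is to play the two functors $\mathrm U$ and $\sigma$ off against one another, using that $\sigma$ is a section of $\mathrm U$ together with the product structure on $\mathrm{Vect}_{TG}^{\mathrm{aff}}\left(X\right)$ from section \ref{subsec: products}, and to reduce the whole statement to a single cancellation inside a Grothendieck group. First I would record that both $\mathrm U$ and $\sigma$ preserve finite products, including the empty one: $\mathrm U$ because it is a right adjoint by Theorem \ref{thm: B}, and $\sigma$ directly from $\sigma\left(E\right)=\left(E,0\right)$ together with the description of products in $\mathfrak g_X\backslash\mathrm{Vect}_G\left(X\right)$. Hence each induces a homomorphism of commutative monoids of isomorphism classes under $\times$, and therefore a homomorphism of Grothendieck groups, and $K\left(-\right)$ is functorial in the evident sense; combined with $\mathrm U\circ\sigma=\mathrm{id}$ from Theorem \ref{thm: B} this already gives $K\left(\mathrm U\right)\circ K\left(\sigma\right)=K\left(\mathrm{id}\right)=\mathrm{id}_{KO_G\left(X\right)}$.

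The substance of the theorem is the reverse composite. Because the classes $\left[A\right]$ of objects generate $K_{TG}^{\mathrm{aff}}\left(X\right)$ and $K\left(\sigma\right)\circ K\left(\mathrm U\right)$ is a group homomorphism, it suffices to prove that $\left[A\right]=\left[\sigma\mathrm U\left(A\right)\right]$ in $K_{TG}^{\mathrm{aff}}\left(X\right)$ for every object $A$. Using the isomorphism of Theorem \ref{thm: A} I would represent $A$ by an object $\mathfrak g_X\xrightarrow{\phi}E$ of the slice category, so that $\sigma\mathrm U\left(A\right)$ is represented by $\mathfrak g_X\xrightarrow{0}E$. Taking the auxiliary object to be the free object $\mathrm F\left(0\right)=\bigl(\mathfrak g_X\xrightarrow{\mathrm{id}}\mathfrak g_X\bigr)$, the products $\sigma\mathrm U\left(A\right)\times\mathrm F\left(0\right)$ and $A\times\mathrm F\left(0\right)$ both have underlying $G$-equivariant bundle $E\oplus\mathfrak g_X$, with structure maps $\left(0,\mathrm{id}_{\mathfrak g_X}\right)$ and $\left(\phi,\mathrm{id}_{\mathfrak g_X}\right)$ respectively, and the $G$-equivariant automorphism of $E\oplus\mathfrak g_X$ sending $\left(e,v\right)$ to $\left(e+\phi\left(v\right),v\right)$ intertwines these structure maps, giving an isomorphism $\sigma\mathrm U\left(A\right)\times\mathrm F\left(0\right)\cong A\times\mathrm F\left(0\right)$ in $\mathrm{Vect}_{TG}^{\mathrm{aff}}\left(X\right)$. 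Passing to isomorphism classes and cancelling $\left[\mathrm F\left(0\right)\right]$ in the group completion yields $\left[A\right]=\left[\sigma\mathrm U\left(A\right)\right]$, hence $K\left(\sigma\right)\circ K\left(\mathrm U\right)=\mathrm{id}$, so $K\left(\mathrm U\right)$ and $K\left(\sigma\right)$ are mutually inverse.

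For naturality, given a morphism $f:X\to Y$ of $G$-manifolds I would use the pullback functors on $\mathrm{Vect}_G$ and, via Theorem \ref{thm: A}, on $\mathrm{Vect}_{TG}^{\mathrm{aff}}$: these preserve finite products and commute, up to canonical natural isomorphism, with $\mathrm U$ and with $\sigma$, here using $f^\ast\mathfrak g_Y\cong\mathfrak g_X$ canonically. Consequently the induced maps on Grothendieck groups commute with $K\left(\mathrm U\right)$ and $K\left(\sigma\right)$, which is precisely the asserted naturality. I expect the only real obstacle to be conceptual rather than computational: since $\sigma\circ\mathrm U$ is \emph{not} the identity on $\mathrm{Vect}_{TG}^{\mathrm{aff}}\left(X\right)$, the identity $K\left(\sigma\right)=K\left(\mathrm U\right)^{-1}$ cannot be deduced formally from $\mathrm U\circ\sigma=\mathrm{id}$ and genuinely rests on the Grothendieck-group identity $\left[A\right]=\left[\sigma\mathrm U\left(A\right)\right]$ — that is, on the fact that adding the free object $\mathrm F\left(0\right)$ destroys the structure map $\phi$; the accompanying point requiring care is confirming that the product of section \ref{subsec: products}, transported through Theorem \ref{thm: A}, is computed on underlying bundles as a direct sum, so that the shear automorphism above really is a morphism of affine actions.
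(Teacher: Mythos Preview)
Your proof is correct and follows essentially the same approach as the paper: both deduce $K(\mathrm U)\circ K(\sigma)=\mathrm{id}$ formally from $\mathrm U\circ\sigma=\mathrm{id}$, and then establish $[(E,\phi)]=[(E,0)]$ in the Grothendieck group via an explicit shear isomorphism after stabilisation, with naturality handled exactly as you describe. The only cosmetic difference is the choice of stabilising object: the paper (in its Lemma~\ref{lemma: E}) adds a second copy of $(E,\phi)$ and uses the shear $(e_1,e_2)\mapsto(e_1,e_2-e_1)$ on $E\oplus E$, whereas you add the initial object $\mathrm F(0)=(\mathfrak g_X,\mathrm{id})$ and use $(e,v)\mapsto(e+\phi(v),v)$ on $E\oplus\mathfrak g_X$.
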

\endgroup

\subsection{The complex case}
\label{subsec: complex case}

It is possible to reformulate the notion of affine action in the complex setting by replacing $TG$ by the complexified tangent bundle $T_\mathbb C G$ and considering actions $T_\mathbb CG\times E \to E$ on complex vector bundles $E$. The analogues of Theorems \ref{thm: A},\ref{thm: B} and \ref{thm: C} hold with essentially the same proofs. See section \ref{sec: complex} for the precise statements.

\subsection{The proofs}

Theorem A is proved using the facts that $TG \cong G \times \mathfrak g$ as a vector bundle, and $TG \cong \mathfrak g \rtimes G$ as a Lie group, where the semi-direct product is defined via the adjoint representation of $G$. This allows one to decompose an action $\mu$ of $TG$ into an action $\mu_G$ of $G$ and a linear map $\rho_\mathfrak g$ with domain $\mathfrak g$. Parts of the proof are similar to that of Theorem 3.5 in \cite{marrero}, which deals with the particular case where $E$ is a Lie algebroid, and constructs, at the level of objects, one direction of the isomorphism of Theorem \ref{thm: A}. 

Using the isomorphism $\mathrm{Vect}_{TG}^{\mathrm{aff}}\left(X\right) \cong  \mathfrak g_X \backslash \mathrm{Vect}_G\left(X\right)$ of Theorem A, Theorems B and C are proved using the following two category-theoretic Lemmas regarding over slice categories:

\begingroup
\setcounter{mainlemma}{3} 
\renewcommand\themainlemma{\Alph{mainlemma}}
\begin{mainlemma}
\label{lemma: D}
Let $\mathscr C$ be a category, $m$ an object in $\mathscr C$, and $\mathcal U: m\backslash \mathscr C \to \mathscr C$ the standard forgetful functor. If the coproduct $m \amalg a$ exists in $\mathscr C$ for all objects $a$ in $\mathscr C$ then the functor $\mathcal F: a \mapsto \left(m\amalg a, i_m\right)$ is left adjoint to $\mathcal U$, and this adjunction is monadic.
\end{mainlemma}
\endgroup

\begingroup
\setcounter{mainlemma}{4} 
\renewcommand\themainlemma{\Alph{mainlemma}}
\begin{mainlemma}
\label{lemma: E}
Let $\mathscr C$ be an additive category and $m$ an object in $\mathscr C$. Let $\mathcal U: m\backslash \mathscr C \to \mathscr C$ be the standard forgetful functor and $\mathcal S: \mathscr C \to m \backslash \mathscr C$ the section $a \mapsto \left(a,0\right)$. Then the group homomorphism
\[
K\left(\mathcal U \right): K\left(m\backslash \mathscr C\right) \to K\left(\mathscr C\right) 
\]
is an isomorphism. Its inverse is
\[
K\left(\mathcal S \right) : 
K\left(\mathscr C\right) \to
 K\left(m\backslash \mathscr C\right) 
\]
\end{mainlemma}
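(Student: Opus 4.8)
The plan is to work directly with the definition of the Grothendieck group of a category with finite products (here coproducts $m \amalg a$, which in an additive category agree with products $a \oplus m$). Since $\mathscr C$ is additive, the slice category $m \backslash \mathscr C$ has finite products: the product of $\phi:m\to a$ and $\psi:m\to b$ is the object $\binom{\phi}{\psi}:m \to a\oplus b$. First I would record that $\mathcal U$ preserves finite products (it sends $\binom{\phi}{\psi}$ to $a\oplus b$) and that $\mathcal S$ preserves finite products (it sends $a\oplus b$ to $(a\oplus b,0) = (a,0)\times(b,0)$), so both induce homomorphisms of Grothendieck groups. The identity $\mathcal U \circ \mathcal S = \mathrm{id}_{\mathscr C}$ is immediate, giving $K(\mathcal U)\circ K(\mathcal S) = \mathrm{id}$; the whole content is the reverse composite.

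The key step is to show $K(\mathcal S)\circ K(\mathcal U) = \mathrm{id}$ on $K(m\backslash\mathscr C)$, i.e.\ that for every object $\phi:m\to a$ the class of $(a,0)$ equals the class of $(a,\phi)$ in $K(m\backslash\mathscr C)$. The trick is to build an explicit isomorphism in $m\backslash\mathscr C$ between $(a,\phi)\times(m,\mathrm{id}_m)$ and $(a,0)\times(m,\mathrm{id}_m)$. Both have underlying object $a\oplus m$; the structure map of the first is $\binom{\phi}{\mathrm{id}_m}:m\to a\oplus m$ and of the second is $\binom{0}{\mathrm{id}_m}$. The shear automorphism $\begin{pmatrix}\mathrm{id}_a & \phi \\ 0 & \mathrm{id}_m\end{pmatrix}$ of $a\oplus m$ (using additivity to define matrices of morphisms, with $\phi:m\to a$) is an isomorphism of $\mathscr C$ carrying $\binom{0}{\mathrm{id}_m}$ to $\binom{\phi}{\mathrm{id}_m}$, hence is a morphism, and therefore an isomorphism, in $m\backslash\mathscr C$. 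Passing to Grothendieck groups, $[(a,\phi)] + [(m,\mathrm{id}_m)] = [(a,0)] + [(m,\mathrm{id}_m)]$, and cancelling in the abelian group $K(m\backslash\mathscr C)$ gives $[(a,\phi)] = [(a,0)] = (K(\mathcal S)\circ K(\mathcal U))[(a,\phi)]$. Since such classes generate, this proves $K(\mathcal S)\circ K(\mathcal U)=\mathrm{id}$.

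I would then assemble the two identities to conclude $K(\mathcal U)$ and $K(\mathcal S)$ are mutually inverse isomorphisms. The main obstacle I anticipate is purely bookkeeping: verifying carefully that the matrix/shear construction is legitimate in a general additive category (biproducts, the correspondence between morphisms $a\oplus m \to a\oplus m$ and $2\times 2$ matrices of morphisms, and that the shear is invertible with inverse given by negating the off-diagonal entry), and confirming that the Grothendieck group here is defined via the symmetric monoidal structure coming from finite products so that the cancellation step is valid. No deep input is needed beyond the elementary theory of biproducts in additive categories and the group-completion of a commutative monoid.
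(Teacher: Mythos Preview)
Your proposal is correct and follows essentially the same strategy as the paper: both arguments reduce to showing $[(a,\phi)]=[(a,0)]$ in $K(m\backslash\mathscr C)$ via a shear-type isomorphism after stabilising. The only cosmetic difference is the choice of stabiliser---you add $(m,\mathrm{id}_m)$ and use the shear $\left(\begin{smallmatrix}\mathrm{id}_a & \phi\\ 0 & \mathrm{id}_m\end{smallmatrix}\right)$, while the paper adds a second copy of $(a,\phi)$ and uses $h=(\mathrm{pr}_1,\mathrm{pr}_2-\mathrm{pr}_1)$ to exhibit $(a,\phi)\times(a,\phi)\cong(a,\phi)\times(a,0)$.
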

\endgroup

Here, $K\left(\mathcal U\right)$ denotes the homomorphism of Grothendieck groups associated to the product preserving functor $\mathcal U$, and similarly for $K\left(\mathcal S\right)$, see section \ref{subsec: Grothendieck}. We expect that Lemma \ref{lemma: D} is well known to experts, (in particular it is stated without proof in \cite{nlab}), but we are unaware of a complete reference and so have provided a proof.

\subsection{Outline}

In section \ref{sec: prelim} we fix notation and conventions. In section \ref{sec: affine} we define affine actions and morphisms between them. The main result of section \ref{sec: affine} is Theorem \ref{thm: A}, the proof of which is broken into Lemma \ref {lemma: decomp} and Propositions \ref{prop: decomp}, \ref{prop: decomp2} and \ref{prop: decomp  homs}. In section \ref{sec: examples} we describe a number of examples of affine actions, and describe the category $\mathrm{Vect}_{TG}\left(X\right)$ for certain classes of groups $G$ and $G$-manifolds $X$. In section \ref{sec: functors} we define several functors between $\mathrm{Vect}_{TG}\left(X\right)$ and $\mathrm{Vect}_G\left(X\right)$. We then prove Lemma \ref{lemma: D} which is used to prove Theorem \ref{thm: B}. In section \ref{sec: K} we define pullback functors for affine actions. We then prove that the category $\mathrm{Vect}_{TG}^{\mathrm{aff}}\left(X\right)$ has finite products and use this to define the abelian group $K_{TG}^{\mathrm{aff}}\left(X\right)$ and the functor $K_{TG}^{\mathrm{aff}}\left(-\right):G\text{-Man}\to \mathrm{Ab}$. We then prove Lemma \ref{lemma: E} from which we prove Theorem \ref{thm: C}.

\subsection{Acknowledgements}

The author would like to thank Martina Balagovi\'{c} for useful comments on the first draft of this paper, and Peter J\o rgensen for his continued guidance during the completion of this work. The author would also like to thank the Department of Mathematics at the University of Zagreb for their hospitality during part of the time in which this work was completed.


\section{Preliminaries}
\label{sec: prelim}

\subsection{Notation and conventions}

By `manifold' we shall always mean smooth finite dimensional real manifold. Maps between manifolds are assumed to be smooth. Unless stated otherwise, by `vector bundle' we mean finite dimensional real vector bundle. We will usually denote manifolds by $X$ or $Y$, vector bundles by $E$ or $F$, vector fields by $v$ or $w$, and sections of vector bundles by $\xi$ or $\nu$.  For a vector bundle $E$ over $X$ we use $\pi_E:E \to X$ to denote the bundle projection, and $0_E:X \to E$ to denote the zero section. We allow morphisms of vector bundles over different bases. If $E$ and $F$ are vector bundles over $X$, then by `morphism of vector bundles over $X$' we mean a vector bundle morphism $\phi:E\to F$ which satisfies $\pi_F \circ \phi = \pi_E$. If $\phi$ is a morphism of this type, then $\Gamma\left(\phi\right):\Gamma\left(E\right) \to \left(F\right)$ denotes the associated linear map.

For $E\to X$ a vector bundle, $x\in X$ and $\xi \in \Gamma\left(E\right)$, we use $\xi_x$ to denote $\xi$ evaluated at $x$. We use $e_x$ to denote an element of $E_x$ and $v_x$ to denote an element of $T_xX$. We denote the zero element of $E_x$ by $0_x$. For a morphism of vector bundles $\phi:E \to F$ over $X$ and $x\in X$ we denote by $\phi_x:E_x \to F_x$ the restriction of $\phi$.

If $E\to X$ and $F\to Y$ are vector bundles then $E\times F$ is a vector bundle over $X\times Y$ in a natural way, with fibre over $\left(x,y\right)$ canonically isomorphic to $E_x \oplus F_y$.

We denote hom-sets in a category $\mathscr C$ by $\mathrm{Hom}_\mathscr C\left(-,-\right)$. We reserve the unadorned $\mathrm{Hom}$ for morphisms of real vector spaces (i.e. linear maps). If $G$ is a Lie group then we use $\mathrm{Hom}_G$ for morphisms of representations of $G$ (i.e. $G$-equivariant linear maps). We denote identity morphisms by $1_a$ or $\mathrm{id}_a$. If $\mathscr C$ is an additive category then we denote any zero-morphism by $0$. If $a \times b$ is a product in a category $\mathscr C$ then we denote the associated projections by $\mathrm{pr}_a:a\times b \to a$ and $\mathrm{pr}_b:a\times b \to b$. Similarly, if $a \amalg b$ is a coproduct then we denote the associated inclusions by $i_a:a \to a\amalg b$ and $i_b:b \to a\amalg b$. If $a \amalg b$ is a coproduct and $f:a \to c$ and $g:b \to c$ are morphisms, then we denote by $\left(f,g\right):a\amalg b \to c$ the associated morphism. We use a similar notation for morphisms into products.

If $G$ is a Lie group then by a `$G$-manifold' we shall mean a smooth manifold $X$ equipped with a smooth left action $t:G\times X \to X$. We denote by $t_g=t\left(g,-\right):X\to X$ the diffeomorphism associated to $g\in G$, which we also denote by $x \mapsto g \cdot x$. If $X$ and $Y$ are $G$-manifolds then by a $G$-map $f:X\to Y$ we mean a $G$-equivariant smooth map. 

For a $G$-manifold $X$, by `$G$-equivariant vector bundle', or just `$G$-vector bundle', we mean a vector bundle $E\to X$ equipped with a left action $\mu_G : G\times E \to E$ which covers $t:G\times X \to X$ and which is fibrewise linear. A morphism $\phi:E\to F$ of $G$-equivariant vector bundles is a morphism of vector bundles over $X$ which is also $G$-equivariant. We denote by $\mathrm{Vect}_G\left(X\right)$ (respectively $\mathrm{Vect}_G^\mathbb C\left(X\right)$) the category of real (respectively complex) $G$-equivariant vector bundles over $X$. (See \cite{segal} for generalities on equivariant vector bundles.) $\mathrm{Vect}_G\left(X\right)$ and $\mathrm{Vect}_G^\mathbb C\left(X\right)$ are both additive categories. In particular, for $E$ and $F$ $G$-vector bundles the $G$-vector bundle $E\oplus F$ is both the product and the coproduct of $E$ and $F$. 

We denote the category of finite dimensional real (respectively complex) representations of $G$ by $\mathrm{Rep}\left(G\right)$ (respectively $\mathrm{Rep}_\mathbb C \left(G\right)$). If $V$ is such a representation then we denote by $V_X$ the associated $G$-vector bundle, i.e. the trivial bundle $X\times V$ with $G$-action $g \cdot \left(x,v\right) = \left(g\cdot x,g\cdot v\right)$. If $E$ is a $G$-equivariant vector bundle then we consider $\Gamma\left(E\right)$ as a $G$-representation with $G$-action $\left(g \cdot \xi\right)_x = g \cdot \xi_{g^{-1}\cdot x}$. If $V$ and $W$ are representations of $G$, and $E$ is a $G$-vector bundle, then there exist bijections
\begin{align}
\label{eqn: hom(VX,WX)}
\mathrm{Hom}_{\mathrm{Vect}_G\left(X\right)} \left(V_X,W_X\right)
& \xrightarrow{\cong}
C^\infty\left(X,\mathrm{Hom}\left(V,W\right)\right)^G \\
\nonumber
\phi & \mapsto \left( x \mapsto \phi_x\right)
\end{align}

\begin{align}
\label{eqn: hom(VX,E)}
\mathrm{Hom}_{\mathrm{Vect}_G\left(X\right)} \left(V_X, E\right)
& \xrightarrow{\cong}
\mathrm{Hom}_G\left(V,\Gamma\left(E\right)\right)
 \\
\nonumber
\phi & \mapsto \left(v \mapsto \left(x\mapsto \phi\left(x,v\right)\right) \right)
\end{align}

\subsection{Tangent groups}
\label{subsec: TG}

Let $G$ be a Lie group and $\mathfrak g=T_eG$ its Lie algebra. We will usually denote elements of $G$ by $g$ or $h$, and elements of $\mathfrak g$ by $\alpha$ or $\beta$. The tangent bundle $TG$ of $G$ carries a natural Lie group structure with multiplication defined by the composite map
$TG \times TG \xrightarrow{\cong}
T\left(G\times G\right) \xrightarrow{m_\ast} TG$ where $m:G\times G \to G$ is the multiplication of $G$. We will denote the multiplication in $TG$ by $\bullet$. If $v_g \in T_gG$ and $w_h \in T_hG$ then it follows from the chain rule that
\begin{equation*}
v_g \bullet w_h = \left(L_g\right)_\ast w_h + \left(R_h\right)_\ast v_g
\end{equation*}
In particular, $0_g \bullet \left(-\right) = \left(L_g\right)_\ast$, $\left(-\right)\bullet 0_h = \left(R_h\right)_\ast$,  $0_g \bullet 0_h = 0_{gh}$, and if $\alpha,\beta \in \mathfrak g$ then $\alpha \bullet \beta = \alpha + \beta$.
If one considers $\mathfrak g$ as an abelian Lie group upon which $G$ acts via the adjoint representation then the associated semi-direct product $\mathfrak g \rtimes G$ has multiplication 
\[
\left(\alpha,g\right) \bullet \left(\beta,h\right) 
= \left(\alpha+\mathrm{Ad}_g\beta,gh\right)
\] 
There is a Lie group isomorphism $\mathfrak g \rtimes G \xrightarrow{\cong} TG$ given by $\left(\alpha,g\right) \mapsto \left(R_g\right)_\ast \alpha$. Under this isomorphism, the inclusion $\mathfrak g \hookrightarrow\mathfrak g \rtimes G$ corresponds to $\mathfrak g = T_eG \hookrightarrow TG$, the inclusion $G\hookrightarrow \mathfrak g \rtimes G$ corresponds to $0_{TG}:G\to TG$, and the projection $\mathfrak g \rtimes G \to G$ corresponds to $\pi_{TG}:TG \to G$.

\subsection{Grothendieck groups}
\label{subsec: Grothendieck}

Let $\mathscr C$ be an essentially small category (one where the collection of isomorphism classes of objects in $\mathscr C$ is a set) with finite products. The set $\mathscr C/\cong$ of isomorphism classes of objects in $\mathscr C$ forms a commutative monoid under the operation $\left[E\right] + \left[E'\right] \equiv \left[E\times E'\right]$. We denote by $K\left(\mathscr C\right)$ the associated abelian group defined by the Grothendieck construction. If $\mathscr C$ is an additive category then this agrees with the standard notion of the `split' Grothendieck group of $\mathscr C$, i.e. the abelian group generated by isomorphism classes of objects and relations $\left[A\right]+\left[B\right]=\left[A\oplus B\right]$.

If $\mathcal F:\mathscr C\to \mathscr C'$ is a product preserving functor between categories satisfying the above assumptions then there is a group homomorphism $K\left(\mathcal F\right):K\left(\mathscr C\right) \to K\left(\mathscr C'\right)$
defined by $\left[E\right] - \left[E'\right] \mapsto \left[\mathcal F\left(E\right)\right] - \left[\mathcal F\left(E'\right)\right]$. The group homomorphism $K\left(\mathcal F\right)$ depends functorially on $\mathcal F$. If $\mathcal F,\mathcal F':\mathscr C \to \mathscr C'$ are naturally isomorphic functors then $K\left(\mathcal F\right) = K\left(\mathcal F'\right)$.

We write $KO_G\left(X\right)$ for $K\left(\mathrm{Vect}_G\left(X\right)\right)$ and  $K_G\left(X\right)$ for $K\left(\mathrm{Vect}_G^\mathbb C\left(X\right)\right)$. This agrees with $G$-equivariant topological $K$-theory as defined by Segal in \cite{segal} if both $G$ and $X$ are compact.


\section{Affine actions}
\label{sec: affine}

Throughout the paper, $G$ denotes a Lie group and $X$ denotes a $G$-manifold with action $t:G\times X \to X$. 

\subsection{Affine actions}
\label{subsection: affine actions}

\begin{definition}
\label{def: affine actions}
An \emph{affine action} of $TG$ on a real vector bundle $E \to X$ is a left action $\mu:TG \times E \to E$ of the Lie group $TG$ on the total space of $E$ such that $\mu$ is a vector bundle morphism covering $t:G\times X \to X$.

\end{definition}

\begin{example}
The derivative of $t$ defines an affine action $t_\ast : TG \times TX \to TX$ of $TG$ on $TX$.
\end{example}

\begin{remark}
\label{rem: affine}
The condition that $\mu$ is a morphism of vector bundles covering $t$ is equivalent to requiring that:
\begin{enumerate} 
\item The following diagram commutes:
\begin{equation}
\label{diag: affine action}
\xymatrix
{
TG \times E   \ar[d]  \ar[r]^-{\mu}  & E \ar[d] \\
G \times X \ar[r]_-{t}  & X
}
\end{equation}
\item For all $\left(g,x\right) \in G\times X$ the restriction of $\mu$ to the fibre $T_gG \oplus E_x$ over $\left(g,x\right)$ is a linear map $T_g G \oplus E_x \to E_{g \cdot x}$.
\end{enumerate}
The second of these conditions implies that $v_g \in T_g G$ acts on $e_x \in E_x$ by the composite map
$e_x  \mapsto \left(v_g,e_x\right)  \mapsto \mu\left(v_g,e_x\right)$. This is an affine linear map from $E_x$ to $E_{g\cdot x}$, which justifies the name \emph{affine action}.
\end{remark}

\begin{definition}
\label{def: morphisms of affine actions}
A \emph{morphism} from an affine action $\mu : TG \times E \to E$ to an affine action $\mu' : TG \times E'\to E'$ is a morphism $\psi : E \to E'$ of vector bundles over $X$ which is $TG$ equivariant, i.e. the following diagrams commute:
\[
\xymatrix{
E \ar[dr]_{\pi_E} \ar[r]^{\psi} & E' \ar[d]^{\pi_{E'}}  && TG \times E \ar[d]_{\mathrm{id} \times \psi} \ar[r]^-{\mu} & E \ar[d]^{\psi} \\
& X  &&TG \times E'  \ar[r]_-{\mu'}  & E'
}
\]
\end{definition}

\begin{definition}
Affine actions of $TG$ over $X$ form a category $\mathrm{Vect}_{TG}^{\mathrm{aff}}\left(X\right)$.
\end{definition}

\begin{remark}
If one considers $TG$ as a group object in the category of vector bundles, then affine actions coincide with the notion of actions of group objects - see, for example \cite{maclane}.
\end{remark}

\subsection{The structure of affine actions}
\label{subsec: decomp}

The following Lemma \ref{lemma: decomp} and Propositions \ref{prop: decomp}, \ref{prop: decomp2} and \ref{prop: decomp homs} describe how an affine action $TG\times E \to E$ can be decomposed into a $G$-action $G\times E \to E$ and a linear map $\mathfrak g \to \Gamma\left(E\right)$, and how morphisms between affine actions can be described in terms of this decomposition. These Propositions will be used in section \ref{subsec: the category} to prove our first main result, Theorem \ref{thm: A}. The first proposition shows that an affine action can be recovered from its restriction to $G$ and $\mathfrak g$, as motivated by Example \ref{ex: intro tangent} in the introduction.

\begin{lemma}
\label{lemma: decomp}
If $\mu:TG\times E \to E$ is an affine action on a vector bundle $E\to X$ then
\begin{equation*}
\mu\left(v_g,e_x\right) = \mu\left(0_g,e_x\right) + \mu\left( \left(R_{g^{-1}}\right)_\ast v_g,0_{g\cdot x}\right)
\end{equation*}
\end{lemma}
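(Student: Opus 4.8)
The plan is to use the semi-direct product decomposition $TG \cong \mathfrak g \rtimes G$ recorded in section \ref{subsec: TG}, together with the fact that $\mu$ is a group action, to split the action of an arbitrary tangent vector $v_g$ into the action of a ``translation'' part coming from $\mathfrak g$ and a ``group'' part coming from $G$. Concretely, under the isomorphism $\mathfrak g \rtimes G \xrightarrow{\cong} TG$, $\left(\alpha,g\right) \mapsto \left(R_g\right)_\ast\alpha$, the element $v_g \in T_gG$ corresponds to $\left(\left(R_{g^{-1}}\right)_\ast v_g,\, g\right)$, and in $\mathfrak g \rtimes G$ we have the factorisation $\left(\alpha,g\right) = \left(\alpha,e\right)\bullet\left(0,g\right)$. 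Translating back to $TG$, this reads $v_g = \left(R_{g^{-1}}\right)_\ast v_g \,\bullet\, 0_g$, which one can also verify directly from the formula $v_g\bullet w_h = \left(L_g\right)_\ast w_h + \left(R_h\right)_\ast v_g$ in section \ref{subsec: TG}: taking the factors $\left(R_{g^{-1}}\right)_\ast v_g \in \mathfrak g = T_eG$ and $0_g \in T_gG$ gives $\left(L_e\right)_\ast 0_g + \left(R_g\right)_\ast\left(R_{g^{-1}}\right)_\ast v_g = 0_g + v_g = v_g$.

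First I would establish this identity $v_g = \left(R_{g^{-1}}\right)_\ast v_g \bullet 0_g$ in $TG$. Then, applying the action property $\mu\left(p\bullet q, e\right) = \mu\left(p,\mu\left(q,e\right)\right)$ with $p = \left(R_{g^{-1}}\right)_\ast v_g$, $q = 0_g$, and $e = e_x$, we get
\[
\mu\left(v_g,e_x\right) = \mu\left(\left(R_{g^{-1}}\right)_\ast v_g,\, \mu\left(0_g,e_x\right)\right).
\]
Now $\mu\left(0_g,e_x\right)$ lies in $E_{g\cdot x}$, and $\left(R_{g^{-1}}\right)_\ast v_g \in \mathfrak g \subset TG$ sits over $e \in G$, so by the covering condition (diagram \eqref{diag: affine action}) the element $\mu\left(\left(R_{g^{-1}}\right)_\ast v_g, -\right)$ is an affine linear self-map of the fibre $E_{g\cdot x}$. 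The key input is the fibrewise linearity from Remark \ref{rem: affine}(2): the restriction of $\mu$ to $T_eG \oplus E_{g\cdot x} \to E_{g\cdot x}$ is linear, so for any $w \in \mathfrak g$ and $f_{g\cdot x}, f'_{g\cdot x} \in E_{g\cdot x}$ we have $\mu\left(w, f_{g\cdot x} + f'_{g\cdot x}\right) = \mu\left(w, f_{g\cdot x}\right) + \mu\left(0_e, f'_{g\cdot x}\right) - \mu\left(0_e, 0_{g\cdot x}\right)$; since $0_e$ is the identity of $TG$ this gives $\mu\left(w, f + f'\right) = \mu\left(w,f\right) + f'$. Applying this with $w = \left(R_{g^{-1}}\right)_\ast v_g$, $f = 0_{g\cdot x}$, $f' = \mu\left(0_g,e_x\right)$ yields
\[
\mu\left(v_g,e_x\right) = \mu\left(\left(R_{g^{-1}}\right)_\ast v_g,\, 0_{g\cdot x}\right) + \mu\left(0_g,e_x\right),
\]
which is the claimed formula.

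The main obstacle I anticipate is bookkeeping the base points and making the ``affine implies $\mu(w, f+f') = \mu(w,f) + f'$'' step rigorous: one must be careful that the two summands in the asserted equality indeed lie in the same fibre $E_{g\cdot x}$ (both $\mu\left(0_g,e_x\right)$ and $\mu\left(\left(R_{g^{-1}}\right)_\ast v_g, 0_{g\cdot x}\right)$ do, the latter because $\left(R_{g^{-1}}\right)_\ast v_g$ covers $e$), and that the linearity statement is being used for the correct fibre. A clean way to organize this is to note that an affine map $A\colon E_{g\cdot x}\to E_{g\cdot x}$ with $A = \mu\left(\left(R_{g^{-1}}\right)_\ast v_g, -\right)$ decomposes uniquely as $A(f) = L(f) + A(0)$ with $L$ linear; one then shows $L = \mathrm{id}$ by observing that $\left(R_{g^{-1}}\right)_\ast v_g \bullet 0_g \bullet 0_{g^{-1}} = 0_e$ or, more simply, that the linear part of the action of any element of $\mathfrak g$ on a fibre is the identity because $\mathfrak g$ acts ``by translations'' — this last fact itself follows from evaluating the action-homomorphism property on $\alpha\bullet\beta = \alpha+\beta$ together with fibrewise linearity. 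Everything else is a direct substitution.
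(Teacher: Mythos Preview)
Your argument is correct, but it organises the two ingredients in the opposite order from the paper. The paper first uses fibrewise linearity on the fibre $T_gG\oplus E_x$ to write
\[
\mu(v_g,e_x)=\mu\bigl((0_g,e_x)+(v_g,0_x)\bigr)=\mu(0_g,e_x)+\mu(v_g,0_x),
\]
and only afterwards invokes the group action, writing $v_g = v_g\bullet 0_{g^{-1}}\bullet 0_g$ to convert $\mu(v_g,0_x)$ into $\mu\bigl((R_{g^{-1}})_\ast v_g,\,0_{g\cdot x}\bigr)$. You instead first factorise $v_g=(R_{g^{-1}})_\ast v_g\bullet 0_g$ in $TG$, apply the action axiom, and then use linearity on the fibre $T_eG\oplus E_{g\cdot x}$ together with $\mu(0_e,-)=\mathrm{id}$ to peel off the translation. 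Both routes use exactly the same two facts; the paper's ordering is a little more economical because the linearity step $\mu(v_g,e_x)=\mu(0_g,e_x)+\mu(v_g,0_x)$ is immediate and requires no discussion of the affine structure of $\mu(w,-)$. In your version, the identity $\mu(w,f+f')=\mu(w,f)+f'$ for $w\in\mathfrak g$ follows in one line from linearity plus $\mu(0_e,f')=f'$; the extra term $-\mu(0_e,0_{g\cdot x})$ you wrote is zero, and the closing paragraph about showing the linear part $L$ equals $\mathrm{id}$ is unnecessary once you remember that $0_e$ is the group identity.
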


\begin{proof}
Using the fact that $\mu$ is fibrewise linear we have
\begin{align*}
\mu\left(v_g,e_x\right) & =
\mu\left(v_g + 0_g,e_x+0_x\right) \\
& = \mu\left(   \left(0_g,e_x\right) + \left(v_g,0_x\right) \right) \\
& = \mu\left(0_g,e_x\right) + \mu\left(v_g,0_x\right)
\end{align*}
Using the fact that $\mu$ is a left action of $TG$ we have
\begin{align*}
\mu\left(v_g,0_x\right) & = \mu\left(v_g \bullet 0_{g^{-1}} \bullet 0_g , 0_x\right) \\
& = \mu \left(   \left(R_{g^{-1}}\right)_\ast v_g, \mu\left(0_g,0_x\right)\right) \\
& = \mu \left(   \left(R_{g^{-1}}\right)_\ast v_g, 0_{g\cdot x}\right)
\end{align*}
\end{proof}

\begin{proposition}
\label{prop: decomp}
Let $E\to X$ be a vector bundle. There is a bijection 
\begin{align}
\nonumber
& \left\{ \text{affine actions } \mu:TG \times E \to E \right\} \\
\label{eqn: bijection}
 \xrightarrow{\cong} &
\left\{ \text{pairs } \left(\mu_G,\rho_\mathfrak g\right)
\text{satisfying  conditions } \left(\star\right),\left(\star\star\right) \text{ below } \right\}
\end{align}
where:
\begin{itemize}
\item[$\left(\star\right)$] $\mu_G : G \times E \to E$ is a left action of $G$ on $E$ making $E$ into a $G$-equivariant vector bundle over $X$, 
\item[$\left(\star\star\right)$] $\rho_\mathfrak g : \mathfrak g \to \Gamma\left(E\right)$ is a $G$-equivariant linear map, where  the $G$-action on $\Gamma\left(E\right)$ is induced from $\mu_G$.
\end{itemize}
The bijection \eqref{eqn: bijection} maps an affine action $\mu$ to the pair $\left(\mu_G,\rho_\mathfrak g\right)$ defined by
\begin{align}
\label{eqn: decomp1}
\mu_G \left(g,e_x\right) & = \mu\left(0_g,e_x\right) \\
\label{eqn: decomp2}
\rho_\mathfrak g \left(\alpha\right) & = \left(x \mapsto \mu\left(\alpha,0_x\right) \right)
\end{align}
The inverse of \eqref{eqn: bijection} maps a pair $\left(\mu_G,\rho_\mathfrak g\right)$ satisfying $\left(\star\right),\left(\star\star\right)$ to the affine action $\mu$ defined by
\begin{equation}
\label{eqn: reconstruct}
\mu\left(v_g,e_x\right) = \mu_G\left(g,e_x\right) + \rho_\mathfrak g\left( \left(R_{g^{-1}}\right)_\ast v_g\right)_{g \cdot x}
\end{equation}
for $v_g \in T_gG$ and $e_x \in E_x$.
\end{proposition}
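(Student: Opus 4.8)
The plan is to verify that the two maps described in the statement are mutually inverse bijections by unwinding the definitions, using Lemma \ref{lemma: decomp} together with the semi-direct product decomposition $TG \cong \mathfrak g \rtimes G$ recalled in section \ref{subsec: TG}. First I would check that the forward map is well defined: given an affine action $\mu$, I must show that $\mu_G(g,e_x) := \mu(0_g,e_x)$ is indeed a $G$-equivariant vector bundle structure, and that $\rho_\mathfrak g(\alpha) := (x \mapsto \mu(\alpha,0_x))$ is a well-defined element of $\Gamma(E)$ and a $G$-equivariant linear map. For $(\star)$: the action axioms for $\mu_G$ follow from those of $\mu$ restricted along the group inclusion $G \hookrightarrow TG$ given by $g \mapsto 0_g$ (which is a group homomorphism since $0_g \bullet 0_h = 0_{gh}$), covering $t$ because $\mu$ covers $t$, and fibrewise linear because $\mu$ is. For the first half of $(\star\star)$: linearity of $\alpha \mapsto \rho_\mathfrak g(\alpha)$ in $\alpha$ follows from fibrewise linearity of $\mu$ together with the fact that the group operation on $\mathfrak g \subset TG$ is $\alpha \bullet \beta = \alpha + \beta$; that $\mu(\alpha,0_x) \in E_x$ (so that $\rho_\mathfrak g(\alpha)$ is a genuine section) follows from $\mu$ covering $t$ and $\alpha$ lying over $e \in G$, so $\pi_E(\mu(\alpha,0_x)) = t(e,x) = x$. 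Smoothness of $x \mapsto \mu(\alpha,0_x)$ is immediate from smoothness of $\mu$.

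Next I would establish $G$-equivariance of $\rho_\mathfrak g$, i.e. $\rho_\mathfrak g(\mathrm{Ad}_g\alpha) = g \cdot \rho_\mathfrak g(\alpha)$ with respect to the induced $G$-action on $\Gamma(E)$, namely $(g \cdot \xi)_x = g \cdot \xi_{g^{-1}\cdot x} = \mu(0_g, \xi_{g^{-1}\cdot x})$. The key computation is the conjugation identity in $TG$: under $\mathfrak g \rtimes G \cong TG$ we have $0_g \bullet \alpha \bullet 0_{g^{-1}} = \mathrm{Ad}_g \alpha$ (this is exactly the semi-direct product structure). Applying $\mu$ and using that it is a left action,
\[
\mu(\mathrm{Ad}_g\alpha, 0_x) = \mu\bigl(0_g \bullet \alpha \bullet 0_{g^{-1}}, 0_x\bigr) = \mu\bigl(0_g, \mu(\alpha, \mu(0_{g^{-1}},0_x))\bigr) = \mu\bigl(0_g, \mu(\alpha, 0_{g^{-1}\cdot x})\bigr),
\]
which is precisely $(g \cdot \rho_\mathfrak g(\alpha))_x$. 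This gives $(\star\star)$.

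Then I would check the backward map is well defined: given a pair $(\mu_G,\rho_\mathfrak g)$ satisfying $(\star),(\star\star)$, formula \eqref{eqn: reconstruct} defines an affine action. That the resulting $\mu$ is a vector bundle morphism covering $t$ is clear from the formula (the first term lies in $E_{g\cdot x}$ since $\mu_G$ covers $t$, the second is the evaluation of a section at $g\cdot x$, and both depend linearly on $(v_g,e_x)$ for fixed $g$). The substantive point is that $\mu$ is a left action of $TG$, i.e. $\mu(0_e, e_x) = e_x$ (immediate) and $\mu(v_g \bullet w_h, e_x) = \mu(v_g, \mu(w_h,e_x))$ for all $v_g \in T_gG$, $w_h \in T_hG$. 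Using the bullet formula $v_g \bullet w_h = (L_g)_\ast w_h + (R_h)_\ast v_g$, and hence $(R_{(gh)^{-1}})_\ast(v_g\bullet w_h) = (R_{h^{-1}})_\ast v_g \cdot$-conjugated-appropriately $+\ (R_{(gh)^{-1}}L_g)_\ast w_h$, one expands both sides; the $\mu_G$-part matches because $\mu_G$ is an action, and the cross terms involving $\rho_\mathfrak g$ match precisely because of the $G$-equivariance $(\star\star)$ of $\rho_\mathfrak g$ — this is where $(\star\star)$ is indispensable, mirroring how it was forced in the forward direction. I expect this action-axiom verification to be the main obstacle: it is the one step requiring genuine care with the group structure on $TG$, the identification $(R_{g^{-1}})_\ast v_g \in \mathfrak g$, and the interaction between $\mathrm{Ad}$ and $\rho_\mathfrak g$. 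Finally, that the two maps are mutually inverse is a direct check: starting from $\mu$, plugging \eqref{eqn: decomp1} and \eqref{eqn: decomp2} into \eqref{eqn: reconstruct} reproduces $\mu$ by Lemma \ref{lemma: decomp} (noting $\mu\bigl((R_{g^{-1}})_\ast v_g, 0_{g\cdot x}\bigr) = \rho_\mathfrak g\bigl((R_{g^{-1}})_\ast v_g\bigr)_{g\cdot x}$ since $(R_{g^{-1}})_\ast v_g \in \mathfrak g$); conversely, starting from $(\mu_G,\rho_\mathfrak g)$, evaluating \eqref{eqn: reconstruct} at $v_g = 0_g$ recovers $\mu_G$ and at $v_g = \alpha \in \mathfrak g$ (so $g = e$, $(R_{e^{-1}})_\ast\alpha = \alpha$) recovers $\rho_\mathfrak g$.
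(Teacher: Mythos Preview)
Your proposal is correct and follows essentially the same approach as the paper. The only minor difference is in verifying the action axiom for the reconstructed $\mu$: the paper transports the action along the isomorphism $TG \cong \mathfrak g \rtimes G$ and checks $(\alpha,g)\cdot((\beta,h)\cdot e_x) = ((\alpha,g)\bullet(\beta,h))\cdot e_x$ directly in those coordinates, which is slightly cleaner than your suggestion of expanding $v_g \bullet w_h$ using the bullet formula, but the content is identical.
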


Note that the addition on the right hand side of \eqref{eqn: reconstruct} is defined as the assumptions on $\mu_G$ and $\rho_\mathfrak g$ imply that both terms are elements of $E_{g\cdot x}$.

\begin{proof}
We first show that if $\mu:TG\times E \to E$ is an affine action then the pair $\left(\mu_G,\rho_\mathfrak g\right)$ defined by \eqref{eqn: decomp1} and \eqref{eqn: decomp2} satisfies $\left(\star\right)$ and $\left(\star\star\right)$, which shows that the map \eqref{eqn: bijection} is well defined. That $\mu_G$ is a left action of $G$ follows from the facts that $\mu$ is equal to the composite map
\begin{equation*}
G \times E \xrightarrow{0_{TG}\times \mathrm{id}_E} TG \times E \xrightarrow{\mu} E
\end{equation*}
and that $0_{TG}:G \to TG$ is a Lie group homomorphism. The fact that $\mu$ is a vector bundle morphism covering $t:G\times X \to X$ implies that for fixed $g\in G$ the map $\mu_G\left(g,-\right) : E \to E$ is a vector bundle morphism covering $t\left(g,-\right):X\to X$. 
This shows that $\mu_G$ satisfies $\left(\star\right)$. We will sometimes denote this $G$ action by $g \cdot e_x = \mu_G\left(g,e_x\right)$.

If $\alpha \in \mathfrak g=T_eG$ then the commutativity of \eqref{diag: affine action} and the fact that $\left(\alpha,0_x\right) \in T_eG \oplus E_x$ implies that $\mu\left(\alpha,0_x\right) \in E_x$, so that the map $\rho_\mathfrak g\left(\alpha\right) = \left(x \mapsto \mu\left(\alpha,0_x\right)\right)$ is a smooth section of $E$. The fibrewise linearity of $\mu$ implies that the map $\alpha \mapsto \rho_\mathfrak g \left(\alpha\right) \in \Gamma\left(E\right)$ is linear. If $g \in G$, $\alpha \in \mathfrak g$ and $x \in X$ then 
\begin{align*}
\left(\rho_\mathfrak g \left(\mathrm{Ad}_g \alpha \right)\right)_{x}
& =
\mu\left(0_g\bullet \alpha \bullet 0_{g^{-1}},0_x\right) \\
& =
0_g \cdot \mu\left(\alpha,0_{g^{-1}} \cdot 0_x\right) \\
& =
0_g \cdot \mu\left(\alpha,0_{g^{-1}\cdot x}\right) \\
& = 
g \cdot \rho_\mathfrak g \left(\alpha\right)_{g^{-1}\cdot x} \\
& =
\left(g \cdot \rho_\mathfrak g\left(\alpha\right)\right)_x
\end{align*}
This shows that $\rho_\mathfrak g$ satisfies $\left(\star\star\right)$.

Now let $\left(\mu_G,\rho_\mathfrak g\right)$ be a pair satisfying $\left(\star\right)$ and $\left(\star\star\right)$, and let us show that $\mu$ as defined in \eqref{eqn: reconstruct} defines a fibrewise linear left action of $TG$. That $\mu$ is fibrewise linear follows from the fibrewise linearity of $\mu_G$ and the linearity of $\rho_\mathfrak g$. Transporting the action from $TG$ to $\mathfrak g\rtimes G$ via the Lie group isomorphism $v_g \mapsto \left(\left(R_{g^{-1}}\right)_\ast v_g,g\right)$, we have
 \begin{align*}
\left(\alpha,g\right) \cdot e_x 
& =  g \cdot e_x + \rho_\mathfrak g \left(\alpha\right)_{g\cdot x} 
\end{align*}
If $\left(\alpha,g\right),\left(\beta,h\right) \in G \ltimes \mathfrak g$ then using $\left(\star\right)$ and $\left(\star\star\right)$ we have
\begin{align*}
\left(\alpha,g\right) \cdot \left( \left(\beta,h\right) \cdot e_x \right) 
& = 
 \left(\alpha,g\right)\cdot \left( h \cdot e_x + \rho_\mathfrak g \left(\beta\right)_{h \cdot x}    \right) \\
& = 
 g \cdot \left(h \cdot e_x\right) +
g \cdot \rho_\mathfrak g \left(\beta\right)_{h \cdot x} + \rho_\mathfrak g \left(\alpha\right)_{gh\cdot x} \\
& =   gh \cdot e_x +
\rho_\mathfrak g \left(\mathrm{Ad}_g\beta\right)_{gh \cdot x} + \rho_\mathfrak g \left(\alpha\right)_{gh\cdot x} \\
& =
gh \cdot \left( e_x\right) +
\rho_\mathfrak g \left(  \alpha+ \mathrm{Ad}_g \beta \right)_{gh\cdot x}    \\
& = 
\left(\alpha+\mathrm{Ad}_g\beta,gh\right) \cdot e_x \\
& = \left(\left(\alpha,g\right) \bullet \left(\beta,h\right) \right) \cdot e_x
\end{align*} 
This shows that $\mu$ defines a left action of $TG$.

It remains to show that \eqref{eqn: bijection} is a bijection with inverse defined by \eqref{eqn: reconstruct}. It follows from Lemma \ref{lemma: decomp} that for an affine action $\mu$  mapped to $\left(\mu_G,\rho_\mathfrak g\right)$ by \eqref{eqn: bijection} we have
\begin{align*}
\mu_G \left(g,e_x\right) + \rho_\mathfrak g\left( \left(R_{g^{-1}}\right)_\ast v_g\right)_{g\cdot x}
& = 
\mu\left(0_g,e_x\right) + \mu\left( \left(R_{g^{-1}}\right)_\ast v_g,0_{g\cdot x}\right) \\
& = \mu\left(v_g,e_x\right) 
\end{align*}
Conversely, if $\left(\mu_G,\rho_\mathfrak g\right)$ is a pair satisfying $\left(\star\right),\left(\star\star\right)$, and $\mu$ is the affine action defined by \eqref{eqn: reconstruct}, then
\begin{align*}
\mu\left(0_g,e_x\right) 
& = \mu_G\left(g,e_x\right) + \rho_\mathfrak g\left( \left(R_{g^{-1}}\right)_\ast 0_g\right)_{g \cdot x} \\
& = \mu_G\left(g,e_x\right) + 0_{g\cdot x} \\
& = \mu_G\left(g,e_x\right)  
\end{align*}
and
\begin{align*}
\mu\left(\alpha,0_x\right) 
& =  \mu_G\left(e,0_x\right) + \rho_\mathfrak g\left( \left(R_{e}\right)_\ast \alpha\right)_{e \cdot x} \\
& = 0_x + \rho_\mathfrak g\left(\alpha\right)_x \\
& = \rho_\mathfrak g \left(\alpha\right)_x
\end{align*}
\end{proof}


\begin{remark}
It follows from Lemma \ref{lemma: decomp} and the proof of Proposition \ref{prop: decomp} that if one transports an affine action $\mu:TG\times E \to E$ from $TG$ to $\mathfrak g \rtimes G$ via the Lie group isomorphism $v_g \mapsto \left(g,\left(R_{g^{-1}}\right)_\ast v_g\right)$ then
 \begin{align}
\label{eqn: action of Gg2}
\left(\alpha,g\right) \cdot e_x 
& =  g \cdot e_x + \rho_\mathfrak g \left(\alpha\right)_{g\cdot x}  \\
& = \mu\left(0_g,e_x\right) + \mu \left(\alpha,0_{g\cdot x}\right)
\end{align} 
In particular; $g \cdot e_x = \mu\left(0_g,e_x\right)$ and $
\alpha \cdot e_x  = e_x + \mu\left(\alpha,0_x\right)$, so that elements of $\mathfrak g$ act as fibrewise affine linear transformations. This motivates the name \emph{affine action}.
\end{remark}

\begin{proposition}
\label{prop: decomp2}
Let $E\to X$ be a $G$-equivariant vector bundle with $G$-action $\mu_G:G\times E \to E$. There is a bijection 
\begin{align}
\nonumber
& \left\{\rho_\mathfrak g: \mathfrak g \to \Gamma\left(E\right) \; | \;
\rho_\mathfrak g \text{ is a } G \text{-equivariant linear map} \right\} \\
\label{eqn: bijection2}
\xrightarrow{\cong} & 
\left\{\phi:\mathfrak g_X  \to E \; | \;  \phi \text{ is a morphim of } G \text{-equivariant vector bundles}\right\}
\end{align}

The bijection \eqref{eqn: bijection2} maps $\rho_\mathfrak g : \mathfrak g \to \Gamma\left(E\right)$ to the morphism $\phi:\mathfrak g_X \to E$ defined by
\begin{align*}
\phi\left(x,\alpha\right) & = \rho_\mathfrak g\left(\alpha\right)_x
\end{align*}
\end{proposition}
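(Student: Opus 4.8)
The plan is to recognise this as the special case $V=\mathfrak g$ (the adjoint representation) of the natural bijection \eqref{eqn: hom(VX,E)}, since by definition $\mathfrak g_X = V_X$ for $V=\mathfrak g$, and then to check that the explicit formula in the statement is precisely the inverse of the map $\phi\mapsto\bigl(v\mapsto(x\mapsto\phi(x,v))\bigr)$ appearing there. So it suffices to verify that \eqref{eqn: hom(VX,E)} restricts/specialises correctly; equivalently, I would simply prove the bijection directly by exhibiting the inverse map, which is short enough to include for self-containedness.

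First I would check that the assignment $\rho_\mathfrak g\mapsto\phi$, with $\phi(x,\alpha)=\rho_\mathfrak g(\alpha)_x$, is well defined, i.e. lands in $\mathrm{Hom}_{\mathrm{Vect}_G(X)}(\mathfrak g_X,E)$. Smoothness of $\phi$ follows from finite-dimensionality of $\mathfrak g$: choosing a basis $\alpha_1,\dots,\alpha_n$, linearity of $\rho_\mathfrak g$ gives $\phi\bigl(x,\sum_i c_i\alpha_i\bigr)=\sum_i c_i\,\rho_\mathfrak g(\alpha_i)_x$, which is smooth in $(x,c_1,\dots,c_n)$ because each $\rho_\mathfrak g(\alpha_i)\in\Gamma(E)$ is a smooth section. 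By construction $\pi_E\circ\phi$ is the projection $\mathfrak g_X\to X$, and $\phi$ is fibrewise linear because $\rho_\mathfrak g$ is linear; so $\phi$ is a morphism of vector bundles over $X$. Finally, $G$-equivariance of $\phi$ unwinds, using the action $g\cdot(x,\alpha)=(g\cdot x,\mathrm{Ad}_g\alpha)$ on $\mathfrak g_X$ and $(g\cdot\xi)_x=g\cdot\xi_{g^{-1}\cdot x}$ on $\Gamma(E)$, to exactly the condition $\rho_\mathfrak g(\mathrm{Ad}_g\alpha)=g\cdot\rho_\mathfrak g(\alpha)$ in $\Gamma(E)$, i.e. to the $G$-equivariance of $\rho_\mathfrak g$ — this is essentially the same computation as the one carried out for $\rho_\mathfrak g$ in the proof of Proposition \ref{prop: decomp}.

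Next I would construct the inverse: given a $G$-equivariant vector bundle morphism $\phi:\mathfrak g_X\to E$, set $\rho_\mathfrak g(\alpha)=\bigl(x\mapsto\phi(x,\alpha)\bigr)$. This is a smooth section of $E$ since $\phi$ is smooth and covers $\mathrm{id}_X$; it depends linearly on $\alpha$ since $\phi$ is fibrewise linear; and running the equivariance computation of the previous paragraph in reverse shows $\rho_\mathfrak g$ is $G$-equivariant. The two assignments are mutually inverse by inspection of the formulas $\phi(x,\alpha)=\rho_\mathfrak g(\alpha)_x$ and $\rho_\mathfrak g(\alpha)_x=\phi(x,\alpha)$, which establishes the bijection.

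I do not expect any real obstacle here: the only points requiring a little care are the smoothness of $\phi$ (handled by picking a basis of the finite-dimensional space $\mathfrak g$) and the bookkeeping of the two $G$-actions when translating "$\phi$ is $G$-equivariant" into "$\rho_\mathfrak g$ is $G$-equivariant"; both are routine and parallel to arguments already made in the proof of Proposition \ref{prop: decomp}.
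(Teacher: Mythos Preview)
Your proposal is correct and follows exactly the paper's approach: the paper's proof is the one-line observation that this is the bijection \eqref{eqn: hom(VX,E)} specialised to $V=\mathfrak g$, which is precisely what you identify at the outset. The additional direct verification you supply (smoothness via a basis, fibrewise linearity, and the equivariance bookkeeping) is more detail than the paper gives but is all routine and correct.
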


\begin{proof}
This follows from the bijection \eqref {eqn: hom(VX,E)} applied to the $G$-representation $\mathfrak g$ and the $G$-equivariant vector bundle $E$.
\end{proof}

\begin{proposition}
\label{prop: decomp homs}
Let $\mu:TG\times E\to E$ and $\mu':TG\times E' \to E'$ be affine actions corresponding to pairs $\left(\mu_G,\rho_\mathfrak g\right),\left(\mu_G,\phi\right)$ and $\left(\mu'_G,\rho'_\mathfrak g\right),\left(\mu'_G,\phi'\right)$ under the bijections of Propositions \ref{prop: decomp} and \ref{prop: decomp2}. Let $\psi : E\to E'$ be a morphism of vector bundles over $X$. The following are equivalent:
\begin{enumerate}
\item $\psi$ is $TG$-equivariant.
\item $\psi$ is $G$-equivariant and $\Gamma\left(\psi\right) \circ \rho_{\mathfrak g} = \rho'_\mathfrak g$.
\item $\psi$ is $G$-equivariant and $\psi \circ \phi = \phi'$.
\end{enumerate}
\end{proposition}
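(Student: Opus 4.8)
The plan is to prove the chain of equivalences by first establishing $(1) \Leftrightarrow (2)$, then $(2) \Leftrightarrow (3)$, reducing everything to the decomposition results already in hand. Throughout I will transport the affine actions $\mu, \mu'$ to actions of $\mathfrak g \rtimes G$ via the Lie group isomorphism of section \ref{subsec: TG}, so that, by \eqref{eqn: action of Gg2}, the action of $(\alpha, g)$ on $e_x$ is $g \cdot e_x + \rho_\mathfrak g(\alpha)_{g \cdot x}$, and similarly for $\mu'$. Since $\psi$ is a morphism of vector bundles over $X$, it covers $\mathrm{id}_X$, so the only condition to analyze is compatibility with the $TG$-action, i.e. $\psi\bigl(\mu(v_g, e_x)\bigr) = \mu'\bigl(v_g, \psi(e_x)\bigr)$ for all $v_g \in T_g G$ and $e_x \in E_x$.

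For $(1) \Rightarrow (2)$: assuming $\psi$ is $TG$-equivariant, restrict the equivariance condition along the Lie group homomorphism $0_{TG} : G \to TG$ (used in the proof of Proposition \ref{prop: decomp}) to get that $\psi$ is $G$-equivariant; then restrict along $\mathfrak g = T_e G \hookrightarrow TG$ and use \eqref{eqn: decomp2} together with the fact that $\psi$ covers $\mathrm{id}_X$ and is fibrewise linear to obtain $\psi\bigl(\rho_\mathfrak g(\alpha)_x\bigr) = \rho'_\mathfrak g(\alpha)_x$ for all $x$, which is exactly $\Gamma(\psi) \circ \rho_\mathfrak g = \rho'_\mathfrak g$. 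For $(2) \Rightarrow (1)$: given $G$-equivariance and $\Gamma(\psi) \circ \rho_\mathfrak g = \rho'_\mathfrak g$, apply $\psi$ to the reconstruction formula \eqref{eqn: reconstruct}, using that $\psi$ is fibrewise linear to split the two summands, $G$-equivariance on the first summand $\mu_G(g, e_x)$, and the hypothesis on $\rho_\mathfrak g$ (noting $\rho'_\mathfrak g(\alpha)_{g\cdot x} = \psi\bigl(\rho_\mathfrak g(\alpha)_{g \cdot x}\bigr)$ since $\psi$ covers $\mathrm{id}_X$) on the second; this recovers $\mu'\bigl(v_g, \psi(e_x)\bigr)$ by \eqref{eqn: reconstruct} applied to $\mu'$.

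The equivalence $(2) \Leftrightarrow (3)$ is essentially formal: under the bijection of Proposition \ref{prop: decomp2}, $\phi(x, \alpha) = \rho_\mathfrak g(\alpha)_x$ and $\phi'(x, \alpha) = \rho'_\mathfrak g(\alpha)_x$, so $\psi \circ \phi = \phi'$ as morphisms $\mathfrak g_X \to E'$ means precisely $\psi\bigl(\rho_\mathfrak g(\alpha)_x\bigr) = \rho'_\mathfrak g(\alpha)_x$ for all $x \in X$ and $\alpha \in \mathfrak g$, i.e. $\Gamma(\psi) \circ \rho_\mathfrak g = \rho'_\mathfrak g$. (Here one should note that $\psi$, being a morphism over $X$, acts on $\mathfrak g_X = X \times \mathfrak g \to E'$ by $\psi \circ \phi (x, \alpha) = \psi(\phi(x,\alpha))$ fibrewise, matching the section evaluation.) So $(2)$ and $(3)$ say the same thing once the common hypothesis of $G$-equivariance is granted.

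I do not expect a serious obstacle here; the only point requiring a little care is bookkeeping the identification of $TG$ with $\mathfrak g \rtimes G$ and keeping track of which base-point ($x$ versus $g \cdot x$) each section is evaluated at when pushing through $\psi$ — but since $\psi$ covers the identity on $X$, these evaluations are preserved and the computation in the $(2) \Rightarrow (1)$ direction goes through by directly substituting into \eqref{eqn: reconstruct}. The mild subtlety, if any, is confirming that $G$-equivariance genuinely follows from $TG$-equivariance and is not an independent hypothesis — but this is immediate from restricting along the subgroup $0_{TG}(G) \subset TG$, exactly as in the proof of Proposition \ref{prop: decomp}.
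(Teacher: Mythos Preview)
Your proposal is correct and follows essentially the same approach as the paper's own proof. The paper writes the $TG$-equivariance condition as a single equation in $\mathfrak g \rtimes G$ coordinates and specializes (setting $\alpha=0$, then $g=e$ and $e_x=0$) to extract the two conditions in (2), whereas you separate $(1)\Leftrightarrow(2)$ into two implications using restriction along $0_{TG}$ and $\mathfrak g\hookrightarrow TG$ for one direction and the reconstruction formula \eqref{eqn: reconstruct} for the other; this is the same argument with only cosmetic differences, and your treatment of $(2)\Leftrightarrow(3)$ is identical to the paper's.
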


\begin{proof}
$\left(1\Leftrightarrow 2\right)$.
Via the isomorphism $TG \cong  \mathfrak g\rtimes G$, a map $\psi:E\to E'$ is $TG$-equivariant if and only if it is $\mathfrak g\rtimes G$-equivariant for the action \eqref{eqn: action of Gg2}. If $\left(\alpha,g\right) \in \mathfrak g\rtimes G$ then
\begin{align*}
\psi \left( \left(\alpha,g\right) \cdot e_x \right)  
& =
\psi \left( g \cdot e_x + \rho_\mathfrak g(\alpha)_{g\cdot x} \right) \\
& =
 \psi\left(g \cdot e_x\right) + \psi \left(  \rho_\mathfrak g(\alpha)_{g\cdot x}\right)  
\end{align*}
and
\begin{align*}
\left(\alpha,g\right) \cdot \psi \left(e_x\right)  
& =  g \cdot \psi\left(e_x\right) + \rho_\mathfrak g'(\alpha)_{g \cdot x} 
\end{align*}
Therefore, $\psi$ is $TG$-equivariant if and only if
\begin{align}
\label{eqn: morphism condition}
\psi\left(g \cdot e_x\right) + \psi \left(  \rho_\mathfrak g(\alpha)_{g\cdot x}\right) 
& = 
g \cdot \psi\left(e_x\right) + \rho_\mathfrak g'(\alpha)_{g \cdot x} 
\end{align}
Setting first $\alpha = 0$, and then $g=e$ and $e_x = 0$, one sees that \eqref{eqn: morphism condition} is equivalent to the two equations
\begin{align}
\label{eqn: morphism condition 1}
\psi \left( g \cdot e_x\right) & = g \cdot \psi\left(e_x\right) \\
\label{eqn: morphism condition 2}
\psi\left( \rho_\mathfrak g (\alpha) \right) & = \rho_\mathfrak g' (\alpha)
\end{align}
Equation \eqref{eqn: morphism condition 1} is the condition that $\psi$ is $G$-equivariant, and \eqref{eqn: morphism condition 2} is the condition that $\Gamma\left(\psi\right) \circ \rho_\mathfrak g = \rho_\mathfrak g'$.

$\left(2\Leftrightarrow 3\right)$. If $\alpha \in \mathfrak g$ and $x \in X$ then
\[
\left(\Gamma\left(\psi\right) \circ \rho_\mathfrak g \left(\alpha\right) \right)_x = \psi\left(\rho_\mathfrak g \left(\alpha\right)_x\right) = \psi \left(\phi\left(x,\alpha\right)\right)
\]
and
\[
\rho'_\mathfrak g\left(\alpha\right)_x = \phi'\left(x,\alpha\right)
\]
Therefore, $\Gamma\left(\psi\right) \circ \rho_\mathfrak g = \rho'_\mathfrak g$ if and only if $\psi \circ \phi = \phi'$.
\end{proof}

\subsection{The category of affine actions}
\label{subsec: the category}

Recall that for a fixed object $m$ in a category $\mathscr C$, the \emph{over-slice category} $m\backslash \mathscr C$ is defined as follows: 
\begin{itemize}
\item the objects are pairs $\left(a,\phi\right)$, where $a$ is an object in $\mathscr C$ and $\phi:m\to a$ is a morphism in $\mathscr C$,
\item the morphisms $\left(a,f\right) \to \left(a',f'\right)$ are morphisms $\chi:a \to a'$ in $\mathscr C$ such that $\chi\circ f = f'$:
\[
\xymatrix{
m \ar[r]^{f}  \ar[dr]_{f'}  &   a \ar[d]^{\chi} \\
&  a'
}
\]
\item the composition of morphisms is induced from that of $\mathscr C$.
\end{itemize}
There is a canonical faithful forgetful functor $m\backslash \mathscr C \to \mathscr C$ defined by $\left(a,f\right) \mapsto a$ and $\left(\chi:\left(a,f\right) \to \left(a',f'\right)\right) \mapsto \left(\chi:a \to a'\right)$.

\begingroup
\setcounter{mainthm}{0} 
\renewcommand\themainthm{\Alph{mainthm}}
\begin{mainthm}
\label{thm: A}
The following three categories are isomorphic:
\begin{enumerate}
\item The category $\mathrm{Vect}_{TG}^\mathrm{aff}\left(X\right)$ of affine actions of $TG$ over $X$.
\item The category of pairs $\left(E,\rho_\mathfrak g\right)$, defined as follows: 
\begin{itemize}
\item The objects are pairs $\left(E,\rho_\mathfrak g\right)$, where $E$ is a $G$-equivariant vector bundle over $X$ and $\rho_\mathfrak g : \mathfrak g \to \Gamma\left(E\right)$ is a $G$-equivariant linear map. 
\item The morphisms $\left(E,\rho_\mathfrak g\right) \to \left(E',\rho_\mathfrak g\right)$ are morphisms $\psi:E\to E'$ of $G$-equivariant vector bundles over $X$ such that $\Gamma\left(\psi\right)\circ \rho_\mathfrak g = \rho'_\mathfrak g$.
\item Composition is given by composition of morphisms of vector bundles over $X$.
\end{itemize}
\item The slice category $ \mathfrak g_X \backslash \mathrm{Vect}_G\left(X\right)$.
\end{enumerate}
\end{mainthm}
\endgroup

\begin{proof}
The bijections of Propositions \ref{prop: decomp} and \ref{prop: decomp2} provide bijections between the classes of objects of each of these three categories. The bijections of Proposition \ref{prop: decomp homs} provide bijections between hom-sets. These bijections are functorial because the composition of morphisms in all three categories is given by the composition of morphisms of vector bundles over $X$.
\end{proof}

\begin{remark}
For the remainder of the paper we shall use the isomorphisms of Theorem \ref{thm: A} implicitly. We shall reserve the notation $\left(E,\rho_\mathfrak g\right)$ and $\left(E,\phi\right)$ for objects in the second and third categories in Theorem \ref{thm: A} respectively, and refer to either as a \emph{pair}. In particular, Theorem \ref{thm: A} shows that associated to every affine action $TG\times E \to E$ is an `underlying'  $G$-vector bundle $E$, equal to the $E$ in either of the associated pairs. Explicitly, the underlying $G$-action $\mu_G$ is defined by the formula \eqref{eqn: decomp1} and is equal to the restriction of $\mu$ along the Lie group homomorphism $0_{TG}:G \to TG$.

In section \ref{sec: examples} we give a number of examples, some phrased in terms of affine actions and some in terms of pairs. In sections \ref{sec: functors} and \ref{sec: K} we work mostly with the category $\mathfrak g_X \backslash \mathrm{Vect}_G\left(X\right)$, but where it is easy to do so describe the corresponding statements in terms of affine actions - see Remarks \ref{rem: UforTG}, \ref{rem: analogue thmB} and \ref{rem: TGpullback}, and Corollaries \ref{cor: TGterminal} and \ref{cor: TGproduct}.
\end{remark}

\section{Examples and special cases}
\label{sec: examples}

\subsection{Examples}
\label{subsection: examples}

\begin{example}
\textbf{The tangent bundle.}
The composite map
\[
TG \times TX \xrightarrow{\cong} T\left(G\times X\right) \xrightarrow{t_\ast} TX
\]
is an affine action on $TX$. The corresponding pair $\left(\mu_G,i_\mathfrak g\right)$ is given by $g \cdot w  = \left(t_g\right)_\ast \left(w\right)$ and $i_\mathfrak g \left(\alpha\right) = \alpha^\#$, where $\alpha^\sharp \in \Gamma\left(TX\right)$ is the induced vector field associated to $\alpha$, defined by 
\[
\alpha^\sharp_x = t_\ast\left(\alpha,0_x\right)=\tfrac{d}{dt}|_{t=0}\left(\mathrm{exp}\left(t\alpha\right)\cdot x\right)
\] 
In terms of $\mathfrak g \rtimes G$ we have
\[
\left(\alpha,g\right) \cdot w_x = \left(t_g\right)_\ast w_x + \alpha^\#_{g\cdot x}
\]
\end{example}

\begin{example}
\label{example: equivariant bundles}
\textbf{Equivariant vector bundles.} 
If $E$ is a $G$-equivariant vector bundle with $G$-action $\mu_G : G \times E \to E$ then the composite map
\begin{align*}
TG \times E \xrightarrow{\pi_G \times \mathrm{id}} G \times E \xrightarrow{\mu_G} E
\end{align*}
is an affine action. The corresponding pair is $\left(\mu_G,0\right)$. In terms of $\mathfrak g \rtimes G$ we have
\[
\left(\alpha,g\right) \cdot e_x = g \cdot e_x
\]
\end{example}

\begin{example}
\label{example: action algebroid}
\textbf{The $G$-bundle $\mathfrak g_X$.}
The $G$-vector bundle $\mathfrak g_X$ defines a canonical object $\left(\mathfrak g_X,\mathrm{id}\right)$
in the category $\mathfrak g_X\backslash \mathrm{Vect}_G\left(X\right)$. The corresponding affine action is given by
\[
v_g\cdot \left(x,\beta\right) 
=
\left(g\cdot x,\mathrm{Ad}_g \beta + \left(R_{g^{-1}}\right)_\ast v_g\right)
\]
In terms of $\mathfrak g \rtimes G$ we have
\[
\left(\alpha,g\right) \cdot \left(x,\beta\right) 
=
\left(g\cdot x,\mathrm{Ad}_g \beta + \alpha \right)
\]
\end{example}

\begin{example}
\textbf{$G$-modules.}
As a generalization of Example \ref{example: action algebroid}, suppose that $M$ is a finite dimensional real $G$-representation and $\bar\phi:\mathfrak g\to M$ is a $G$-map. Then $\bar\phi$ extends to a constant morphism of $G$-vector bundles $\phi: \mathfrak g_X \to M_X$ and $\left(M_X,\phi\right)$ is an object in $\mathfrak g_X \backslash \mathrm{Vect}_G\left(X\right)$. The corresponding affine action is given by
\[
v_g \cdot \left(x,m\right) = \left(g\cdot x, g\cdot m + \bar\phi\left( \left(R_{g^{-1}}\right)_\ast v_g \right) \right)
\]
In terms of $\mathfrak g \rtimes G$ we have
\[
\left(\alpha,g\right) \cdot \left(x,m\right) = \left(g\cdot x, g \cdot m + \bar\phi\left(\alpha\right) \right)
\]
This construction extends to a functor
\[
\mathfrak g \backslash \mathrm{Rep}\left(G\right)
\to
\mathfrak g_X \backslash \mathrm{Vect}_G\left(X\right) 
\cong
\mathrm{Vect}_{TG}^{\mathrm{aff}}\left(X\right)
\]
which coincides with the pullback functor $\mathrm{Vect}_{TG}^{\mathrm{aff}}\left(\ast\right) \to \mathrm{Vect}_{TG}^{\mathrm{aff}}\left(X\right)$ associated to the $G$-map $X\to \ast$ (see Definition \ref{def: pullback} in section \ref{subsec: pullback functors} below).
\end{example}

\begin{example}
\label{ex: Liealg}
\textbf{Equivariant Lie algebroids.}
Recall that a \emph{Lie algebroid} is a vector bundle $A\to X$ equipped with an $\mathbb R$-linear Lie bracket on $\Gamma\left(A\right)$ and a vector bundle morphism $\rho: A\to TX$, such that the Leibniz rule $\left[\xi,f\xi'\right]=\rho\left(\xi\right) \left(f\right)\xi' + f\left[\xi,\xi'\right]$ is satisfied for all $\xi,\xi'\in \Gamma\left(A\right)$ and $f\in C^\infty\left(X\right)$. See \cite{mackenzie} for more details. 

A  $G$-\emph{equivariant Lie algebroid} (called a \emph{Harish-Chandra Lie algebroid} in \cite{bb}) is a Lie algebroid $A\to X$ equipped with an action $G\times A \to A$ of $G$ on $A$ by Lie algebroid automorphisms and a $G$-equivariant Lie algebra morphism $i_\mathfrak g: \mathfrak g \to \Gamma\left(A\right)$ such that $\alpha \cdot \xi = \left[i_\mathfrak g\left(\alpha\right), \xi\right]$ for all $\alpha \in \mathfrak g$ and $\xi \in \Gamma\left(A\right)$, where $\xi \mapsto \alpha \cdot \xi$ is the action of $\mathfrak g$ on $\Gamma\left(A\right)$ given by the derivative of the action of $G$. Variants of this notion have appeared in \cite{alekseev},\cite{bruzzo},\cite{marrero},\cite{ginzburg}.

By forgetting the Lie brackets, every $G$-equivariant Lie algebroid gives rise to a pair $\left(A,i_\mathfrak g\right)$ and therefore to an affine action of $TG$. Affine actions of this type were considered in \cite{marrero}.
\end{example}

\begin{example}
\textbf{Affine actions on $\mathbb R_X$}
Let $\mathbb R_X$ be the $G$-vector bundle associated to the trivial representation of $G$. There is a bijection between affine actions with underlying $G$-vector bundle $\mathbb R_X$, and the space $C^\infty\left(X,\mathfrak g^\ast\right)^G$. This follows from the bijection \eqref{eqn: hom(VX,E)} and the isomorphism $\mathrm{Hom}\left(\mathfrak g,\mathbb R\right) \cong \mathfrak g^\ast$ of $G$-representations:
\[
\mathrm{Hom}_{\mathrm{Vect}_G\left(X\right)}\left(\mathfrak g_X,\mathbb R_X\right)
\cong
C^\infty\left(X, \mathrm{Hom}\left(\mathfrak g,\mathbb R\right) \right)^G
\cong
C^\infty\left(X,\mathfrak g^\ast\right)^G
\]
Given a function $f \in C^\infty\left(X,\mathfrak g^\ast\right)^G$ the corresponding affine action is given by
\[
v_g \cdot \left(x,\lambda\right) = \left(g\cdot x, \lambda + f_x\left( \left(R_{g^{-1}}\right)_\ast v_g \right) \right)
\]
In terms of $\mathfrak g \rtimes G$ we have
\[
\left(\alpha,g\right) \cdot \left(x,\lambda\right) = \left(g\cdot x, \lambda + f_x\left(\alpha\right) \right)
\]
\end{example}

\begin{example}
\label{ex: products}
\textbf{Products.} If $\left(E,\phi\right)$ and $\left(E',\phi'\right)$ are objects in $\mathfrak g_X\backslash \mathrm{Vect}_G\left(X\right)$ then so is $\left(E\oplus E',\left(\phi,\phi'\right)\right)$. In fact, 
\[
\left(E\oplus E',\left(\phi,\phi'\right)\right) = \left(E,\phi\right) \times \left(E',\phi'\right),
\] 
where the right hand side is the product of $\left(E,\phi\right)$ and $\left(E',\phi'\right)$ in the category $\mathfrak g_X\backslash\mathrm{Vect}_G\left(X\right)$ (see section \ref{subsec: products}). The corresponding affine action is the diagonal action 
\[
v_g\cdot \left(e_x,e'_x\right) = \left(v_g\cdot e_x,v_g\cdot e'_x\right)
\]
In terms of $\mathfrak g \rtimes G$ we have
\[
v_g\cdot \left(e_x,e'_x\right) = \left(g\cdot e_x+\rho_\mathfrak g\left(\alpha\right)_x,g\cdot e'_x + \rho'_\mathfrak g\left(\alpha\right)_x\right)
\]

\end{example}

\subsection{Special cases}

\begin{example}
\textbf{Discrete groups.}
If $G$ is a discrete group then $TG \cong G$, and affine actions are the same as equivariant vector bundles.
\end{example}

\begin{example}
\textbf{Tori.}
Suppose that $G$ is one dimensional and abelian. In this case the adjoint representation is one dimensional and trivial.
This implies that for every $G$-equivariant vector bundle $E$ there is an isomorphism
$\mathrm{Hom}_G\left(\mathfrak g,\Gamma\left(E\right)\right) 
 \cong \Gamma\left(E\right)^G$. Affine actions are therefore equivalent to pairs $\left(E,\xi\right)$, where $E$ is a $G$-equivariant vector bundle and $\xi$ is a $G$-invariant section of $E$. 
\end{example}

\begin{example}
\label{example: X=point}
\textbf{Points.}
If $X=\ast$ is a point then $\mathfrak g_X \backslash \mathrm{Vect}_G\left(X\right) = \mathfrak g\backslash \mathrm{Rep}\left(G\right)$.
\end{example}

Assuming that $G$ is simple and compact, one can use the description of products of Example \ref{ex: products} to describe the objects in $\mathfrak g \backslash \mathrm{Rep}\left(G\right)$ explicitly:

\begin{proposition}
\label{prop: K(point)}
If $G$ is a simple compact Lie group and $\left(V,\phi\right)$ is an object in $\mathfrak g \backslash \mathrm{Rep}\left(G\right)$ then either:
\begin{enumerate}
\item 
$\left(V,\phi\right) = \left(V,0\right)$, or
\item 
$\left(V,\phi\right) \cong \left(\mathfrak g,\mathrm{id}\right)^n \times \left(W,0\right)$
\end{enumerate} 
where $n \in \mathbb N$ and $W$ is a finite dimensional $G$-representation with no summand isomorphic to $\mathfrak g$. 
\end{proposition}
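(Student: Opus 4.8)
The plan is to exploit the semisimplicity of $\mathrm{Rep}\left(G\right)$ together with the irreducibility of the adjoint representation, reducing the statement to a small piece of linear algebra over the division algebra $\mathrm{End}_G\left(\mathfrak g\right)$, which for a simple compact $G$ equals $\mathbb R$ (the adjoint representation is irreducible and, carrying the invariant Killing form, is of real type).

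First I would decompose $V$, as a $G$-representation, as $V \cong \mathfrak g^{\oplus m} \oplus W$ where $W$ has no summand isomorphic to $\mathfrak g$; this is possible because $G$ is compact, and the integer $m \ge 0$ and the isomorphism class of $W$ are determined by $V$. Since $\mathfrak g$ is irreducible, $\mathrm{Hom}_G\left(\mathfrak g, W\right) = 0$, so composing $\phi : \mathfrak g \to V$ with the projection onto $W$ gives $0$, while composing with the projection onto the $i$-th copy of $\mathfrak g$ gives an element of $\mathrm{End}_G\left(\mathfrak g\right) = \mathbb R \cdot \mathrm{id}_{\mathfrak g}$, say $c_i\,\mathrm{id}_{\mathfrak g}$. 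Hence, with respect to the chosen decomposition, $\phi\left(\alpha\right) = \left(c_1\alpha, \dots, c_m\alpha, 0\right)$ for a vector $c = \left(c_1, \dots, c_m\right) \in \mathbb R^m$.

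If $c = 0$ then $\phi = 0$ and we are in case (1). If $c \ne 0$ (so in particular $m \ge 1$), then since $\mathrm{GL}_m\left(\mathbb R\right)$ acts transitively on nonzero vectors of $\mathbb R^m$ I would choose $A \in \mathrm{GL}_m\left(\mathbb R\right)$ with $Ac = \left(1, \dots, 1\right)$. Replacing each scalar entry $a_{ij}$ of $A$ by $a_{ij}\,\mathrm{id}_{\mathfrak g}$ defines a $G$-automorphism $\widetilde A$ of $\mathfrak g^{\oplus m}$; set $\Psi = \widetilde A \oplus \mathrm{id}_W$, a $G$-automorphism of $V$. By construction $\Psi \circ \phi = \left(\Delta, 0\right)$, where $\Delta : \mathfrak g \to \mathfrak g^{\oplus m}$ is the diagonal, so $\Psi$ is an isomorphism $\left(V, \phi\right) \xrightarrow{\cong} \left(\mathfrak g^{\oplus m} \oplus W, \left(\Delta, 0\right)\right)$ in $\mathfrak g \backslash \mathrm{Rep}\left(G\right)$ (a morphism of a slice category is an isomorphism iff its underlying morphism is). Finally, by the description of products in the slice category (Example \ref{ex: products}), $\left(\mathfrak g, \mathrm{id}\right)^m = \left(\mathfrak g^{\oplus m}, \Delta\right)$, so $\left(\mathfrak g, \mathrm{id}\right)^m \times \left(W, 0\right) = \left(\mathfrak g^{\oplus m} \oplus W, \left(\Delta, 0\right)\right)$, giving case (2) with $n = m$.

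The only genuinely non-formal input is the identification $\mathrm{End}_G\left(\mathfrak g\right) = \mathbb R$, i.e. that the adjoint representation of a simple compact Lie group is irreducible of real type; everything else is semisimplicity of $\mathrm{Rep}\left(G\right)$, Schur's lemma, and the transitivity of $\mathrm{GL}_m\left(\mathbb R\right)$ on nonzero vectors. (In fact the argument goes through verbatim with $\mathbb R$ replaced by any division algebra, so even this input is not strictly necessary.) I do not anticipate a serious obstacle; the main care required is bookkeeping — fixing one decomposition of $V$ throughout, and checking that both $\Psi$ and $\Psi^{-1}$ are morphisms of the slice category.
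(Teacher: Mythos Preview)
Your proposal is correct and follows essentially the same route as the paper: decompose $V \cong \mathfrak g^{\oplus m} \oplus W$ by complete reducibility, use absolute irreducibility of $\mathfrak g$ (so $\mathrm{End}_G(\mathfrak g)\cong\mathbb R$) and Schur's Lemma to identify $\mathrm{Hom}_G(\mathfrak g,V)$ with $\mathbb R^m$, and then invoke the transitivity of $\mathrm{GL}_m(\mathbb R)$ on nonzero vectors to normalise $\phi$ to the diagonal. The paper phrases the last step as classifying isomorphism classes via the $\mathrm{GL}_m(\mathbb R)$-orbits on $\mathbb R^m$, but the content is identical.
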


\begin{remark}
\label{rem: K(point)}
Note that in the setting of Proposition \ref{prop: K(point)}, if  $\left(V,\phi\right) = \left(V,0\right)$ then $\left(V,\phi\right) \cong \left(\mathfrak g,0\right)^n \times \left(W,0\right)$ for some $n\in \mathbb Z^{\geq 0}$ and $W$ a finite dimensional $G$-representation with no summand isomorphic to $\mathfrak g$. 
\end{remark}

\begin{proof}
The proof is an application of complete reducibility ($G$ is compact) and Schur's Lemma ($\mathfrak g$ is irreducible as a $G$-representation because $G$ is simple). Note that as $G$ is simple and compact the adjoint representation $\mathfrak g$ of $G$ is absolutely irreducible (\cite{onishchik}). In particular, $\mathrm{End}_G\left(\mathfrak g\right)  \cong \mathbb R$.

Let $\left(V,\phi\right)$ be an object in $\mathfrak g \backslash \mathrm{Rep}\left(G\right)$. As a $G$-representation $V \cong \mathfrak g^{\oplus n} \oplus W$ for some non-negative integer $n$ and $G$-representation $W$ with no summand isomorphic to $\mathfrak g$. It follows from Schur's Lemma that there is an isomorphism 
\begin{align}
\label{eqn: Schur}
\mathbb R^n & \xrightarrow{\cong}
\mathrm{Hom}_G\left(\mathfrak g, \mathfrak g^{\oplus n} \oplus W\right) \\
\nonumber
\underline \lambda & \mapsto \left(\alpha \mapsto  \left(\lambda_1 \alpha,\dots,\lambda_n\alpha,0\right)\right)
\end{align}
Suppose that $\left(V',\phi'\right) \cong \mathfrak g^{\oplus n'} \oplus W'$ is a second object in $\mathfrak g \backslash \mathrm{Rep}\left(G\right)$ and that under the bijection \eqref{eqn: Schur} $\left(V,\phi\right)$ (respectively $\left(V',\phi'\right)$) corresponds to $\left(\mathfrak g^{\oplus n} \oplus W, \underline \lambda\right)$ (respectively $\left(\mathfrak g^{\oplus n'} \oplus W', \underline \lambda'\right)$). If $\left(V,\phi\right) \cong \left(V',\phi'\right)$ then we necessarily have $n'=n$ and $W'\cong W$. Using Schur's Lemma again, we have a bijection
\begin{align*}
\mathrm{Aut}_G\left(\mathfrak g^{\oplus n}\oplus W\right) & \cong GL_n\left(\mathbb R\right) \times \mathrm{Aut}_G\left(W\right)
\end{align*}
from which it follows that isomorphisms in $\mathfrak g\backslash \mathrm{Rep}\left(G\right)$ from $\left(V,\phi\right)$ to $\left(V',\phi'\right)$ correspond to diagrams
\[
\xymatrix{
\mathfrak g\ar[r]^-{\underline \lambda} 
\ar[dr]_{\underline \lambda'} 
& \mathfrak g^{\oplus n}\oplus W\ar[d]^{\left(A,\psi\right)} \\
& \mathfrak g^{\oplus n}\oplus W
}
\]
in which $\left(A,\psi\right) \in GL_n\left(\mathbb R\right) \times \mathrm{Aut}_G\left(W\right)$ and $A\underline \lambda = \underline \lambda'$. The Proposition now follows from the fact that there are exactly two $GL_n\left(\mathbb R\right)$ orbits in $\mathbb R^n$: the zero orbit $\left\{\underline 0 \right\}$, and its complement $\mathbb R^n \backslash \left\{\underline 0\right\}$.  Under the bijection \eqref{eqn: Schur} the zero vector $\underline 0$ corresponds to the zero morphism $\mathfrak g \to \mathfrak g^{\oplus n}\oplus W$ and therefore to the object $\left(\mathfrak g,0\right)^n \times \left(W,0\right) \cong \left(V,0\right)$, and the vector $\left(1,\cdots,1\right) \in \mathbb R^n \backslash \left\{\underline 0\right\}$ corresponds to the map $\left(\mathrm{diag},0\right):\mathfrak g \to \mathfrak g^{\oplus n}\oplus W$ and therefore to the object $\left(\mathfrak g,\mathrm{id}\right)^n \times \left(W,0\right)$.
\end{proof}

\begin{remark}
\label{rem: 0=1}
Surprisingly, in the Grothendieck group $K_{TG}^{\mathrm{aff}}\left(\ast\right)$ (see Definition \ref{def: KTG}) the objects $\left[\left(\mathfrak g,\mathrm{id}\right)\right]$ and  $\left[\left(\mathfrak g,0\right)\right]$ are in fact equal. See Remark \ref{rem: monoids vs groups} below for a further discussion of this point.
\end{remark}

\begin{example}
\textbf{Trivial $G$-spaces.} 
\label{example: X=trivial G-space}
As a generalization of Example \ref{example: X=point} consider the case where $X$ is a trivial $G$-space. In this case an affine action $TG\times E \to E$ over $X$ is equivalent to a smoothly varying family $TG \times E_x \to E_x$ of affine actions parametrised by $x\in X$. In particular, associated to each point $x\in X$ there is an object $\left(E_x,\phi_x\right)$ in $\mathfrak g\backslash \mathrm{Rep}\left(G\right)$.
\end{example}

\begin{proposition}
\label{example: homog}
\textbf{\emph{Homogeneous $G$-spaces.}}
Let $G/H$ be a homogeneous $G$-space. Then there is an equivalence of categories
\begin{equation}
\label{eqn: G/H}
\mathrm{Vect}_{TG}^\mathrm{aff}\left(G/H\right)
\simeq
\mathfrak g \backslash \mathrm{Rep}\left(H\right)
\end{equation}
where $\mathfrak g$ is considered as an $H$-module by restriction of the adjoint representation of $G$.
\end{proposition}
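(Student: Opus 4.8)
The plan is to use the isomorphism $\mathrm{Vect}_{TG}^{\mathrm{aff}}(G/H) \cong \mathfrak g_{G/H} \backslash \mathrm{Vect}_G(G/H)$ of Theorem \ref{thm: A}, and then invoke the classical equivalence $\mathrm{Vect}_G(G/H) \simeq \mathrm{Rep}(H)$ which sends a $G$-equivariant vector bundle $E \to G/H$ to its fibre $E_{eH}$ over the identity coset, with its residual $H$-action. Under this equivalence, a $G$-equivariant linear map $\rho_\mathfrak g : \mathfrak g \to \Gamma(E)$ of $G$-representations must be transported to the appropriate datum on the $\mathrm{Rep}(H)$ side; the key point is that the $G$-representation $\Gamma(E)$ of sections corresponds, under the standard dictionary, to the induced representation / the $H$-fixed evaluation, but in a way compatible with evaluation at $eH$.

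First I would recall (or briefly prove) the equivalence $F : \mathrm{Vect}_G(G/H) \to \mathrm{Rep}(H)$, $E \mapsto E_{eH}$, with quasi-inverse $V \mapsto G \times_H V$, noting that $F(\mathfrak g_{G/H}) = \mathfrak g$ with its $H$-action by restriction of $\mathrm{Ad}$. Second, I would check that $F$ takes the slice category $\mathfrak g_{G/H} \backslash \mathrm{Vect}_G(G/H)$ to $\mathfrak g \backslash \mathrm{Rep}(H)$. For this I need: given a $G$-bundle morphism $\phi : \mathfrak g_{G/H} \to E$, its fibre $\phi_{eH} : \mathfrak g \to E_{eH}$ is $H$-equivariant, and conversely every $H$-map $\bar\phi : \mathfrak g \to V$ extends uniquely to a $G$-bundle morphism $\mathfrak g_{G/H} = G\times_H \mathfrak g \to G \times_H V$. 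Since $F$ is an equivalence on the underlying categories and is compatible with the distinguished object $\mathfrak g_{G/H} \leftrightarrow \mathfrak g$, it induces an equivalence of the slice categories: an equivalence $\mathscr C \simeq \mathscr D$ carrying $m \mapsto n$ induces $m\backslash \mathscr C \simeq n \backslash \mathscr D$ by $(a,\phi) \mapsto (Fa, F\phi)$ (using that $F$ is full and faithful to transport the slice-morphism condition). Third, I would combine these:
\[
\mathrm{Vect}_{TG}^{\mathrm{aff}}(G/H) \cong \mathfrak g_{G/H}\backslash \mathrm{Vect}_G(G/H) \simeq \mathfrak g \backslash \mathrm{Rep}(H).
\]

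The main obstacle is making the identification $\phi \leftrightarrow \phi_{eH}$ fully precise, in particular verifying that the $G$-equivariance of $\phi$ as a bundle map corresponds exactly to $H$-equivariance of $\phi_{eH}$ as a linear map, and that this bijection is natural and respects the slice-morphism equations $\Gamma(\psi)\circ\rho_\mathfrak g = \rho'_\mathfrak g$ (equivalently $\psi\circ\phi = \phi'$). This is essentially the standard equivalence $\mathrm{Hom}_{\mathrm{Vect}_G(G/H)}(V_{G/H}^{\mathrm{ind}}, E) \cong \mathrm{Hom}_H(V, E_{eH})$ unwound via Frobenius reciprocity for the induced bundle $\mathfrak g_{G/H} = G\times_H \mathfrak g$; once one has set up the equivalence $\mathrm{Vect}_G(G/H)\simeq\mathrm{Rep}(H)$ carefully, the rest is formal. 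Note this specializes to Example \ref{example: X=point} when $H = G$.
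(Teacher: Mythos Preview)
Your proposal is correct and follows essentially the same route as the paper: use Theorem~\ref{thm: A} to replace $\mathrm{Vect}_{TG}^{\mathrm{aff}}(G/H)$ by $\mathfrak g_{G/H}\backslash \mathrm{Vect}_G(G/H)$, observe that the classical equivalence $\mathrm{Vect}_G(G/H)\simeq \mathrm{Rep}(H)$ sends $\mathfrak g_{G/H}$ to the $H$-module $\mathfrak g$, and conclude that the induced functor on slice categories is an equivalence. The paper's proof is just a terse version of exactly this argument, omitting the verifications you flag as ``obstacles'' (which are indeed formal once the equivalence $\mathrm{Vect}_G(G/H)\simeq \mathrm{Rep}(H)$ is in hand).
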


\begin{proof}
Under the equivalence $\mathrm{Vect}_G\left(G/H\right) \to \mathrm{Rep}\left(H\right)$ the $G$-vector bundle $\mathfrak g_X$ is mapped to the $H$-representation $\mathfrak g$. Therefore
\begin{equation*}
\mathrm{Vect}_{TG}^\mathrm{aff}\left(G/H\right) 
 \simeq 
\mathfrak g_X\backslash \mathrm{Vect}_G\left(X\right) 
 \simeq
\mathfrak g \backslash \mathrm{Rep}\left(H\right)
\end{equation*}
\end{proof}

\begin{example}\textbf{The tangent bundle of $G/H$.}
Under the equivalence \eqref{eqn: G/H} the tangent bundle $T\left(G/H\right)$ corresponds to the quotient map $\mathfrak g \to \mathfrak g/\mathfrak h$ in the category $\mathfrak g \backslash \mathrm{Rep}\left(H\right)$.
\end{example}

\begin{proposition}
\textbf{\emph{Free $G$-spaces.}}
Suppose that $P$ is a free, proper $G$-space so that $P/G$ is a manifold and $P \to P/G$ is a principal $G$-bundle. Then 
\[
\mathrm{Vect}_{TG}^{\mathrm{aff}} \left(P\right)
\simeq
\mathrm{ad}\left(P\right)\backslash \mathrm{Vect}\left(P/G\right)
\]
where $\mathrm{ad}\left(P\right) = P\times_G\mathfrak g$ is the adjoint bundle of $P$.

\end{proposition}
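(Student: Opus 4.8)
The plan is to deduce this from Theorem \ref{thm: A} together with the classical equivalence between $G$-equivariant vector bundles on the total space of a principal bundle and vector bundles on its base. First I would apply Theorem \ref{thm: A} to $X = P$, which gives an isomorphism of categories $\mathrm{Vect}_{TG}^{\mathrm{aff}}\left(P\right) \cong \mathfrak g_P \backslash \mathrm{Vect}_G\left(P\right)$, where $\mathfrak g_P = P \times \mathfrak g$ carries the diagonal $G$-action coming from the adjoint representation.

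Next, since $P$ is a free and proper $G$-space, $P \to P/G$ is a principal $G$-bundle, and there is a well-known equivalence of categories
\[
Q : \mathrm{Vect}_G\left(P\right) \xrightarrow{\ \simeq\ } \mathrm{Vect}\left(P/G\right), \qquad E \longmapsto E/G,
\]
with quasi-inverse given by pullback along the projection $P \to P/G$; local triviality of $E/G$ is obtained from local sections of $P \to P/G$, and properness of the action guarantees that the relevant quotients are smooth manifolds. I would treat this equivalence as known, exactly as Proposition \ref{example: homog} uses the equivalence $\mathrm{Vect}_G\left(G/H\right) \simeq \mathrm{Rep}\left(H\right)$. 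Under $Q$ the object $\mathfrak g_P = P \times \mathfrak g$, with its diagonal action, is sent to $\left(P\times\mathfrak g\right)/G = P\times_G \mathfrak g = \mathrm{ad}\left(P\right)$.

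Finally I would use the elementary fact that an equivalence of categories $\Phi : \mathscr C \to \mathscr D$ induces an equivalence of over-slice categories $m\backslash\mathscr C \simeq \Phi\left(m\right)\backslash\mathscr D$ via $\left(a,\phi\right)\mapsto\left(\Phi a, \Phi\phi\right)$ (essential surjectivity uses the unit of the equivalence to move an object $\left(d,\psi\right)$ of $\Phi\left(m\right)\backslash\mathscr D$ to $\left(\Psi d, \Psi\psi\circ\eta_m\right)$, while fullness and faithfulness are inherited from $\Phi$). Applying this to $Q$ with $m = \mathfrak g_P$ yields $\mathfrak g_P\backslash\mathrm{Vect}_G\left(P\right) \simeq \mathrm{ad}\left(P\right)\backslash\mathrm{Vect}\left(P/G\right)$, and composing with the isomorphism of Theorem \ref{thm: A} gives the claim. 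The only real work is in the descent equivalence $\mathrm{Vect}_G\left(P\right)\simeq\mathrm{Vect}\left(P/G\right)$ in the smooth, non-compact setting — checking smoothness and local triviality of $E/G$ and that the unit and counit are isomorphisms — which is standard associated-bundle theory that I would cite rather than reprove.
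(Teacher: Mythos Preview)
Your proposal is correct and follows essentially the same route as the paper: apply Theorem \ref{thm: A} to identify $\mathrm{Vect}_{TG}^{\mathrm{aff}}(P)$ with $\mathfrak g_P\backslash\mathrm{Vect}_G(P)$, then use the quotient equivalence $\mathrm{Vect}_G(P)\simeq\mathrm{Vect}(P/G)$, under which $\mathfrak g_P$ goes to $\mathrm{ad}(P)$, to pass to $\mathrm{ad}(P)\backslash\mathrm{Vect}(P/G)$. The paper's proof is a terse two-line version of exactly this argument; your additional remarks about slice categories under equivalences and the descent equivalence simply make explicit what the paper leaves implicit.
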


\begin{proof}
The quotient construction 
 $E \mapsto E/G$
yields an equivalence of categories $\mathrm{Vect}_G\left(P\right) \to \mathrm{Vect}\left(P/G\right)$ under which the $G$-vector bundle $\mathfrak g_P$ is mapped to the adjoint bundle $\mathrm{ad}\left(P\right)= P\times_G \mathfrak g$. Therefore
\begin{equation}
\label{eqn: quotient equiv}
\mathrm{Vect}_{TG}^{\mathrm{aff}} \left(P\right)
\simeq
\mathfrak g_P \backslash \mathrm{Vect}_G\left(P\right)
\simeq
\mathrm{ad}\left(P\right)\backslash \mathrm{Vect}\left(P/G\right)
\end{equation}
\end{proof}

\begin{example}
\textbf{Atiyah algebroids.} If $\pi:P \to X$ is a principal $G$-bundle then the \emph{Atiyah algebroid} of $P$ is the Lie algebroid (see Example \ref{ex: Liealg}) $TP/G$ over $X$ (see \cite{mackenzie}). The Atiyah algebroid fits into a short exact sequence
\begin{equation*}
0 \to \mathrm{ad}\left(P\right) \xrightarrow{} TP/G \xrightarrow{} TX \to 0 
\end{equation*}
which arises from an application of the quotient construction to the short exact sequence 
\begin{equation*}
0 \to \mathrm{Ker}\pi_\ast \xrightarrow{} TP \xrightarrow{\pi_\ast} \pi^\ast TX \to 0 
\end{equation*}
and the isomorphism of $G$-equivariant vector bundles $\mathrm{Ker}\pi_\ast \cong \mathfrak g_P$.
Under the equivalence \eqref{eqn: quotient equiv}, the tangent bundle $TP$ corresponds to the object $\left(TP/G, \mathrm{ad}\left(P\right)\to TP/G\right)$ in $\mathrm{ad}\left(P\right)\backslash \mathrm{Vect}\left(X\right)$. 
\end{example}



\section{$\mathrm{Vect}_{TG}^{\mathrm{aff}}\left(X\right)$ and $\mathrm{Vect}_G\left(X\right)$}
\label{sec: functors}

In this section we relate affine actions to equivariant vector bundles and in section \ref{subsec: thm B} prove our second main result Theorem \ref{thm: B}. We mostly describe the statements in terms of the category $\mathfrak g_X\backslash \mathrm{Vect}_G\left(X\right)$, but will indicate the corresponding results for $\mathrm{Vect}_{TG}^{\mathrm{aff}}\left(X\right)$ - see Remarks \ref{rem: UforTG} and \ref{rem: analogue thmB}.

\subsection{The functors $\mathrm U,\mathrm F$ and $\sigma$}
\label{subsec: forget}

We will define several functors between the categories $\mathfrak g_X \backslash \mathrm{Vect}_G\left(X\right)$ and $\mathrm{Vect}_G\left(X\right)$. In Remark \ref{rem: UforTG} below we explain what are the corresponding functors between $\mathrm{Vect}_{TG}^{\mathrm{aff}}\left(X\right)$ and $\mathrm{Vect}_G\left(X\right)$.

\begin{definition}
We denote by $\mathrm U$ the forgetful functor
\[
\mathrm U: \mathfrak g_X \backslash \mathrm{Vect}_G\left(X\right) \to \mathrm{Vect}_G\left(X\right)
\]
which maps a pair $\left(E,\phi\right)$ to the $G$-vector bundle $E$ and a morphism $\psi : \left(E,\phi\right) \to \left(E',\phi'\right)$ to the morphism $\psi:E\to E'$.
\end{definition}

\begin{definition}
\label{def: sigma}
We denote by $\sigma$ the functor
\[
\sigma: \mathrm{Vect}_G\left(X\right) \to 
\mathfrak g_X \backslash \mathrm{Vect}_G\left(X\right)
\]
defined on objects by $\sigma\left(E\right) = \left(E,0\right)$
and on morphisms by $\sigma \left( \psi : E \to E' \right) = \left(\psi: \left(E,0\right) \to \left(E',0\right)\right)$.
\end{definition}

\begin{definition}
\label{def: left adjoint}
We denote by $\mathrm F$ the functor
\[
\mathrm F : \mathrm{Vect}_G\left(X\right) \to \mathfrak g_X \backslash \mathrm{Vect}_G\left(X\right)
\]
which maps a $G$-vector bundle $E$ to the object $\left(\mathfrak g_X\oplus E,i_{\mathfrak g_X}\right)$, where $i_{\mathfrak g_X}:\mathfrak g_X \to \mathfrak g_X \oplus E$ is the natural inclusion $\left(x,\alpha\right)\mapsto \left(\left(x,\alpha\right),0_x\right)$, and maps a morphism $\psi:E\to E'$ to the morphism $\mathrm{id}_\mathfrak g \oplus \psi:\left(\mathfrak g_X\oplus E,i_{\mathfrak g_X}\right) \to \left(\mathfrak g_X\oplus E',i_{\mathfrak g_X}\right)$.
\end{definition}

\begin{remark}
Note that $\mathrm F$ can be written in terms of coproducts in $\mathrm{Vect}_G\left(X\right)$ or products in $\mathfrak g_X \backslash \mathrm{Vect}_G\left(X\right)$ (see  Example \ref{ex: products} above and Proposition \ref{prop: products} below):
\[
\mathrm F = \left(\mathfrak g_X \oplus -,i_{\mathfrak g_X}\right)
= \left(\left(\mathfrak g_X,\mathrm{id}\right) \times -\right) \circ \sigma
\]
\end{remark}

\begin{remark}
\label{rem: UforTG}
Via the isomorphism of Theorem \ref{thm: A}, the functors $\mathrm U,\sigma$ and $\mathrm F$ correspond to functors between $\mathrm{Vect}_{TG}^{\mathrm{aff}}\left(X\right)$ and $\mathrm{Vect}_G\left(X\right)$, which we denote by the same symbols. The functor
\[
\mathrm U: \mathrm{Vect}_{TG}^{\mathrm{aff}}\left(X\right) \to \mathrm{Vect}_G\left(X\right)
\] 
is given by the pullback of actions along the Lie group morphism $0_{TG}:G \to TG$. It maps an affine action $\mu:TG\times E \to E$ to the $G$-vector bundle $E$ with action $g \cdot e_x = 0_g \cdot e_x$. The functor
\[
\sigma: \mathrm{Vect}_G\left(X\right) \to \mathrm{Vect}_{TG}^{\mathrm{aff}}\left(X\right)
\] 
is given by the pullback of actions along $\pi_{TG}:TG\to G$. It maps a $G$-vector bundle $E$ to the affine action defined by $v_g \cdot e_x = g \cdot e_x$. See also Example \ref{example: equivariant bundles} above. The functor
\[
\mathrm F:\mathrm{Vect}_G\left(X\right) \to \mathrm{Vect}_{TG}^{\mathrm{aff}}\left(X\right)
\] 
maps a $G$-equivariant vector bundle $E$ to the affine action $TG\times \left(\mathfrak g_X \oplus E\right) \to \mathfrak g_X \oplus E$ given by 
\[
v_g \cdot \left(\left(x,\alpha\right),e_x\right)  = \left(\left(g\cdot x,\alpha+\left(R_{g^{-1}}\right)_\ast v_g\right),g\cdot e_x\right).
\]
\end{remark}

\subsection{Lemma on slice categories}

This subsection contains the proof of the following category-theoretic result. Lemma \ref{lemma: D} will only be used to prove Theorem \ref{thm: C} below and will not be referred to elsewhere, so that the reader uninterested in category-theoretic abstractions may wish to skip to section \ref{subsec: thm B} below. 

Recall that if $\mathcal U : \mathscr D \rightleftarrows \mathscr C : \mathcal F$ are a pair of adjoint functors, with $\mathcal F$ left adjoint to $\mathcal U$, then there is an associated \emph{monad} acting on the category $\mathscr C$. This monad consists of the endofunctor $\mathbb T = \mathcal U\mathcal F : \mathscr C \to \mathscr C$, together with a certain pair $\chi,\eta$ of natural transformations which are constructed from the unit and counit of the adjunction. There is a canonical \emph{comparison functor} $\mathcal K:  \mathscr D \to \mathscr C^{\left<\mathbb T,\chi,\eta\right>}$, from $\mathscr D$ to the category of $\left<\mathbb T,\chi,\eta\right>$\emph{-algebras}, and the adjunction is called \emph{monadic} if $\mathcal K$ is an isomorphism. One can therefore understand the statement that an adjunction is monadic as saying that the category $\mathscr D$ can be reconstructed from $\mathscr C$ and the monad $\left<\mathbb T,\chi,\eta\right>$ in a canonical way, which shows the importance of the notion. See \cite{maclane} for further details. 

\begingroup
\setcounter{mainlemma}{3} 
\renewcommand\themainlemma{\Alph{mainlemma}}
\begin{mainlemma}
\label{lemma: D}
Let $\mathscr C$ be a category, $m$ an object in $\mathscr C$, and $\mathcal U: m\backslash \mathscr C \to \mathscr C$ the standard forgetful functor. If the coproduct $m \amalg a$ exists in $\mathscr C$ for all objects $a$ in $\mathscr C$ then the functor $\mathcal F: a \mapsto \left(m\amalg a, i_m\right)$ is left adjoint to $\mathcal U$, and this adjunction is monadic.
\end{mainlemma}
\endgroup

\begin{proof}
Let $\left(b,\phi\right)$ be an object in $m\backslash \mathscr C$. Via the universal property of the coproduct $m\amalg a$ the natural bijection
\begin{align*}
\mathrm{Hom}_\mathscr C\left(m,a'\right) \times \mathrm{Hom}_\mathscr C\left(a,a'\right) & \mathrel{\mathop{\to}^{\cong}_{\Sigma}}
\mathrm{Hom}_\mathscr C\left(m\amalg a,a'\right) 
\end{align*}
restricts to a natural bijection
\begin{align}
\label{eqn: hom-set adj1}
\mathrm{Hom}_\mathscr C\left(a,a'\right)
& 
\xrightarrow{\cong}
\mathrm{Hom}_{m\backslash \mathscr C}
\left( \left(m\amalg a, i_m\right) , \left(a',\phi'\right) \right)
\\
\nonumber
f & \mapsto \Sigma\left(\left(\phi',f\right)\right)
\end{align}
The bijection \eqref{eqn: hom-set adj1} can be rewritten as
\begin{align}
\label{eqn: homset adj}
\mathrm{Hom}_\mathscr C\left(a,\mathcal U\left(\left(a',\phi\right)\right)\right)
& \xrightarrow{\cong}
\mathrm{Hom}_{m\backslash \mathscr C}
\left( \mathcal F\left(a\right) , \left(a',\phi'\right) \right)
\end{align}
This shows that $\mathcal F$ is left adjoint to $\mathcal U$. 

It remains to show that the adjunction $\mathcal F \dashv \mathcal U$ defined by \eqref{eqn: homset adj} is monadic. We must show that the canonical comparison functor $\mathcal K: m\backslash \mathscr C \to \mathscr C^{\left< \mathbb T,\chi,\eta\right>}$ from $m\backslash \mathscr C$ to the category of $\left< \mathbb T,\chi,\eta\right>$-algebras is an isomorphism, where $\left< \mathbb T,\chi,\eta\right>$ is the monad associated to the adjunction, and $\mathbb T,\chi,\eta$ and $\mathcal K$ are defined below (see \cite{maclane} for the general case). 

Straightforward calculations show that the unit $\eta:1_{\mathscr C}\Rightarrow \mathcal U \mathcal F$ and counit $\varepsilon:\mathcal F \mathcal U \Rightarrow 1_{m\backslash \mathscr C}$ of the adjunction have components 
\begin{align*}
\eta_a = i_a : a & \to m \amalg a \\
\varepsilon_{\left(a',\phi\right)} = \left(\phi,1_{a'}\right) : \left(m \amalg a',i_m\right) & \to \left(a',\phi\right) 
\end{align*}
We denote the associated monad by $\left< \mathbb T,\chi,\eta\right>$, where $\mathbb T = \mathcal U \mathcal F : \mathscr C \to \mathscr C$, $\chi=\mathcal U \varepsilon \mathcal F : \mathbb T^2 \Rightarrow \mathbb T$, and $\eta:1_\mathscr C \Rightarrow \mathbb T$. 
The functor $\mathbb T$ is the functor $a \mapsto m\amalg a$. A further calculation shows that $\chi$ has components
\begin{align*}
\chi_{a} = \left(i_m, 1_{m\amalg a}\right) : m \amalg \left(m \amalg a\right) & \to m \amalg a
\end{align*}

We can now describe the category $\mathscr C^{\left<\mathbb T,\chi,\eta\right>}$ of $\left<\mathbb T,\chi,\eta\right>$-algebras, or just $\mathbb T$-algebras for short. Let $\left<a,h\right>$ be a $\mathbb T$-algebra. Then $h$ is a morphism $h: \mathbb Ta = m\amalg a \to a$ in $\mathscr C$ such that $h \circ \eta_a = 1_a$ as morphisms $a \to a$, and $h \circ \mathbb Th = h \circ \chi_a$ as morphisms $\mathbb T^2 a \to a$. Under the natural bijection
\begin{align*}
\mathrm{Hom}_\mathscr C\left(m\amalg a,a\right) 
& \xrightarrow{\cong} 
\mathrm{Hom}_\mathscr C\left(m,a\right) \times \mathrm{Hom}_\mathscr C\left(a,a\right) 
\end{align*}
the first condition on $h$ is equivalent to the condition that $h$ is of the form $\left(\bar h,1_a\right)$. A calculation then shows that the second condition on $h$ holds automatically. It follows that $\mathbb T$-algebras can be identified with objects of $m \backslash \mathscr C$. 

Let $f:\left<a,h\right> \to \left<a',h'\right>$ be a morphism of $\mathbb T$-algebras. Then $f:a \to a'$ is a morphism in $\mathscr C$ such that $h' \circ \mathbb Tf = f \circ h$ as morphisms $\mathbb Ta \to a'$. Identifying $h=\left(\bar h,1_a\right)$ and $h'=\left(\bar h',1_{a'}\right)$ as above, the condition on $f$ is
\[
\left(\bar h',1_a\right) \circ \left(1_m \amalg f\right)
=
f \circ \left(\bar h,1_a\right)
\]
which is equivalent to
\[
\left(\bar h',f\right) = \left(f\bar h,f\right)
\]
and therefore to $\bar h' = f\bar h$. It follows that morphisms between $\mathbb T$-algebras can be identified with morphisms in $m\backslash \mathscr C$. The discussion above shows that there is an isomorphism of categories $m\backslash \mathscr C \xrightarrow{\cong}  \mathscr C^{\left<\mathbb T,\chi,\eta\right>} $ defined on objects by
\begin{align*}
\left(a,\bar h\right) & \mapsto  \left<a,\left(\bar h,1_a\right)\right>  \\
& = \left<\mathcal U\left(a,\bar h\right), \mathcal U\left(\varepsilon_{\left(a,\bar h\right)}\right)\right> 
\end{align*}
and on morphisms by
\begin{align*}
\left(f: 
\left(a,\bar h\right)
\to 
\left(a',\bar h'\right)
\right)
& \mapsto
\left(f: 
\left<a,\left(\bar h,1_{a}\right)\right>
\to 
\left<a',\left(\bar h',1_{a'}\right)\right>
\right)  \\
& = \bigg(\mathcal Uf : 
\left<\mathcal U\left(a,\bar h\right), \mathcal U\left(\varepsilon_{\left(a,\bar h\right)}\right)\right>
\\
& \;\;\;\;\; \to
\left<\mathcal U\left(a',\bar h'\right), \mathcal U\left(\varepsilon_{\left(a',\bar h'\right)}\right)\right>\bigg)
\end{align*}
This functor is exactly the comparison functor $\mathcal K: m\backslash \mathscr C \to \mathscr C^{\left<\mathbb T,\chi,\eta\right>}$ (see \cite{maclane}), which implies that the adjunction $\mathcal F \dashv \mathcal U$ is monadic.
\end{proof}

\subsection{Theorem B}
\label{subsec: thm B}

\begingroup
\setcounter{mainthm}{1} 
\renewcommand\themainthm{\Alph{mainthm}}
\begin{mainthm}
\label{thm: B}
The following statements hold:
\begin{enumerate}
\item $\mathrm F$ is left adjoint to $\mathrm U$.
\item The adjunction $\mathrm F \dashv \mathrm U$  is monadic.
\item $\sigma$ is the unique section of $\mathrm U$.
\end{enumerate}
\end{mainthm}
\endgroup

\begin{proof}
The category $\mathrm{Vect}_G\left(X\right)$ is additive and so, in particular, has finite coproducts. If $E$ is an object in $\mathrm{Vect}_G\left(X\right)$ then the coproduct $\mathfrak g_X \amalg E$ is given by the $G$-vector bundle $\mathfrak g_X \oplus E$ together with the natural inclusions $i_{\mathfrak g_X}:\mathfrak g_X \to \mathfrak g_X\oplus E$ and $i_E:E \to \mathfrak g_X\oplus E$. It follows that $\mathrm F$ corresponds to the functor $\mathcal F: a \mapsto \left(c\amalg a, i_c\right)$ in the statement of Lemma \ref{lemma: D}, and $\mathrm U$ to the functor $\mathcal U$. The first two statements in the Theorem therefore follow from Lemma \ref{lemma: D}. 

We will prove the third statement. If $E$ is an object in $\mathrm{Vect}_G\left(X\right)$ then $\mathrm U\left(\sigma \left(E\right)\right) = \mathrm U \left(E,0\right) = E$, and if $\psi:E\to E'$ is a morphism then $\mathrm U\left( \sigma\left(\psi\right)\right) = \mathrm U \left( \psi:\left(E,0\right) \to \left(E',0\right) \right) = \psi:E\to E'$, so that  $\sigma$ is a section of $\mathrm U$. It remains to show that it is unique. Let $\sigma'$ be a section of $\mathrm U$. If $E$ is an object in $\mathrm{Vect}_G\left(X\right)$ then $\sigma'\left(E\right) = \left(E,\phi\right)$ for some morphism $\phi:\mathfrak g_X \to E$. Applying $\sigma$ to the zero-morphism $0:E \to E$ produces a morphism $0: \left(E,\phi\right) \to \left(E,\phi\right)$ in $\mathfrak g_X \backslash \mathrm{Vect}_G\left(X\right)$ which implies that $\phi = 0 \circ \phi = 0$. Therefore $\sigma'\left(E\right)=\left(E,0\right)=\sigma\left(E\right)$. If $\psi:E\to E'$ is a morphism in $\mathrm{Vect}_G\left(X\right)$ then the fact that $\sigma'$ is a section of $\mathrm U$ implies that $\sigma'\left(\psi\right) = \psi :\left(E,0\right) \to \left(E',0\right)$. Therefore $\sigma'=\sigma$.
\end{proof}

\begin{remark}
\label{remark: homset adj}
It follows from the details of the proof of Lemma \ref{lemma: D}, and in particular \eqref{eqn: homset adj}, that the adjunction $\mathrm F\dashv \mathrm U$ of Theorem \ref{thm: B} is given in terms of hom-sets by:
\begin{align*}
\mathrm{Hom}_{\mathrm{Vect}_G\left(X\right)}\left(E,\mathrm{U}\left(\left(E',\phi'\right)\right)\right)
& \xrightarrow{\cong}
\mathrm{Hom}_{\mathfrak g_X \backslash \mathrm{Vect}_G\left(X\right)}
\left(\mathrm{F}\left(E\right) , \left(E',\phi'\right) \right)
\end{align*}
\[
\xymatrix{
E \ar[d]^f  &   \ar@{|->}[r]  & & \mathfrak g_X \ar[dr]_{\phi'} \ar[r]^-{i_{\mathfrak g_X}}& \mathfrak g_X \oplus E \ar[d]^{\left(\phi',f\right)} \\
E' &&&& E'
}
\]
\end{remark}

\begin{remark}
\label{rem: analogue thmB}
It follows from the isomorphism $\mathrm{Vect}_{TG}^{\mathrm{aff}}\left(X\right) \cong \mathfrak g_X \backslash \mathrm{Vect}_G\left(X\right)$ of Theorem \ref{thm: A}, that the analogue of Theorem \ref{thm: B} involving the functors $\mathrm U: \mathrm{Vect}_{TG}^{\mathrm{aff}}\left(X\right) \to \mathfrak g_X \backslash \mathrm{Vect}_G\left(X\right)$ and $\mathrm F,\sigma : \mathrm{Vect}_G\left(X\right) \to \mathrm{Vect}_{TG}^{\mathrm{aff}}\left(X\right)$ defined in Remark \ref{rem: UforTG} also holds.
\end{remark}


\section{Pullbacks, products and K-theory}
\label{sec: K}

In this section we first show that a $G$-map $f:X\to Y$ determines a pullback functor $\mathrm{Vect}_{TG}^{\mathrm{aff}}\left(Y\right) \to \mathrm{Vect}_{TG}^{\mathrm{aff}}\left(X\right)$. Unlike $\mathrm{Vect}_G\left(X\right)$, $\mathrm{Vect}_{TG}^{\mathrm{aff}}\left(X\right)$ does not carry any obvious additive structure. Nonetheless, $\mathrm{Vect}_{TG}^{\mathrm{aff}}\left(X\right)$ does have initial and terminal objects, and finite products. This will allow us to define the Grothendieck group $K_{TG}^{\mathrm{aff}}\left(X\right)$, and to show that that the above mentioned pullback functors determine group homomorphisms between these groups. Our third main result, Theorem \ref{thm: C}, shows that the functors $\mathrm U$ and $\sigma$ defined in section \ref{sec: functors} determine a natural isomorphism between $K_{TG}^{\mathrm{aff}}\left(X\right)$ and the equivariant K-theory $KO_G\left(X\right)$.

\subsection{Pullback functors}
\label{subsec: pullback functors}

If $f_1:X_1 \to X_2$ is a $G$-equivariant smooth map then the pullback of $G$-vector bundles determines a functor $f_1^\ast : \mathrm{Vect}_G\left(X_2\right) \to \mathrm{Vect}_G\left(X_1\right)$. If $f_2:X_2 \to X_3$ is a composable $G$-equivariant smooth map then there is a natural isomorphism of functors $\left(f_2f_1\right)^\ast \cong f_1^\ast f_2^\ast$. Note that there is a canonical isomorphism of $G$-vector bundles $f_1^\ast \mathfrak g_{X_2} \cong \mathfrak g_{X_1}$ given by $\left(x,\left(f_1(x),\xi\right)\right) \mapsto \left(x,\xi\right)$. 

\begin{definition}
\label{def: pullback}
If $f:X \to Y$ is a $G$-equivariant smooth map then we define the functor
\[
 \tilde f^\ast : \mathfrak g_Y \backslash \mathrm{Vect}_G\left(Y\right)
\to
 \mathfrak g_X \backslash \mathrm{Vect}_G\left(X\right)
\]
to be the composition
\begin{equation}
\label{eqn: pullback defn}
\mathfrak g_X \backslash \mathrm{Vect}_G\left(Y\right) \to
\left(f^\ast \mathfrak g_Y\right) \backslash \mathrm{Vect}_G\left(X\right)
\xrightarrow{\cong}
\mathfrak g_X \backslash \mathrm{Vect}_G\left(X\right)
\end{equation}
where the first functor in \eqref{eqn: pullback defn} is the functor between slice categories determined by the pullback of $G$-vector bundles, and the second is determined by the canonical isomorphism $f^\ast \mathfrak g_Y \cong \mathfrak g_X$.
\end{definition}

\begin{remark}
\label{rem: TGpullback}
In terms of affine actions the action of $TG$ on $f^\ast E$ is given by $v_g \cdot \left(x,e\right) = \left(g\cdot x,v_g \cdot e\right)$.
\end{remark}

The following Proposition follows immediately from the corresponding result for $G$-vector bundles:

\begin{proposition}
\label{prop: natural isom of pullbacks}
If $f_1,f_2$ are composable $G$-equivariant maps and $\tilde f_1^\ast, \tilde f_2^\ast$ are the pullback functors defined in Definition \ref{def: pullback}, then there is a natural isomorphism $\widetilde{\left(f_2 f_1\right)}^\ast \cong \tilde f_1^\ast \tilde f_2^\ast$.
\end{proposition}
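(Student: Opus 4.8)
\textbf{Proof proposal for Proposition~\ref{prop: natural isom of pullbacks}.}

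The plan is to reduce everything to the corresponding statement for $G$-vector bundles, namely the natural isomorphism $(f_2 f_1)^\ast \cong f_1^\ast f_2^\ast$ of functors $\mathrm{Vect}_G(X_3)\to\mathrm{Vect}_G(X_1)$, together with the compatibility of this isomorphism with the canonical trivializations $f^\ast\mathfrak g_Y\cong\mathfrak g_X$. Write $f_1:X_1\to X_2$ and $f_2:X_2\to X_3$. By Definition~\ref{def: pullback}, each $\tilde f_i^\ast$ is the composite of the slice-category functor induced by the ordinary pullback $f_i^\ast$ with the slice-category isomorphism induced by the canonical iso $f_i^\ast\mathfrak g\cong\mathfrak g$. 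So $\tilde f_1^\ast\tilde f_2^\ast$ and $\widetilde{(f_2 f_1)}^\ast$ are both built from (i) pullback functors on $\mathrm{Vect}_G(-)$ and (ii) canonical $\mathfrak g$-trivializations, and the claim is that the two resulting functors on slice categories are naturally isomorphic.

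First I would record the general principle: if $\mathscr C\xrightarrow{P}\mathscr D$ is a functor, $m$ an object of $\mathscr C$, and we are given an object $n$ of $\mathscr D$ together with an isomorphism $\theta:P(m)\xrightarrow{\cong}n$, then $P$ together with $\theta$ induces a functor $m\backslash\mathscr C\to n\backslash\mathscr D$, $(a,\phi)\mapsto(P(a),P(\phi)\circ\theta^{-1})$; moreover a natural isomorphism $\alpha:P\Rightarrow P'$ of functors $\mathscr C\to\mathscr D$ compatible with the chosen trivializations (meaning $\theta'=\alpha_m\circ\theta$, or the appropriate triangle commutes) induces a natural isomorphism between the two induced slice functors, with components $\alpha_a$. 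Applying this with $P=(f_2 f_1)^\ast$, $P'=f_1^\ast f_2^\ast$, $\alpha$ the natural isomorphism $(f_2 f_1)^\ast\cong f_1^\ast f_2^\ast$, $m=\mathfrak g_{X_3}$, and the canonical trivializations on both sides, yields the desired natural isomorphism $\widetilde{(f_2 f_1)}^\ast\cong\tilde f_1^\ast\tilde f_2^\ast$ — \emph{provided} the compatibility hypothesis holds.

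So the one genuine point to check — and the step I expect to be the main obstacle, though it is still routine — is that the natural isomorphism $(f_2 f_1)^\ast\mathfrak g_{X_3}\cong f_1^\ast f_2^\ast\mathfrak g_{X_3}$ obtained from the pullback compatibility, when composed with the canonical trivializations $f_1^\ast f_2^\ast\mathfrak g_{X_3}\cong f_1^\ast\mathfrak g_{X_2}\cong\mathfrak g_{X_1}$, agrees with the canonical trivialization $(f_2 f_1)^\ast\mathfrak g_{X_3}\cong\mathfrak g_{X_1}$. This is verified by unwinding the explicit formula for the trivialization recorded before Definition~\ref{def: pullback}: on fibres over $x\in X_1$ all three maps are the identity on the copy of $\mathfrak g$ sitting inside $X_i\times\mathfrak g$ at the appropriate point, so the triangle of isomorphisms commutes on the nose. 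Once this compatibility is in hand, the conclusion follows formally from the general principle above, and naturality of the resulting isomorphism is inherited from naturality of $(f_2 f_1)^\ast\cong f_1^\ast f_2^\ast$.
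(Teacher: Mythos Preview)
Your proposal is correct and follows the same approach as the paper, which simply asserts that the proposition ``follows immediately from the corresponding result for $G$-vector bundles'' without further detail. You have written out what that reduction actually entails, including the compatibility check for the canonical trivializations $f^\ast\mathfrak g_Y\cong\mathfrak g_X$, which the paper leaves implicit.
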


\begin{remark}
The categories $\mathrm{Vect}_{TG}^{\mathrm{aff}}\left(X\right)$ for varying $X$, functors of Definition \ref{def: pullback}, and natural isomorphisms of Proposition \ref{prop: natural isom of pullbacks} in fact constitute a \emph{pseudo-functor} $G\text{-Man} \to \mathrm{Cat}$ from the category of $G$-manifolds to the 2-category  of (essentially small) categories, but we shall not make use of this fact.
\end{remark}

The pullback functors defined above are compatible with the functors $\mathrm U,\sigma$ and $\mathrm F$ defined in section \ref{subsec: forget} in the following sense. Let us temporarilly record the dependence on $X$ by $\mathrm U_X,\sigma_X$ and $\mathrm F_X$. The following Proposition then follows from the definitions of these functors:
\begin{proposition}
\label{prop: naturality of functors}
Let $f:X \to Y$ be a $G$-map. Then there are natural isomorphisms 
\[
\mathrm U_X \tilde f^\ast \cong f^\ast \mathrm U_Y, \; \;
\sigma_X f^\ast \cong \tilde f^\ast \sigma_Y, \; \;
\mathrm F_X f^\ast \cong \tilde f^\ast \mathrm F_Y.
\]
\end{proposition}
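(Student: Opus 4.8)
\textbf{Proof proposal for Proposition \ref{prop: naturality of functors}.}

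The plan is to unwind the definitions of $\mathrm U,\sigma,\mathrm F$ (Definitions in section \ref{subsec: forget}) and the pullback functor $\tilde f^\ast$ (Definition \ref{def: pullback}) and check that each of the three claimed isomorphisms reduces to a statement already known for $G$-vector bundles, namely the natural isomorphism $f^\ast\mathfrak g_Y \cong \mathfrak g_X$ and the fact that pullback of $G$-vector bundles commutes with finite direct sums. First I would fix a $G$-map $f:X \to Y$ and recall that $\tilde f^\ast$ is the composite of the slice-category functor induced by the pullback $f^\ast:\mathrm{Vect}_G\left(Y\right)\to\mathrm{Vect}_G\left(X\right)$ with the functor induced by the canonical isomorphism $c_f : f^\ast\mathfrak g_Y \xrightarrow{\cong}\mathfrak g_X$. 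Concretely $\tilde f^\ast$ sends an object $\left(E,\phi:\mathfrak g_Y\to E\right)$ to $\left(f^\ast E,\; f^\ast\phi\circ c_f^{-1}:\mathfrak g_X\to f^\ast E\right)$.

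For the first isomorphism, $\mathrm U_X$ just forgets the structure map, so $\mathrm U_X\tilde f^\ast\left(E,\phi\right) = f^\ast E = f^\ast\mathrm U_Y\left(E,\phi\right)$ on the nose, and the same holds on morphisms; hence $\mathrm U_X\tilde f^\ast = f^\ast\mathrm U_Y$ as functors, a fortiori naturally isomorphic. For the second, $\sigma_Y E = \left(E,0\right)$ and $\tilde f^\ast\sigma_Y E = \left(f^\ast E,\; f^\ast 0\circ c_f^{-1}\right) = \left(f^\ast E,0\right) = \sigma_X f^\ast E$, again strictly, since the zero morphism pulls back to the zero morphism; naturality in $E$ is immediate. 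For the third, $\mathrm F_Y E = \left(\mathfrak g_Y\oplus E,\; i_{\mathfrak g_Y}\right)$, so $\tilde f^\ast\mathrm F_Y E = \left(f^\ast\left(\mathfrak g_Y\oplus E\right),\; f^\ast i_{\mathfrak g_Y}\circ c_f^{-1}\right)$, whereas $\mathrm F_X f^\ast E = \left(\mathfrak g_X\oplus f^\ast E,\; i_{\mathfrak g_X}\right)$. Here the comparison is no longer an equality: I would use the canonical isomorphism $f^\ast\left(\mathfrak g_Y\oplus E\right)\cong f^\ast\mathfrak g_Y\oplus f^\ast E \xrightarrow{c_f\oplus\mathrm{id}}\mathfrak g_X\oplus f^\ast E$ in $\mathrm{Vect}_G\left(X\right)$, and the only thing to verify is that this isomorphism intertwines the two structure maps, i.e. that $\left(c_f\oplus\mathrm{id}\right)\circ f^\ast i_{\mathfrak g_Y}\circ c_f^{-1} = i_{\mathfrak g_X}$; this is a direct diagram chase using compatibility of $c_f$ with the coproduct inclusions. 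Naturality in $E$ then follows because the direct-sum decomposition of a pullback and the isomorphism $c_f$ are themselves natural.

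The only genuine point requiring care — and the one I would call the main obstacle, though it is a mild one — is the third isomorphism, where one must produce an actual nontrivial natural isomorphism of functors rather than an identity, and must check it respects the slice-category structure maps; the first two are literally equalities of functors. I would therefore present the argument by first isolating the two input facts (naturality of $c_f$ and of the direct-sum decomposition of pullbacks, both standard for $G$-vector bundles), then treating $\mathrm U$ and $\sigma$ in a single sentence each, and finally doing the short diagram chase for $\mathrm F$. Since the proposition is explicitly flagged in the text as following immediately from the corresponding result for $G$-vector bundles, I would keep the write-up brief, referring to Definition \ref{def: pullback} and the remarks preceding it for the pullback conventions.
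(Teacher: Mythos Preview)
Your proposal is correct and takes essentially the same approach as the paper, which simply asserts that the proposition ``follows from the definitions of these functors'' without giving any further argument. Your write-up is more detailed than the paper's (in particular your treatment of the $\mathrm F$ case, where a genuine isomorphism rather than an equality is needed), but the underlying idea is identical.
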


\subsection{Products}
\label{subsec: products}

\begin{proposition}
\label{prop: terminal}
$\mathfrak g_X \backslash\mathrm{Vect}_G\left(X\right)$ has a terminal object, which is given by $\left(X \times \left\{0\right\},0\right)$, and an initial object given by $\left(\mathfrak g_X,\mathrm{id}\right)$. 
\end{proposition}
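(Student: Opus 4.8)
The plan is to verify the two universal properties directly, exploiting the fact, recorded in the preliminaries, that $\mathrm{Vect}_G\left(X\right)$ is an additive category whose zero object is the trivial bundle $X\times\left\{0\right\}$; being a zero object, $X\times\left\{0\right\}$ is simultaneously initial and terminal in $\mathrm{Vect}_G\left(X\right)$. Everything then reduces to the general behaviour of slice categories.

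First I would treat the terminal object. Let $\left(E,\phi\right)$ be an arbitrary object of $\mathfrak g_X\backslash\mathrm{Vect}_G\left(X\right)$. By definition a morphism $\left(E,\phi\right)\to\left(X\times\left\{0\right\},0\right)$ is a morphism $\chi:E\to X\times\left\{0\right\}$ of $G$-vector bundles over $X$ with $\chi\circ\phi=0$. Since $X\times\left\{0\right\}$ is terminal in $\mathrm{Vect}_G\left(X\right)$ there is exactly one such $\chi$, namely the zero morphism, and the slice condition $\chi\circ\phi=0$ then holds automatically. Hence every object admits a unique morphism to $\left(X\times\left\{0\right\},0\right)$, so it is terminal.

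Next I would treat the initial object, which is the standard observation that $\left(m,\mathrm{id}_m\right)$ is initial in any slice category $m\backslash\mathscr C$. Concretely, a morphism $\left(\mathfrak g_X,\mathrm{id}\right)\to\left(E,\phi\right)$ is a morphism $\chi:\mathfrak g_X\to E$ in $\mathrm{Vect}_G\left(X\right)$ satisfying $\chi\circ\mathrm{id}_{\mathfrak g_X}=\phi$, i.e. $\chi=\phi$; so $\phi$ is the unique such morphism and $\left(\mathfrak g_X,\mathrm{id}\right)$ is initial.

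I do not expect any genuine obstacle here: the only points requiring care are to use the paper's convention for the over-slice category $m\backslash\mathscr C$ consistently (so that objects are morphisms \emph{out of} $\mathfrak g_X$), and to invoke the additivity of $\mathrm{Vect}_G\left(X\right)$ to identify $X\times\left\{0\right\}$ as a terminal — not merely initial — object, both of which were set up earlier in the excerpt.
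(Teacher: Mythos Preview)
Your proof is correct and follows essentially the same approach as the paper, which simply states that the result follows from the definition of the slice category and the fact that $X\times\{0\}$ is terminal in $\mathrm{Vect}_G(X)$. You have merely unpacked the details that the paper leaves implicit, including the standard observation that $(m,\mathrm{id}_m)$ is initial in any over-slice category $m\backslash\mathscr C$.
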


\begin{proof}
This follows from the definition of the slice category $\mathfrak g_X \backslash \mathrm{Vect}_G\left(X\right)$ and the fact that $X\times \left\{0\right\}$ is a terminal object in $\mathrm{Vect}_G\left(X\right)$. 
\end{proof}

\begin{corollary}
\label{cor: TGterminal}
The category  $\mathrm{Vect}_{TG}^{\mathrm{aff}}\left(X\right)$ has a terminal object given by the zero-vector bundle $X\times \left\{0\right\}$ with action $v_g \cdot \left(x,0\right) = \left(g \cdot x, 0\right)$, and an initial object given by the vector bundle $\mathfrak g_X$ with action $v_g \cdot \left(x,\xi\right) = \left(g \cdot x, \xi + \left(R_{g^{-1}}\right)_\ast v_g\right)$.
\end{corollary}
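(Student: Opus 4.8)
The plan is to deduce the Corollary directly from Theorem \ref{thm: A} together with Proposition \ref{prop: terminal}. Theorem \ref{thm: A} provides an \emph{isomorphism} of categories $\mathrm{Vect}_{TG}^{\mathrm{aff}}\left(X\right) \cong \mathfrak g_X \backslash \mathrm{Vect}_G\left(X\right)$, and any isomorphism (indeed any equivalence) of categories carries terminal objects to terminal objects and initial objects to initial objects. By Proposition \ref{prop: terminal} the terminal object of $\mathfrak g_X \backslash \mathrm{Vect}_G\left(X\right)$ is the pair $\left(X\times\left\{0\right\},0\right)$ and the initial object is the pair $\left(\mathfrak g_X,\mathrm{id}\right)$. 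It therefore suffices to transport these two pairs across the isomorphism of Theorem \ref{thm: A}, i.e. to describe the affine actions to which they correspond under the bijection of Proposition \ref{prop: decomp}.

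First I would treat the terminal object. The pair $\left(X\times\left\{0\right\},0\right)$ consists of the zero bundle $E = X\times\left\{0\right\}$, equipped with its unique $G$-equivariant fibrewise-linear action covering $t$, together with $\rho_\mathfrak g = 0$. Substituting into the reconstruction formula \eqref{eqn: reconstruct} yields $\mu\left(v_g,\left(x,0\right)\right) = \left(g\cdot x,0\right)$, which is the asserted action on the zero bundle. For the initial object, the pair $\left(\mathfrak g_X,\mathrm{id}\right)$ is exactly the object discussed in Example \ref{example: action algebroid}: there $\mu_G$ is the adjoint $G$-action on $\mathfrak g_X$ and $\rho_\mathfrak g = \mathrm{id}_\mathfrak g$, regarded via Proposition \ref{prop: decomp2} as the linear map $\mathfrak g \to \Gamma\left(\mathfrak g_X\right)$ sending $\alpha$ to the constant section $x\mapsto\left(x,\alpha\right)$. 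Feeding this pair into \eqref{eqn: reconstruct} reproduces the affine action on $\mathfrak g_X$ recorded in Example \ref{example: action algebroid}, which is the asserted one.

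There is no serious obstacle here: the substance is already contained in Theorem \ref{thm: A} and Proposition \ref{prop: terminal}, and what remains is bookkeeping — feeding the correct pair $\left(\mu_G,\rho_\mathfrak g\right)$ into \eqref{eqn: reconstruct} in each of the two cases, and invoking the elementary fact that equivalences of categories preserve terminal and initial objects. A reader who preferred to avoid that general fact could instead argue directly from Definition \ref{def: morphisms of affine actions}: for any affine action $\mu:TG\times E \to E$ the zero morphism $E\to X\times\left\{0\right\}$ is the unique $TG$-equivariant morphism to the zero bundle, and, by Proposition \ref{prop: decomp homs}, the bundle morphism $\mathfrak g_X\to E$ corresponding to $\rho_\mathfrak g:\mathfrak g\to\Gamma\left(E\right)$ is the unique $TG$-equivariant morphism out of $\mathfrak g_X$; but routing the argument through Theorem \ref{thm: A} is cleaner and shorter.
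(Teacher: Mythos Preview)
Your approach is exactly the paper's: the published proof is the single sentence ``This follows from the isomorphism of Theorem \ref{thm: A},'' and you have simply spelled out the transport of the two objects from Proposition \ref{prop: terminal} through that isomorphism via \eqref{eqn: reconstruct}. One small caveat: the formula you obtain from Example \ref{example: action algebroid} is $v_g\cdot(x,\beta)=(g\cdot x,\mathrm{Ad}_g\beta+(R_{g^{-1}})_\ast v_g)$, which differs from the formula printed in the Corollary by an $\mathrm{Ad}_g$ --- this appears to be a typo in the statement rather than a flaw in your argument.
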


\begin{proof}
This follows from the isomorphism of Theorem \ref{thm: A}.
\end{proof}

\begin{proposition}
\label{prop: products}
The category $\mathfrak g_X\backslash \mathrm{Vect}_G\left(X\right)$ has finite products. The product of $\left(E,\phi\right)$ and $\left(E',\phi'\right)$ is given by the object $\left(E\oplus E',\left(\phi,\phi'\right)\right)$ and the canonical projections to $\left(E,\phi\right)$ and $\left(E',\phi'\right)$.
\end{proposition}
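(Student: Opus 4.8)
The plan is to verify directly that $\left(E\oplus E',\left(\phi,\phi'\right)\right)$, equipped with the evident projection morphisms, satisfies the universal property of a product in the slice category $\mathfrak g_X\backslash\mathrm{Vect}_G\left(X\right)$. First I would check that the proposed projections are morphisms in the slice category at all: the canonical projection $\mathrm{pr}_E : E\oplus E' \to E$ in $\mathrm{Vect}_G\left(X\right)$ satisfies $\mathrm{pr}_E\circ\left(\phi,\phi'\right) = \phi$ by the defining property of the morphism $\left(\phi,\phi'\right):\mathfrak g_X \to E\oplus E'$ into the product $E\oplus E'$ in the additive category $\mathrm{Vect}_G\left(X\right)$, so $\mathrm{pr}_E$ defines a morphism $\left(E\oplus E',\left(\phi,\phi'\right)\right)\to\left(E,\phi\right)$; similarly for $\mathrm{pr}_{E'}$.

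Next, given any object $\left(F,\psi\right)$ together with morphisms $f:\left(F,\psi\right)\to\left(E,\phi\right)$ and $f':\left(F,\psi\right)\to\left(E',\phi'\right)$ in the slice category, I would form the morphism $\left(f,f'\right):F\to E\oplus E'$ provided by the product structure in $\mathrm{Vect}_G\left(X\right)$. The point to verify is that this is a morphism in $\mathfrak g_X\backslash\mathrm{Vect}_G\left(X\right)$, i.e. that $\left(f,f'\right)\circ\psi = \left(\phi,\phi'\right)$. Composing with each projection, $\mathrm{pr}_E\circ\left(f,f'\right)\circ\psi = f\circ\psi = \phi$ (since $f$ is a slice morphism) and $\mathrm{pr}_E\circ\left(\phi,\phi'\right)=\phi$, and likewise for $E'$; since a morphism into the product $E\oplus E'$ is determined by its two components, the two morphisms $\mathfrak g_X \to E\oplus E'$ agree. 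Uniqueness of $\left(f,f'\right)$ as a slice morphism making the two triangles commute follows from its uniqueness as a morphism in $\mathrm{Vect}_G\left(X\right)$ with the prescribed components, because the forgetful functor $\mathrm U$ is faithful. The empty product, i.e. the terminal object, is handled by Proposition \ref{prop: terminal}, so finite products exist.

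This argument is essentially a diagram chase and there is no real obstacle; the only thing to be careful about is the bookkeeping distinguishing the two meanings of the symbol $\left(-,-\right)$ — once as the morphism $\mathfrak g_X\to E\oplus E'$ built from $\phi,\phi'$, once as the induced morphism $F\to E\oplus E'$ built from $f,f'$ — and the fact that in the additive category $\mathrm{Vect}_G\left(X\right)$ the biproduct $E\oplus E'$ simultaneously serves as the categorical product, so that the projections and the pairing maps behave as expected. (Compare Example \ref{ex: products}, where the product object is described; the content here is just the verification of the universal property.)
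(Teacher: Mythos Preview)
Your proposal is correct and follows essentially the same approach as the paper: a direct verification of the universal property of the product in the slice category, using that $E\oplus E'$ is the product in $\mathrm{Vect}_G\left(X\right)$ and checking that the induced morphism $\left(f,f'\right)$ respects the structure maps from $\mathfrak g_X$. If anything you are slightly more careful, explicitly checking that the projections are slice morphisms and invoking faithfulness of $\mathrm U$ for uniqueness, whereas the paper leaves these implicit.
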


 \begin{proof}
The result follows from general facts about limits in slice categories. Explicitly, let $\left(F,\psi\right)$ be an object in $\mathfrak g_X \backslash \mathrm{Vect}_G\left(X\right)$ and $f:\left(F,\psi\right) \to \left(E,\phi\right)$ and $f':\left(F,\psi\right) \to \left(E',\phi'\right)$ be morphisms. In particular, $f\psi =\phi$ and $f'\psi=\phi'$. As $E\oplus E'$ is a product in $\mathrm{Vect}_G\left(X\right)$ there is a unique morphism $\left(f,f'\right):F\to E\oplus E'$ satisfying $\mathrm{pr}_E \circ \left(f,f'\right) = f$ and $\mathrm{pr}_{E'}\circ \left(f,f'\right) = f'$. We have $\left(f,f'\right)\circ \psi = \left(f\psi,f'\psi\right) = \left(\phi,\phi'\right)$ and therefore $\left(f,f'\right)$ defines a morphism $\left(f,f'\right):\left(F,\psi\right) \to \left(E\oplus E',\left(f,f'\right)\right)$ in $\mathfrak g_X\backslash \mathrm{Vect}_G\left(X\right)$. Finally, the identities above involving $\mathrm{pr}_E$ and $\mathrm{pr}_{E'}$ imply that the same identities hold when these morphisms are considered as morphisms in $\mathrm{Vect}_{TG}^{\mathrm{aff}}\left(X\right)$. This shows that $\left(E\oplus E',\left(\phi,\phi'\right)\right)$ and the canonical projections to $\left(E,\phi\right)$ and $\left(E',\phi'\right)$ satisfy the required universal property.
\end{proof}

\begin{corollary}
\label{cor: TGproduct}
The category $\mathrm{Vect}_{TG}^{\mathrm{aff}}\left(X\right)$ has finite products. If $E$ and $E'$ are vector bundles equipped with affine actions then their product is the vector bundle $E\oplus E'$ with affine action $v_g \cdot \left(e,e'\right) = \left(v_g \cdot e,v_g \cdot e'\right)$.
\end{corollary}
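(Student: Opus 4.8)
The plan is to transport the result of Proposition \ref{prop: products} across the isomorphism of categories $\mathrm{Vect}_{TG}^{\mathrm{aff}}\left(X\right) \cong \mathfrak g_X \backslash \mathrm{Vect}_G\left(X\right)$ established in Theorem \ref{thm: A}. Since an isomorphism of categories preserves all limits, and in particular finite products, Proposition \ref{prop: products} immediately gives that $\mathrm{Vect}_{TG}^{\mathrm{aff}}\left(X\right)$ has finite products, and that the product of two objects is computed by applying the inverse of the isomorphism of Theorem \ref{thm: A} to the product computed in $\mathfrak g_X \backslash \mathrm{Vect}_G\left(X\right)$. So the only real content is to identify, concretely, what affine action the object $\left(E\oplus E',\left(\phi,\phi'\right)\right)$ corresponds to.

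First I would recall that if $\mu$ and $\mu'$ are the affine actions on $E$ and $E'$, then under the bijections of Propositions \ref{prop: decomp} and \ref{prop: decomp2} they correspond to pairs $\left(E,\phi\right)$ and $\left(E',\phi'\right)$, where $\phi\left(x,\alpha\right) = \rho_\mathfrak g\left(\alpha\right)_x$ and similarly for $\phi'$. Next I would apply Proposition \ref{prop: products} to conclude that the product in $\mathfrak g_X \backslash \mathrm{Vect}_G\left(X\right)$ is $\left(E\oplus E',\left(\phi,\phi'\right)\right)$. Then I would run the reconstruction formula \eqref{eqn: reconstruct} in reverse: the underlying $G$-action on $E\oplus E'$ is the diagonal one, $g\cdot\left(e_x,e'_x\right) = \left(g\cdot e_x, g\cdot e'_x\right)$, and the map $\mathfrak g \to \Gamma\left(E\oplus E'\right)$ is $\alpha \mapsto \left(\rho_\mathfrak g\left(\alpha\right),\rho'_\mathfrak g\left(\alpha\right)\right)$. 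Plugging these into \eqref{eqn: reconstruct} yields
\[
v_g \cdot \left(e_x,e'_x\right) = \left(g\cdot e_x + \rho_\mathfrak g\left(\left(R_{g^{-1}}\right)_\ast v_g\right)_{g\cdot x},\; g\cdot e'_x + \rho'_\mathfrak g\left(\left(R_{g^{-1}}\right)_\ast v_g\right)_{g\cdot x}\right) = \left(v_g\cdot e_x,\; v_g\cdot e'_x\right),
\]
where the last equality is again \eqref{eqn: reconstruct} applied to each factor separately. This is exactly the diagonal action claimed, and it is worth noting that this computation was already recorded in Example \ref{ex: products}, so one may simply cite it.

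I do not expect any genuine obstacle here: the statement is a formal consequence of Theorem \ref{thm: A}, Proposition \ref{prop: products}, and Example \ref{ex: products}. The only point requiring a sentence of care is the observation that an isomorphism (as opposed to a mere equivalence) of categories transports the universal property of the product strictly, so that the projections on the $\mathrm{Vect}_{TG}^{\mathrm{aff}}\left(X\right)$ side are literally the canonical projections $E\oplus E' \to E$ and $E\oplus E' \to E'$ viewed as morphisms of affine actions — which is immediate since, by Proposition \ref{prop: decomp homs}, a morphism of vector bundles over $X$ is $TG$-equivariant precisely when it is $G$-equivariant and compatible with the $\phi$'s, and the canonical projections manifestly satisfy both conditions.
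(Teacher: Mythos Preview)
Your proposal is correct and follows exactly the paper's approach: the paper's proof is the single sentence ``This follows from the isomorphism of Theorem \ref{thm: A},'' and your argument simply unpacks this, transporting Proposition \ref{prop: products} across that isomorphism and using the reconstruction formula \eqref{eqn: reconstruct} (equivalently, citing Example \ref{ex: products}) to identify the resulting affine action as the diagonal one.
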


\begin{proof}
This follows from the isomorphism of Theorem \ref{thm: A}.
\end{proof}

\begin{proposition}
The functors $\mathrm U$ and $\sigma$ preserve products.
\end{proposition}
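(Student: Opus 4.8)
The plan is to use the explicit descriptions of products from Proposition \ref{prop: terminal} and Proposition \ref{prop: products} and check directly that $\mathrm U$ and $\sigma$ take them to the corresponding products and terminal objects in $\mathrm{Vect}_G\left(X\right)$. Since both categories have all finite products, it suffices to check that each functor preserves the terminal object (the empty product) and binary products; the general case follows by induction. There is nothing deep here — the proof is essentially an unwinding of definitions — so the main "obstacle" is simply making sure the universal-property bookkeeping is stated cleanly rather than any genuine difficulty.

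For $\mathrm U : \mathfrak g_X \backslash \mathrm{Vect}_G\left(X\right) \to \mathrm{Vect}_G\left(X\right)$: by Proposition \ref{prop: terminal} the terminal object of the source is $\left(X\times\left\{0\right\},0\right)$, and $\mathrm U$ sends it to the zero bundle $X\times\left\{0\right\}$, which is terminal in $\mathrm{Vect}_G\left(X\right)$ (it is the zero object of the additive category). For binary products, Proposition \ref{prop: products} gives that the product of $\left(E,\phi\right)$ and $\left(E',\phi'\right)$ is $\left(E\oplus E',\left(\phi,\phi'\right)\right)$ with the canonical projections, and $\mathrm U$ sends this object to $E\oplus E'$ and the two projection morphisms to the canonical projections $\mathrm{pr}_E, \mathrm{pr}_{E'}$ of $\mathrm{Vect}_G\left(X\right)$; since $E\oplus E'$ with these projections is precisely the product in $\mathrm{Vect}_G\left(X\right)$, $\mathrm U$ preserves the product. (Equivalently, one can invoke the general fact that a right adjoint preserves limits: by Theorem \ref{thm: B}, $\mathrm U$ is a right adjoint, hence preserves all limits that exist, in particular finite products. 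I would mention this as a one-line alternative.)

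For $\sigma : \mathrm{Vect}_G\left(X\right) \to \mathfrak g_X \backslash \mathrm{Vect}_G\left(X\right)$: the terminal object of $\mathrm{Vect}_G\left(X\right)$ is the zero bundle $X\times\left\{0\right\}$, and $\sigma$ sends it to $\left(X\times\left\{0\right\},0\right)$ since the only morphism $\mathfrak g_X \to X\times\left\{0\right\}$ is the zero morphism; by Proposition \ref{prop: terminal} this is the terminal object of $\mathfrak g_X \backslash \mathrm{Vect}_G\left(X\right)$. For binary products, $\sigma$ sends $E\oplus E'$ to $\left(E\oplus E',0\right)$ and the canonical projections $\mathrm{pr}_E,\mathrm{pr}_{E'}$ to the corresponding morphisms in the slice category; one checks that $\left(E\oplus E',0\right)$ equals $\left(E\oplus E',\left(0,0\right)\right)$, which by Proposition \ref{prop: products} is exactly the product of $\sigma\left(E\right)=\left(E,0\right)$ and $\sigma\left(E'\right)=\left(E',0\right)$, and that the projection morphisms match. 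Hence $\sigma$ preserves products. This completes the proof; the whole argument is a routine verification against Propositions \ref{prop: terminal} and \ref{prop: products}, so I would keep it brief.
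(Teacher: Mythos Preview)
Your proof is correct and follows the same approach as the paper, which simply says the result follows immediately from the definitions of $\mathrm U$ and $\sigma$ together with the description of products in Proposition \ref{prop: products}. You have merely made the verification explicit (and additionally checked the terminal object and noted the right-adjoint alternative for $\mathrm U$), but the underlying argument is identical.
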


\begin{proof}
This follows immediately from the definitions of the functors $\mathrm U$ and $\sigma$ and the description of products in $\mathfrak g_X \backslash \mathrm{Vect}_G\left(X\right)$ given in Proposition \ref{prop: products}.
\end{proof}

\begin{remark}
Note that if $G$ is not discrete then the left adjoint functor $\mathrm F$ of Definition \ref{def: left adjoint} does not preserve products: 
\begin{align*}
\mathrm F \left(E\right) \times \mathrm F\left(E'\right) 
& =
\left(\left(\mathfrak g_X\oplus E\right)\oplus\left(\mathfrak g_X\oplus E'\right),\left(i_{\mathfrak g_X},i_{\mathfrak g_X}\right)\right) \\
& \ncong
\left(\mathfrak g_X\oplus \left(E\oplus E'\right),i_{\mathfrak g_X}\right) \\
& = \mathrm F\left(E\times E'\right)
\end{align*}
\end{remark}

\begin{proposition}
\label{prop: tilde f products}
If $f:X\to Y$ is a $G$-map then the functor $\tilde f^\ast : \mathfrak g_Y \backslash \mathrm{Vect}_G\left(Y\right)
\to
 \mathfrak g_X \backslash \mathrm{Vect}_G\left(X\right)$ of Definition \ref{def: pullback} preserves products.
\end{proposition}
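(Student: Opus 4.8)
The plan is to decompose $\tilde f^\ast$ as in \eqref{eqn: pullback defn} and to check that each of the two functors in that composition preserves finite products. The second functor, $\left(f^\ast\mathfrak g_Y\right)\backslash\mathrm{Vect}_G\left(X\right)\xrightarrow{\cong}\mathfrak g_X\backslash\mathrm{Vect}_G\left(X\right)$, is an isomorphism of categories, so it preserves all limits and in particular finite products. Hence it suffices to treat the first functor in \eqref{eqn: pullback defn}, namely the functor $\mathfrak g_Y\backslash\mathrm{Vect}_G\left(Y\right)\to\left(f^\ast\mathfrak g_Y\right)\backslash\mathrm{Vect}_G\left(X\right)$ induced by the pullback $f^\ast:\mathrm{Vect}_G\left(Y\right)\to\mathrm{Vect}_G\left(X\right)$ of $G$-vector bundles.

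The key input is the elementary categorical fact that if $P:\mathscr C\to\mathscr D$ is a finite-product-preserving functor and $m$ an object of $\mathscr C$, then the induced functor $m\backslash\mathscr C\to P\left(m\right)\backslash\mathscr D$, $\left(a,\phi\right)\mapsto\left(P\left(a\right),P\left(\phi\right)\right)$, also preserves finite products. This follows from the explicit descriptions of products and terminal objects in slice categories (Propositions \ref{prop: terminal} and \ref{prop: products}): the product of $\left(a_1,\phi_1\right)$ and $\left(a_2,\phi_2\right)$ is $\left(a_1\times a_2,\left(\phi_1,\phi_2\right)\right)$ with the canonical projections, which $P$ sends to $\left(P\left(a_1\times a_2\right),P\left(\phi_1,\phi_2\right)\right)$ together with the images of the projections; since $P$ preserves products there is a canonical isomorphism $P\left(a_1\times a_2\right)\cong P\left(a_1\right)\times P\left(a_2\right)$ compatible with the projections, and under this isomorphism $P\left(\phi_1,\phi_2\right)$ corresponds to $\left(P\phi_1,P\phi_2\right)$, because both have the same composites with the two projections. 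I would then apply this fact with $P=f^\ast$, which is an additive functor between additive categories and therefore preserves finite products; concretely, there is a canonical isomorphism $f^\ast\left(E\oplus E'\right)\cong f^\ast E\oplus f^\ast E'$ compatible with the canonical projections, and $f^\ast$ carries the zero bundle over $Y$ to the zero bundle over $X$, so $f^\ast$ preserves both binary products and the terminal object.

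There is essentially no genuine obstacle here; like Proposition \ref{prop: natural isom of pullbacks}, the statement is a formal consequence of the corresponding fact for $G$-vector bundles together with the description of limits in slice categories. The only point requiring a moment's care is the bookkeeping with the two canonical isomorphisms $f^\ast\left(E\oplus E'\right)\cong f^\ast E\oplus f^\ast E'$ and $f^\ast\mathfrak g_Y\cong\mathfrak g_X$ and their compatibility with the structure maps $\phi$ of objects of the slice category, but this is routine.
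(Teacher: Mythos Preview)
Your argument is correct and is essentially the same approach as the paper's own proof, which simply notes that the result follows from the definition of $\tilde f^\ast$ together with the fact that $f^\ast:\mathrm{Vect}_G\left(Y\right)\to\mathrm{Vect}_G\left(X\right)$ preserves products. You have just unpacked this one-line justification by spelling out the general slice-category lemma and the role of the isomorphism $f^\ast\mathfrak g_Y\cong\mathfrak g_X$.
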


\begin{proof}
This follows from the definition of the functor $\tilde f^\ast$ and the fact that the functor $f^\ast : \mathrm{Vect}_G\left(Y\right) \to \mathrm{Vect}_G\left(X\right)$ preserves products.
\end{proof}

\subsection{K-theory}

\begin{definition}
\label{def: KTG}
Let $K_{TG}^{\mathrm{aff}}\left(X\right)$ be the Grothendieck group of the category $\mathrm{Vect}_{TG}^{\mathrm{aff}}\left(X\right)$.
\end{definition}

It follows from Proposition \ref{prop: tilde f products} and the functoriality of the construction of the Grothendieck group (see section \ref{subsec: Grothendieck}) that the functor $\tilde f^\ast : \mathfrak g_Y \backslash \mathrm{Vect}_G\left(Y\right)
\to \mathfrak g_X \backslash \mathrm{Vect}_G\left(X\right)$ associated to a $G$-map $f:X\to Y$ induces a group homomorphism $K\left(\tilde f^\ast\right):K_{TG}^{\mathrm{aff}}\left(Y\right) \to K_{TG}^{\mathrm{aff}}\left(X\right)$. 

\begin{definition}
We denote by
\[
K_{TG}^{\mathrm{aff}}\left(-\right) : G\text{-Man} \to \mathrm{Ab}
\]
the functor which maps a $G$-manifold $X$ to the abelian group $K_{TG}^{\mathrm{aff}}\left(X\right)$ and maps a $G$-map $f:X\to Y$ to the group homomorphism $K\left(\tilde f^\ast\right)$.
\end{definition}

Our last main result is that the functor $K_{TG}^{\mathrm{aff}}\left(-\right)$ is isomorphic to $KO_G\left(-\right)$ (recall from section \ref{subsec: Grothendieck} that $KO_G\left(X\right)$ denotes the Grothendieck group of $\mathrm{Vect}_G\left(X\right)$). The proof is based on the following Lemma, which states the category-theoretic reasons for this result.

\begingroup
\setcounter{mainlemma}{4} 
\renewcommand\themainlemma{\Alph{mainlemma}}
\begin{mainlemma}
\label{lemma: E}
Let $\mathscr C$ be an additive category and $m$ an object in $\mathscr C$. Let $\mathcal U: m\backslash \mathscr C \to \mathscr C$ be the standard forgetful functor and $\mathcal S: \mathscr C \to m \backslash \mathscr C$ the section $a \mapsto \left(a,0\right)$. Then the group homomorphism
\[
K\left(\mathcal U \right): K\left(m\backslash \mathscr C\right) \to K\left(\mathscr C\right) 
\]
is an isomorphism. Its inverse is
\[
K\left(\mathcal S\right) : 
K\left(\mathscr C\right) \to
 K\left(m\backslash \mathscr C\right) 
\]
\end{mainlemma}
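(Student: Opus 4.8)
The plan is to show that $\mathcal{S}$ and $\mathcal{U}$ induce mutually inverse maps on Grothendieck groups. First I note that both functors preserve finite products: $\mathcal{U}$ does because the product in $m\backslash\mathscr{C}$ is computed on underlying objects (Proposition \ref{prop: products} in the additive setting), and $\mathcal{S}$ does because $(a,0)\times(a',0)=(a\oplus a',0)$. Hence $K(\mathcal{U})$ and $K(\mathcal{S})$ are well-defined group homomorphisms, and $K(\mathcal{U})\circ K(\mathcal{S})=K(\mathcal{U}\circ\mathcal{S})=K(1_{\mathscr{C}})=\mathrm{id}$ since $\mathcal{U}\mathcal{S}=1_{\mathscr{C}}$. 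So $K(\mathcal{S})$ is a split injection with left inverse $K(\mathcal{U})$, and it remains only to prove that $K(\mathcal{S})$ is surjective, equivalently that $K(\mathcal{U})$ is injective.

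The key step is therefore: for every object $(a,\phi)$ in $m\backslash\mathscr{C}$, the class $[(a,\phi)]$ in $K(m\backslash\mathscr{C})$ equals $[(a,0)]=K(\mathcal{S})([a])$, so that $K(\mathcal{S})$ hits every generator. The idea is that $(a,\phi)$ and $(a,0)$ become isomorphic after adding a common summand, using the shear automorphism trick: the map $\mathrm{id}_a\oplus\mathrm{id}_{a'}$ is not what I want, rather I want to exhibit an isomorphism in $m\backslash\mathscr{C}$ between $(a,\phi)\times(\text{something})$ and $(a,0)\times(\text{same something})$. Concretely, consider the object $(m, \mathrm{id}_m)$, which is the initial object of $m\backslash\mathscr{C}$ (cf. Proposition \ref{prop: terminal}, here in the additive case $(m,\mathrm{id}_m)$ is initial). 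I claim $(a,\phi)\times(m,\mathrm{id}_m)\cong(a,0)\times(m,\mathrm{id}_m)$ in $m\backslash\mathscr{C}$: the product on the left is $(a\oplus m,(\phi,\mathrm{id}_m))$ and on the right is $(a\oplus m,(0,\mathrm{id}_m))$, and the automorphism of $a\oplus m$ given in block form by $\begin{pmatrix}\mathrm{id}_a & \phi\\ 0 & \mathrm{id}_m\end{pmatrix}$ (using additivity of $\mathscr{C}$ to form this matrix) intertwines the two structure maps, since $\begin{pmatrix}\mathrm{id}_a & \phi\\ 0 & \mathrm{id}_m\end{pmatrix}\circ\begin{pmatrix}0\\ \mathrm{id}_m\end{pmatrix}=\begin{pmatrix}\phi\\ \mathrm{id}_m\end{pmatrix}$. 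This shear is invertible with inverse obtained by negating $\phi$.

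Given that isomorphism, in $K(m\backslash\mathscr{C})$ we get $[(a,\phi)]+[(m,\mathrm{id}_m)]=[(a,0)]+[(m,\mathrm{id}_m)]$, hence $[(a,\phi)]=[(a,0)]$ after cancelling in the group. This shows $K(\mathcal{S})$ is surjective onto all generators $[(a,\phi)]$, and since these generate $K(m\backslash\mathscr{C})$ and differences thereof, $K(\mathcal{S})$ is surjective; combined with the earlier split injectivity, $K(\mathcal{S})$ is an isomorphism with inverse $K(\mathcal{U})$. The main obstacle I anticipate is purely bookkeeping: verifying that the shear map is a morphism in $m\backslash\mathscr{C}$, that it is an isomorphism, and that the product objects in $m\backslash\mathscr{C}$ are as claimed — all of which follow from Proposition \ref{prop: products} and the additive structure of $\mathscr{C}$, so there is no real conceptual difficulty, only the need to phrase the matrix manipulations cleanly in an arbitrary additive category (where one should really argue via the universal properties of biproducts rather than literal matrices, or invoke that any additive category embeds suitably).
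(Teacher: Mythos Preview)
Your proof is correct and follows the same overall strategy as the paper's: reduce to showing $K(\mathcal S)$ is surjective, then establish $[(a,\phi)]=[(a,0)]$ in $K(m\backslash\mathscr C)$ by exhibiting an isomorphism after stabilizing by a common summand, using a shear automorphism. The only difference is the choice of stabilizer: you add the initial object $(m,\mathrm{id}_m)$ and shear $a\oplus m$ by the unipotent $\left(\begin{smallmatrix}\mathrm{id}_a & \phi\\ 0 & \mathrm{id}_m\end{smallmatrix}\right)$ to get $(a,0)\times(m,\mathrm{id}_m)\cong(a,\phi)\times(m,\mathrm{id}_m)$, whereas the paper adds a second copy of $(a,\phi)$ itself and shears $a\oplus a$ by $(e_1,e_2)\mapsto(e_1,e_2-e_1)$ to obtain $(a,\phi)\times(a,\phi)\cong(a,\phi)\times(a,0)$. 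Both are the same elementary-matrix trick; your version makes the role of the initial object visible, while the paper's keeps the argument entirely inside copies of $a$ and so avoids mentioning $m$ in the stabilizing term.
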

\endgroup

\begin{proof}
As $\mathcal U \circ \mathcal S = 1_\mathscr C$, the functoriality of the Grothendieck group implies that the homomorphism $K\left(\mathcal U\right)$ is surjective, $K\left(\mathcal S\right)$ is injective, and $K\left(\mathcal U\right) \circ K\left(\mathcal S\right) = 1_{K\left(\mathscr C\right)}$. The Lemma will therefore follow from the fact that $K\left(\mathcal S\right)$ is surjective, which we will prove. 

Let $\left(a,f\right)$ be an object in $m\backslash \mathscr C$. Consider the following diagram:
\begin{equation}
\label{diag: prodtriangle}
\xymatrix{
m \ar[r]^-{\left(f,f\right)} \ar[dr]_{\left(f,0\right)} &
a \oplus a \ar[d]^{h} \\
& a \oplus a
}
\end{equation}
where $h = \left(\mathrm{pr}_1,\mathrm{pr}_2 - \mathrm{pr}_1\right)$, or in terms of elements $h\left(e_1,e_2\right) = \left(e_1, e_2 - e_1\right)$. It is clear that \eqref{diag: prodtriangle} commutes, and $h$ is an isomorphism with inverse $\left(\mathrm{pr}_1,\mathrm{pr_2} + \mathrm{pr}_1\right)$. Therefore, $h$ determines an isomorphism
\[
\left(a,f\right) \times \left(a,f\right) \xrightarrow{\cong}
\left(a,f\right) \times \left(a,0\right)
\]
in $m\backslash \mathscr C$. It follows that in the Grothendieck group $K\left(m\backslash \mathscr C\right)$ we have
\[
\left[ \left(a,f\right) \right]  = \left[ \left(a,0\right) \right] = K\left(\mathcal S\right) \left(\left[a\right] \right)
\]
Therefore, if $\left(a_1,f_1\right)$ and $\left(a_2,f_2\right)$ are objects in $m\backslash \mathscr C$, then
\[
\left[ \left(a_1,f_1\right) \right] 
-
\left[ \left(a_2,f_2\right) \right]
=
K\left(\mathcal S\right)
\left(
\left[a_1\right] - \left[a_2\right] 
\right)
\]
which shows that $K\left(\mathcal S\right)$ is surjective.
\end{proof}

\begin{remark}
There exist similar results describing the Grothendieck groups of various categories associated to an additive category $\mathscr C$. For example, the Main Theorem in \cite{almkvist} describes the group $K\left(\mathrm{end} \; P\left(A\right)\right)$, where $\mathrm{end} \; P\left(A\right)$ is the additive category of endomorphisms of finitely generated projected modules over a commutative ring $A$.
\end{remark}

\begingroup
\setcounter{mainthm}{2} 
\renewcommand\themainthm{\Alph{mainthm}}
\begin{mainthm}
\label{thm: C}
If $X$ is a $G$-manifold then the functor $\mathrm U$ induces a group isomorphism
\[
K\left(\mathrm U\right):K_{TG}^{\mathrm{aff}}\left(X\right) \to KO_G\left(X\right)
\]
Its inverse is 
\[
K\left(\sigma\right) : KO_G\left(X\right) \to K_{TG}^{\mathrm{aff}}\left(X\right)
\]
These isomorphisms are natural in $X$, and thus define an isomorphism of functors
\[
K_{TG}^{\mathrm{aff}}\left(-\right) \xrightarrow{\cong} KO_G\left(-\right)
\]
\end{mainthm}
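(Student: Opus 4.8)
The plan is to reduce everything to Lemma \ref{lemma: E}, applied to the additive category $\mathscr C = \mathrm{Vect}_G\left(X\right)$ with distinguished object $m = \mathfrak g_X$, and then to read off naturality from the compatibility statements of Section \ref{subsec: pullback functors}.

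First I would note that the isomorphism of categories $\mathrm{Vect}_{TG}^{\mathrm{aff}}\left(X\right) \cong \mathfrak g_X \backslash \mathrm{Vect}_G\left(X\right)$ of Theorem \ref{thm: A} carries the product of $\mathrm{Vect}_{TG}^{\mathrm{aff}}\left(X\right)$ described in Corollary \ref{cor: TGproduct} to the product of $\mathfrak g_X \backslash \mathrm{Vect}_G\left(X\right)$ described in Proposition \ref{prop: products}, hence induces a canonical isomorphism $K_{TG}^{\mathrm{aff}}\left(X\right) \cong K\left(\mathfrak g_X \backslash \mathrm{Vect}_G\left(X\right)\right)$. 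Under this identification the functor $\mathrm U$ is precisely the standard forgetful functor $\mathcal U : \mathfrak g_X \backslash \mathrm{Vect}_G\left(X\right) \to \mathrm{Vect}_G\left(X\right)$ and $\sigma$ is precisely the section $\mathcal S : E \mapsto \left(E,0\right)$. Since $\mathrm{Vect}_G\left(X\right)$ is additive (Section \ref{sec: prelim}), Lemma \ref{lemma: E} applies verbatim and yields that $K\left(\mathrm U\right)$ is an isomorphism with inverse $K\left(\sigma\right)$, establishing the first two assertions.

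For naturality, fix a $G$-map $f : X \to Y$. The functors $\mathrm U_X,\mathrm U_Y,f^\ast$ and $\tilde f^\ast$ all preserve finite products (by the Propositions of Sections \ref{subsec: pullback functors} and \ref{subsec: products}), so each induces a homomorphism of Grothendieck groups. Proposition \ref{prop: naturality of functors} provides a natural isomorphism $\mathrm U_X \tilde f^\ast \cong f^\ast \mathrm U_Y$; since $K\left(-\right)$ is functorial on product-preserving functors and assigns equal homomorphisms to naturally isomorphic functors (Section \ref{subsec: Grothendieck}), we obtain
\[
K\left(\mathrm U_X\right) \circ K\left(\tilde f^\ast\right) = K\left(\mathrm U_X \tilde f^\ast\right) = K\left(f^\ast \mathrm U_Y\right) = K\left(f^\ast\right) \circ K\left(\mathrm U_Y\right),
\]
which is exactly the assertion that the maps $K\left(\mathrm U_X\right)$ assemble into a natural transformation $K_{TG}^{\mathrm{aff}}\left(-\right) \Rightarrow KO_G\left(-\right)$. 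Being an isomorphism at each $X$, it is an isomorphism of functors; its inverse, assembled from the $K\left(\sigma_X\right)$, is natural either because the inverse of a natural isomorphism is natural, or directly from the natural isomorphism $\sigma_X f^\ast \cong \tilde f^\ast \sigma_Y$ of Proposition \ref{prop: naturality of functors}.

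The only genuine mathematical content sits in Lemma \ref{lemma: E} (which rests on the elementary shear automorphism $\left(a,f\right)\times\left(a,f\right)\cong\left(a,f\right)\times\left(a,0\right)$ of the slice category); the remaining work is bookkeeping. The one point that requires care, and which I would check explicitly, is that the equivalence of Theorem \ref{thm: A} together with the forgetful and section functors are compatible with the formation of finite products, so that the passage to Grothendieck groups is legitimate and the abstract $\mathcal U,\mathcal S$ of Lemma \ref{lemma: E} really do correspond to $\mathrm U,\sigma$.
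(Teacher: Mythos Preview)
Your proposal is correct and follows essentially the same route as the paper: apply Lemma \ref{lemma: E} with $\mathscr C = \mathrm{Vect}_G\left(X\right)$ and $m = \mathfrak g_X$ to obtain the mutually inverse isomorphisms $K\left(\mathrm U\right)$ and $K\left(\sigma\right)$, and then deduce naturality from Proposition \ref{prop: naturality of functors} together with the functoriality of the Grothendieck construction. The paper's own proof is a two-sentence version of exactly this argument; your additional remarks about product-compatibility of the equivalence of Theorem \ref{thm: A} and the explicit chain $K\left(\mathrm U_X\right)\circ K\left(\tilde f^\ast\right) = K\left(f^\ast\right)\circ K\left(\mathrm U_Y\right)$ merely spell out what the paper leaves implicit.
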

\endgroup
\begin{proof}
For fixed $X$, the fact that $K\left(\mathrm U\right)$ and $K\left(\sigma\right)$ are mutually inverse isomorphisms follows from Lemma \ref{lemma: E}. The fact that these isomorphisms are natural in $X$ follows from the first two natural isomorphisms in Proposition \ref{prop: naturality of functors} and the functoriality of the Grothendieck group.
\end{proof}

\begin{remark}
\label{rem: monoids vs groups}
Note that Theorem \ref{thm: C} does not hold at the level of monoids. The functor $\mathrm U$ induces a morphism of commutative monoids
\[
\left(\mathfrak g_X \backslash \mathrm{Vect}_G\left(X\right)\right) / \cong \; \;  
\to 
\left(\mathrm{Vect}_G\left(X\right)\right) / \cong
\] 
which is an isomorphism if and only if $G$ is discrete. In particular, $\left[\left(\mathfrak g_X,0\right)\right]$ and $\left[\left(\mathfrak g_X,\mathrm{id}\right)\right]$ are both mapped to $\left[\mathfrak g_X\right]$. This can be seen most explicitly in the situation of Proposition \ref{prop: K(point)} (see also Remark \ref{rem: K(point)}). In this case the equality $\left[\left(\mathfrak g,\mathrm{id}\right)\right] = \left[\left(\mathfrak g,0\right)\right]$ in $K_{TG}^{\mathrm{aff}}\left(\ast\right)$ implies the equality
\[
\left[\left(\mathfrak g,\mathrm{id}\right)^n \times \left(W,0\right)\right] = \left[\left(\mathfrak g,0\right)^n \times \left(W,0\right)\right]
\]
in $K_{TG}^{\mathrm{aff}}\left(\ast\right)$ between the two classes of elements appearing in the classification of objects in $\mathfrak g \backslash \mathrm{Rep}\left(G\right)$ given in Proposition \ref{prop: K(point)}.
\end{remark}

\section{The complex case}
\label{sec: complex}

It is possible reformulate the notion of affine actions in the complex setting. The complexified tangent bundle $T_\mathbb C G$ is both a complex vector bundle and a \emph{real} Lie group. We define an affine action of $T_\mathbb CG$ on a complex vector bundle $E$ to be an action for which the action map $T_\mathbb CG \times E \to E$ is a morphism of \emph{complex} vector bundles. For example, if $X$ is a $G$-manifold with action $G\times X \to X$ then the derivative defines a complex affine action $T_\mathbb C G\times T_\mathbb C X \to T_\mathbb CX$ of $T_\mathbb CG$ on the complexified tangent bundle $T_\mathbb CX$ of $X$.

We use $\mathrm{Vect}_{T_\mathbb C G}^{\mathrm{aff},\mathbb C}\left(X\right)$ (respectively $\mathrm{Vect}_G^{\mathbb C}\left(X\right)$) to denote the category of complex affine actions (respectively $G$-equivariant complex vector bundles) over $X$, and $K_{T_\mathbb C G}^{\mathrm{aff},\mathbb C}\left(X\right)$ (respectively $K_G\left(X\right)$) to denote the \\ Grothendieck group of $\mathrm{Vect}_{T_\mathbb C G}^{\mathrm{aff},\mathbb C}\left(X\right)$ (respectively $\mathrm{Vect}_G^{\mathbb C}\left(X\right)$). As standard, $\mathfrak g_\mathbb C$ denotes the complexified Lie algebra of $G$ equipped with the complexified adjoint representation. 

The functors $\mathrm U,\sigma$ and $\mathrm F$ defined in section \ref{subsec: forget} have analogues in the complex case, which we denote by 
\[
\mathrm U^\mathbb C : \mathrm{Vect}_{T_\mathbb CG}^{\mathrm{aff},\mathbb C}\left(X\right) \to \mathrm{Vect}_G^{\mathbb C}\left(X\right)
\] 
and 
\[
\mathrm F^\mathbb C,\sigma^\mathbb C: \mathrm{Vect}_G^{\mathbb C}\left(X\right) \to \mathrm{Vect}_{T_\mathbb CG}^{\mathrm{aff},\mathbb C}\left(X\right)
\]

\subsection{Complex theorems}

The following analogues of Theorems \ref{thm: A},\ref{thm: B} and \ref{thm: C} hold with essentially the same proofs.

\begingroup
\setcounter{mainthm}{0} 
\renewcommand\themainthm{\Alph{mainthm}$'$}
\begin{mainthm}
\label{thm: A'}
The following three categories are isomorphic:
\begin{enumerate}
\item The category $\mathrm{Vect}_{T_\mathbb C G}^{\mathrm{aff},\mathbb C}\left(X\right)$ of complex affine actions of $T_\mathbb CG$ over $X$.
\item The category of pairs $\left(E,\rho_\mathfrak g\right)$, defined as follows: 
\begin{itemize}
\item The objects are pairs $\left(E,\rho_\mathfrak g\right)$, where $E$ is a $G$-equivariant complex vector bundle over $X$ and $\rho_{\mathfrak g_\mathbb C} : \mathfrak g_\mathbb C \to \Gamma\left(E\right)$ is a $G$-equivariant complex linear map. 
\item The morphisms $\left(E,\rho_\mathfrak g\right) \to \left(E',\rho_\mathfrak g\right)$ are morphisms $\psi:E\to E'$ of $G$-equivariant complex vector bundles over $X$ such that $\Gamma\left(\psi\right)\circ \rho_{\mathfrak g_\mathbb C} = \rho'_{\mathfrak g_\mathbb C}$.
\item Composition is given by composition of morphisms of complex vector bundles over $X$.
\end{itemize}
\item The slice category $ \left(\mathfrak g_\mathbb C\right)_X \backslash \mathrm{Vect}_G^\mathbb C\left(X\right)$.
\end{enumerate}
\end{mainthm}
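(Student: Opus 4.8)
The plan is to mirror the proof of Theorem \ref{thm: A} step by step, replacing $TG$ by $T_\mathbb C G$, the Lie algebra $\mathfrak g$ by its complexification $\mathfrak g_\mathbb C$, and ``fibrewise $\mathbb R$-linear'' by ``fibrewise $\mathbb C$-linear'' throughout. First I would record the complex analogue of the material in section \ref{subsec: TG}: the bundle $T_\mathbb C G = TG\otimes_\mathbb R\mathbb C$ is a complex vector bundle over $G$ and, via the fibrewise complexification of $m_\ast$, a real Lie group whose multiplication is a morphism of complex vector bundles covering $m:G\times G\to G$. The same formula $v_g\bullet w_h = \left(L_g\right)_\ast w_h + \left(R_h\right)_\ast v_g$ holds, with $\left(L_g\right)_\ast$ and $\left(R_h\right)_\ast$ now $\mathbb C$-linear on fibres; one has $\left(T_\mathbb C G\right)_e\cong\mathfrak g_\mathbb C$; and the map $\left(\alpha,g\right)\mapsto\left(R_g\right)_\ast\alpha$ is an isomorphism of real Lie groups $\mathfrak g_\mathbb C\rtimes G\xrightarrow{\cong}T_\mathbb C G$, where $\mathfrak g_\mathbb C$ is viewed as an abelian real Lie group on which $G$ acts by the complexified adjoint representation $\mathrm{Ad}_\mathbb C$.

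With these facts in hand, I would re-run Lemma \ref{lemma: decomp} and Proposition \ref{prop: decomp} with the same arguments. The decomposition $\mu\left(v_g,e_x\right) = \mu\left(0_g,e_x\right) + \mu\left(\left(R_{g^{-1}}\right)_\ast v_g, 0_{g\cdot x}\right)$ follows from fibrewise $\mathbb C$-linearity of $\mu$ together with the identities $0_g\bullet 0_h = 0_{gh}$ and $v_g\bullet 0_{g^{-1}}\bullet 0_g = \left(R_{g^{-1}}\right)_\ast v_g$ in $T_\mathbb C G$. As before, a complex affine action $\mu$ then yields the pair $\left(\mu_G,\rho_{\mathfrak g_\mathbb C}\right)$ with $\mu_G\left(g,e_x\right) = \mu\left(0_g,e_x\right)$ (a complex $G$-vector bundle structure, since restriction along the Lie group homomorphism $0_{T_\mathbb C G}:G\to T_\mathbb C G$ preserves the property of being a morphism of complex vector bundles) and $\rho_{\mathfrak g_\mathbb C}\left(\alpha\right) = \left(x\mapsto\mu\left(\alpha,0_x\right)\right)$, which is $\mathbb C$-linear since $\mu$ is fibrewise $\mathbb C$-linear and $G$-equivariant for $\mathrm{Ad}_\mathbb C$ by the computation of Proposition \ref{prop: decomp}. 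Conversely, formula \eqref{eqn: reconstruct} reconstructs $\mu$ from such a pair, the verification that it defines a $T_\mathbb C G$-action being the same semidirect-product computation, now using $\left(\alpha,g\right)\bullet\left(\beta,h\right) = \left(\alpha+\mathrm{Ad}_{\mathbb C,g}\beta,gh\right)$.

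The remaining steps are the complex versions of Propositions \ref{prop: decomp2} and \ref{prop: decomp homs}. For the first, one applies the complex analogue of the bijection \eqref{eqn: hom(VX,E)}, namely $\mathrm{Hom}_{\mathrm{Vect}_G^{\mathbb C}\left(X\right)}\left(V_X,E\right)\cong\mathrm{Hom}_G\left(V,\Gamma\left(E\right)\right)$ for $V$ a complex $G$-representation, with $V=\mathfrak g_\mathbb C$; this identifies $G$-equivariant $\mathbb C$-linear maps $\rho_{\mathfrak g_\mathbb C}:\mathfrak g_\mathbb C\to\Gamma\left(E\right)$ with morphisms $\phi:\left(\mathfrak g_\mathbb C\right)_X\to E$ of complex $G$-vector bundles. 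For the second, the equivalence of (1), (2) and (3) is obtained, exactly as in Proposition \ref{prop: decomp homs}, by transporting the action along $T_\mathbb C G\cong\mathfrak g_\mathbb C\rtimes G$ and then specialising to $\alpha = 0$, respectively to $g=e$ and $e_x=0$. Theorem \ref{thm: A'} then follows by assembling these bijections on objects and on hom-sets and observing that composition in all three categories is composition of morphisms of complex vector bundles over $X$, just as in the proof of Theorem \ref{thm: A}.

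The only point that needs genuine care, and the one I would flag as the main obstacle, is the interaction between the two structures on $T_\mathbb C G$: it is merely a \emph{real} Lie group, yet in the definition of a complex affine action it must act by morphisms of \emph{complex} vector bundles. One must therefore check that the complex-linear structure on $\left(T_\mathbb C G\right)_e = \mathfrak g_\mathbb C$ is the one coming from complexification, and that the restricted maps $\mu\left(-,0_x\right):\mathfrak g_\mathbb C\to E_x$ and $\mu\left(0_g,-\right):E_x\to E_{g\cdot x}$ are $\mathbb C$-linear; both are immediate from the definition of a morphism of complex vector bundles covering $t$, but these are exactly the places where the complex hypothesis actually enters the argument.
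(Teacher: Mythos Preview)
Your proposal is correct and is exactly the approach the paper takes: the paper does not give a separate proof of Theorem \ref{thm: A'} but simply states that the complex analogues of Theorems \ref{thm: A}, \ref{thm: B} and \ref{thm: C} ``hold with essentially the same proofs.'' Your write-up spells out precisely what this means, following Lemma \ref{lemma: decomp} and Propositions \ref{prop: decomp}, \ref{prop: decomp2}, \ref{prop: decomp homs} with $T_\mathbb C G$, $\mathfrak g_\mathbb C$ and $\mathbb C$-linearity in place of their real counterparts, and your identification of the one genuinely new point---that $T_\mathbb C G$ is only a real Lie group but must act by morphisms of complex vector bundles---is apt.
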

\endgroup

\begingroup
\setcounter{mainthm}{1} 
\renewcommand\themainthm{\Alph{mainthm}$'$}
\begin{mainthm}
\label{thm: B'}
The following statements hold:
\begin{enumerate}
\item $\mathrm F^\mathbb C$ is left adjoint to $\mathrm U^\mathbb C$.
\item The adjunction $\mathrm F^\mathbb C \dashv \mathrm U^\mathbb C$  is monadic.
\item $\sigma^\mathbb C$ is the unique section of $\mathrm U^\mathbb C$.
\end{enumerate}
\end{mainthm}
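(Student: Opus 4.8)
The plan is to mimic the proof of Theorem~\ref{thm: B} with every real category replaced by its complex counterpart. First I would invoke Theorem~\ref{thm: A'} to identify $\mathrm{Vect}_{T_\mathbb C G}^{\mathrm{aff},\mathbb C}(X)$ with the slice category $(\mathfrak g_\mathbb C)_X \backslash \mathrm{Vect}_G^\mathbb C(X)$. Under this identification the functor $\mathrm U^\mathbb C$ becomes the standard forgetful functor $(\mathfrak g_\mathbb C)_X \backslash \mathrm{Vect}_G^\mathbb C(X) \to \mathrm{Vect}_G^\mathbb C(X)$, the functor $\mathrm F^\mathbb C$ becomes $E \mapsto \big((\mathfrak g_\mathbb C)_X \oplus E,\, i_{(\mathfrak g_\mathbb C)_X}\big)$, and $\sigma^\mathbb C$ becomes $E \mapsto (E,0)$; this is the exact analogue of Remark~\ref{rem: UforTG} and follows from the construction of the isomorphism in Theorem~\ref{thm: A'} in the same way as in the real case.

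Second, $\mathrm{Vect}_G^\mathbb C(X)$ is an additive category (recorded in Section~\ref{sec: prelim}), hence has finite coproducts, and for every object $E$ the coproduct $(\mathfrak g_\mathbb C)_X \amalg E$ is the direct sum $(\mathfrak g_\mathbb C)_X \oplus E$ with its canonical inclusions. Thus $\mathrm F^\mathbb C$ is precisely the functor $\mathcal F\colon a\mapsto (m\amalg a, i_m)$ of Lemma~\ref{lemma: D}, taken with $\mathscr C = \mathrm{Vect}_G^\mathbb C(X)$ and $m = (\mathfrak g_\mathbb C)_X$, and $\mathrm U^\mathbb C$ is the corresponding functor $\mathcal U$. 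Applying Lemma~\ref{lemma: D} immediately yields parts (1) and (2): $\mathrm F^\mathbb C \dashv \mathrm U^\mathbb C$, and this adjunction is monadic.

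For part (3) I would repeat the uniqueness argument from the proof of Theorem~\ref{thm: B} verbatim. That $\sigma^\mathbb C$ is a section of $\mathrm U^\mathbb C$ is immediate from the definitions. For uniqueness, any section $\sigma'$ must send $E$ to $(E,\phi)$ for some $\phi\colon (\mathfrak g_\mathbb C)_X \to E$; applying $\sigma'$ to the zero morphism $0\colon E\to E$ (available since $\mathrm{Vect}_G^\mathbb C(X)$ is additive) produces a morphism $0\colon (E,\phi)\to(E,\phi)$ in the slice category, forcing $\phi = 0\circ\phi = 0$, and the action on morphisms is then pinned down by the section property, so $\sigma' = \sigma^\mathbb C$.

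I do not anticipate a genuine obstacle: the argument is purely formal once Theorem~\ref{thm: A'} is available. The only point needing (routine) care is checking that under the isomorphism of Theorem~\ref{thm: A'} the functors $\mathrm U^\mathbb C, \mathrm F^\mathbb C, \sigma^\mathbb C$ match $\mathcal U, \mathcal F$ and the zero section exactly as claimed, which in turn rests on the complex analogues of Propositions~\ref{prop: decomp}--\ref{prop: decomp homs}; these hold because $T_\mathbb C G \cong \mathfrak g_\mathbb C \rtimes G$ as real Lie groups and $T_\mathbb C G \cong G\times\mathfrak g_\mathbb C$ as complex vector bundles, in exact parallel with Section~\ref{subsec: TG}.
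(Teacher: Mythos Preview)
Your proposal is correct and follows essentially the same approach as the paper: the paper states that Theorems~\ref{thm: A'}--\ref{thm: C'} ``hold with essentially the same proofs'' as their real counterparts, and the proof of Theorem~\ref{thm: B} proceeds exactly as you describe---identifying the affine-action category with the slice category, invoking Lemma~\ref{lemma: D} for parts (1) and (2) using that $\mathrm{Vect}_G^\mathbb C(X)$ is additive, and running the zero-morphism argument for part (3). You have simply spelled out what ``essentially the same proof'' means in the complex case.
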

\endgroup

\begingroup
\setcounter{mainthm}{2} 
\renewcommand\themainthm{\Alph{mainthm}$'$}
\begin{mainthm}
\label{thm: C'}
If $X$ is a $G$-manifold then the functor $\mathrm U^\mathbb C$ induces a group isomorphism
\[
K\left(\mathrm U^\mathbb C\right):K_{T_\mathbb C G}^{\mathrm{aff},\mathbb C}\left(X\right) \to K_G\left(X\right)
\]
Its inverse is 
\[
K\left(\sigma^\mathbb C\right) : K_G\left(X\right) \to K_{T_\mathbb CG}^{\mathrm{aff},\mathbb C}\left(X\right)
\]
These isomorphisms are natural in $X$, and thus define an isomorphism of functors
\[
K_{T_\mathbb CG}^{\mathrm{aff},\mathbb C}\left(-\right) \xrightarrow{\cong} K_G\left(-\right)
\]
\end{mainthm}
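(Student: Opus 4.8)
The plan is to run the proof of Theorem \ref{thm: C} with the substitution $\mathbb R \rightsquigarrow \mathbb C$, $\mathfrak g \rightsquigarrow \mathfrak g_\mathbb C$ throughout, since every ingredient of that argument has an immediate complex analogue. The three inputs I would assemble are: the complex isomorphism of categories (Theorem \ref{thm: A'}), the additivity of $\mathrm{Vect}_G^\mathbb C\left(X\right)$ recorded in section \ref{sec: prelim}, and Lemma \ref{lemma: E}.

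First I would use Theorem \ref{thm: A'} to identify $\mathrm{Vect}_{T_\mathbb C G}^{\mathrm{aff},\mathbb C}\left(X\right)$ with the slice category $\left(\mathfrak g_\mathbb C\right)_X \backslash \mathrm{Vect}_G^\mathbb C\left(X\right)$. Under this identification, exactly as in Remark \ref{rem: UforTG}, the functor $\mathrm U^\mathbb C$ becomes the standard forgetful functor $\mathcal U$ and $\sigma^\mathbb C$ becomes the section $\mathcal S: E \mapsto \left(E,0\right)$. Since $\mathrm{Vect}_G^\mathbb C\left(X\right)$ is additive, Lemma \ref{lemma: E} applies with $\mathscr C = \mathrm{Vect}_G^\mathbb C\left(X\right)$ and $m = \left(\mathfrak g_\mathbb C\right)_X$, and yields directly that $K\left(\mathrm U^\mathbb C\right): K_{T_\mathbb C G}^{\mathrm{aff},\mathbb C}\left(X\right) \to K_G\left(X\right)$ is an isomorphism with inverse $K\left(\sigma^\mathbb C\right)$. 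This settles the statement for fixed $X$; here one uses that $\mathcal U$ and $\mathcal S$ preserve products, as in section \ref{subsec: products}, so that they induce homomorphisms of Grothendieck groups.

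For naturality in $X$, I would first note that a $G$-map $f: X \to Y$ induces a pullback functor $\tilde f^\ast: \left(\mathfrak g_\mathbb C\right)_Y \backslash \mathrm{Vect}_G^\mathbb C\left(Y\right) \to \left(\mathfrak g_\mathbb C\right)_X \backslash \mathrm{Vect}_G^\mathbb C\left(X\right)$ via the canonical isomorphism $f^\ast \left(\mathfrak g_\mathbb C\right)_Y \cong \left(\mathfrak g_\mathbb C\right)_X$, exactly as in Definition \ref{def: pullback}, and that this functor preserves products by the argument of Proposition \ref{prop: tilde f products}. The complex analogue of Proposition \ref{prop: naturality of functors} — in particular the natural isomorphisms $\mathrm U^\mathbb C_X \tilde f^\ast \cong f^\ast \mathrm U^\mathbb C_Y$ and $\sigma^\mathbb C_X f^\ast \cong \tilde f^\ast \sigma^\mathbb C_Y$ — then follows from the definitions. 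Applying $K\left(-\right)$ and invoking the functoriality of the Grothendieck construction produces commuting squares, so the isomorphisms $K\left(\mathrm U^\mathbb C\right)$ are natural and assemble into an isomorphism of functors $K_{T_\mathbb C G}^{\mathrm{aff},\mathbb C}\left(-\right) \xrightarrow{\cong} K_G\left(-\right)$.

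I do not expect a genuine obstacle: the substance is all in Theorem \ref{thm: A'} and Lemma \ref{lemma: E}, and Lemma \ref{lemma: E} is already stated for an arbitrary additive category, so the only point deserving a moment's care is to confirm that its proof — the square \eqref{diag: prodtriangle} and the isomorphism $h = \left(\mathrm{pr}_1, \mathrm{pr}_2 - \mathrm{pr}_1\right)$ — uses nothing beyond the additive structure, which it does not. Thus the proof of Theorem \ref{thm: C'} is word-for-word that of Theorem \ref{thm: C}, applied to $\mathrm{Vect}_G^\mathbb C\left(X\right)$ in place of $\mathrm{Vect}_G\left(X\right)$.
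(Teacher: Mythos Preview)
Your proposal is correct and matches the paper's approach exactly: the paper gives no separate proof of Theorem \ref{thm: C'}, stating only that it holds ``with essentially the same proof'' as Theorem \ref{thm: C}, and your write-up spells out precisely that argument --- Lemma \ref{lemma: E} applied to the additive category $\mathrm{Vect}_G^\mathbb C\left(X\right)$ via Theorem \ref{thm: A'}, with naturality from the complex analogue of Proposition \ref{prop: naturality of functors}.
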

\endgroup



James Waldron, School of Mathematics and Statistics, Newcastle
University, Newcastle upon Tyne NE1 7RU, UK.

\textit{Email address}: \href{mailto:james.waldron@newcastle.ac.uk}{james.waldron@newcastle.ac.uk}


\end{document}